\documentclass[11pt]{article}
\usepackage{amssymb}
\usepackage{amsfonts}
\usepackage{amsmath}

\setcounter{MaxMatrixCols}{10}
\setlength{\oddsidemargin}{-0.05in}
\setlength{\evensidemargin}{-0.05in}
\setlength{\textwidth}{6.5in}
\newtheorem{theorem}{Theorem}[section]
\newtheorem{acknowledgement}[theorem]{Acknowledgement}
\textheight=8.5in

\newtheorem{definition}[theorem]{Definition}

\newtheorem{lemma}[theorem]{Lemma}

\newtheorem{proposition}[theorem]{Proposition}
\newtheorem{remark}[theorem]{Remark}

\newenvironment{proof}[1][Proof]{\textbf{#1.} }{\hfill\rule{0.5em}{0.5em}}
{\catcode`\@=11\global\let\AddToReset=\@addtoreset
\AddToReset{equation}{section}

\AddToReset{theorem}{section}

\begin{document}

\title{Remarks on some quasilinear equations with gradient terms and measure
data}
\author{Marie-Fran\c{c}oise BIDAUT-VERON\thanks{%
Laboratoire de Math\'{e}matiques et Physique Th\'{e}orique, CNRS UMR 7350,
Facult\'{e} des Sciences, 37200 Tours France. E-mail: veronmf@univ-tours.fr}
\and Marta GARCIA-HUIDOBRO\thanks{%
Departamento de Matematicas, Pontifica Universidad Catolica de Chile,
Casilla 307, Correo 2, Santiago de Chile. E-mail: mgarcia@mat.puc.cl} \and %
Laurent VERON\thanks{%
Laboratoire de Math\'{e}matiques et Physique Th\'{e}orique, CNRS UMR 7350,
Facult\'{e} des Sciences, 37200 Tours France. E-mail: veronl@univ-tours.fr}}
\date{.}
\maketitle

\begin{abstract}
Let $\Omega \subset \mathbb{R}^{N}$ be a smooth bounded domain, $H$ a
Caratheodory function defined in $\Omega \times \mathbb{R\times R}^{N},$ and
$\mu $ a bounded Radon measure in $\Omega .$ We study the problem%
\begin{equation*}
-\Delta _{p}u+H(x,u,\nabla u)=\mu \quad \text{in }\Omega ,\qquad u=0\quad
\text{on }\partial \Omega ,
\end{equation*}
where $\Delta _{p}$ is the $p$-Laplacian ($p>1$)$,$ and we emphasize the
case $H(x,u,\nabla u)=\pm \left\vert \nabla u\right\vert ^{q}$ ($q>0$). We
obtain an existence result under subcritical growth assumptions on $H,$ we
give necessary conditions of existence in terms of capacity properties, and
we prove removability results of eventual singularities. In the
supercritical case, when $\mu \geqq 0$ and $H$ is an absorption term, i.e. $%
H\geqq 0,$ we give two sufficient conditions for existence of a nonnegative
solution.\bigskip \bigskip \bigskip \bigskip \bigskip \bigskip
\end{abstract}

\pagebreak \medskip

\section{Introduction\protect\bigskip}

Let $\Omega $ be a smooth bounded domain in $\mathbb{R}^{N}(N\geqq 2)$. In
this article we consider problems of the form%
\begin{equation}
-\Delta _{p}u+H(x,u,\nabla u)=\mu \hspace{0.5cm}\text{in }\Omega ,
\label{PG}
\end{equation}%
where $\Delta _{p}u= {div}(\left\vert \nabla u\right\vert ^{p-2}\nabla
u) $ is the $p$-Laplace operator, with $1<p\leqq N,$ $H$ is a Caratheodory
function defined in $\Omega \times \mathbb{R\times R}^{N}$, and $\mu $ is a
possibly signed Radon measure on $\Omega .$ We study the existence of
solutions for the Dirichlet problem in $\Omega $
\begin{equation}
-\Delta _{p}u+H(x,u,\nabla u)=\mu \hspace{0.5cm}\text{in }\Omega ,\qquad u=0%
\hspace{0.5cm}\text{on }\partial \Omega ,  \label{P0}
\end{equation}%
and some questions of removability of the singularities. Our main motivation
is the case where $\mu $ is nonnegative, $H$ involves only $\nabla u$, and
either $H$ is nonnegative, hence $H$ is an absorption term, or $H$ is
nonpositive, hence $H$ is a source one. The model cases are
\begin{equation}
-\Delta _{p}u+\left\vert \nabla u\right\vert ^{q}=\mu \hspace{0.5cm}\text{in
}\Omega ,  \label{abs}
\end{equation}%
where $q>0,$ for the absorption case and
\begin{equation}
-\Delta _{p}u=\left\vert \nabla u\right\vert ^{q}+\mu \hspace{0.5cm}\text{in
}\Omega .  \label{sou}
\end{equation}%
for the source case. \medskip

The equations without gradient terms,
\begin{equation}
-\Delta _{p}u+H(x,u)=\mu \hspace{0.5cm}\text{in }\Omega ,  \label{pbu}
\end{equation}%
such as the quasilinear Emden-Fowler equations
\begin{equation*}
-\Delta _{p}u\pm \left\vert u\right\vert ^{Q-1}u=\mu \hspace{0.5cm}\text{in }%
\Omega ,
\end{equation*}%
where $Q>0,$ have been the object of a huge literature when $p=2.$ In the
general case $p>1,$ among many works we refer to \cite{Be}, \cite{Bi1}, \cite%
{Bi2} and the references therein, and to \cite{BiHuVe} for new recent
results in the case of absorption.

We set
\begin{equation}
Q_{c}=\frac{N(p-1)}{N-p},\qquad q_{c}=\frac{N(p-1)}{N-1},\qquad
(Q_{c}=\infty \text{ if }p=N),\qquad \tilde{q}=p-1+\frac{p}{N}  \label{defi}
\end{equation}%
(hence $q_{c},\tilde{q}<p<N$ or $q_{c}=\tilde{q}=p=N$), and
\begin{equation}
q_{\ast }=\frac{q}{q+1-p},  \label{qch}
\end{equation}%
(thus $q_{\ast }=q^{\prime }$ in case $p=2).\medskip $

In Section 2 we recall the main notions of solutions of the problem $-\Delta
_{p}u=\mu $, such as weak solutions, renormalized or locally renormalized
solutions, and convergence results. In Section 3 we prove a general
existence result for problem (\ref{P0}) in the subcritical case, see Theorem %
\ref{subc}. Then in Section 4 we give necessary conditions for existence and
removability results for the local solutions of problem (\ref{PG}),
extending former results of \cite{HaMaVe} and \cite{Ph}, see Theorem \ref%
{remo}. In Section 5 we study the problem (\ref{P0}) in the supercritical
case, where many questions are still open. We give two partial results of
existence in Theorems \ref{one} and \ref{two}. Finally in Section 5 we make
some remarks of regularity for the problem
\begin{equation*}
-\Delta _{p}u+H(x,u,\nabla u)=0\hspace{0.5cm}\text{in }\Omega .
\end{equation*}

\section{Notions of solutions \label{Reg}}

Let $\omega $ be any domain of $\mathbb{R}^{N}.$ For any $r>1,$ the capacity
$cap_{1,r}$ associated to $W_{0}^{1,r}(\omega )$ is defined by \medskip
\begin{equation*}
cap_{1,r}(K,\omega )=\inf \left\{ \left\Vert \psi \right\Vert
_{W_{0}^{1,r}(\omega )}^{r}:\psi \in \mathcal{D}(\omega ),\chi _{K}\leq \psi
\leq 1\right\} ,
\end{equation*}%
for any compact set $K\subset \omega ,$ and then the notion is extended to
any Borel set in $\omega .$ In $\mathbb{R}^{N}$ we denote by $G_{1}$ the
Bessel kernel of order 1 (defined by $\widehat{G_{1}}(y)=(1+\left\vert
y\right\vert ^{2})^{-1/2}),$ and we consider the Bessel capacity defined for
any compact $K\subset \mathbb{R}^{N}$ by
\begin{equation*}
Cap_{1,r}(K,\mathbb{R}^{N})=\inf \left\{ \left\Vert f\right\Vert _{L^{r}(%
\mathbb{R}^{N})}^{r}:f\geqq 0,G_{1}\ast f\geqq \chi _{K}\right\} .
\end{equation*}%
On $\mathbb{R}^{N}$ the two capacities are equivalent, see \cite{AdPo}.
\medskip

We denote by $\mathcal{M}(\omega )$ the set of Radon measures in $\omega ,$
and $\mathcal{M}_{b}(\omega )$ the subset of bounded measures, and define $%
\mathcal{M}^{+}(\omega ),$ $\mathcal{M}_{b}^{+}(\omega )$ the corresponding
cones of nonnegative measures. Any measure $\mu \in \mathcal{M}(\omega )$
admits a positive and a negative parts, denoted by $\mu ^{+}$ and $\mu ^{-}$%
. For any Borel set $E$, $\mu \llcorner E$ is the restriction of $\mu $ to $%
E;$ we say that $\mu $ is concentrated on $E$ if $\mu =\mu \llcorner E.$%
\medskip

For any $r>1,$ we call $\mathcal{M}^{r}(\omega )$ the set of measures $\mu
\in \mathcal{M}(\omega )$ which do not charge the sets of null capacity,
that means $\mu (E)=0$ for every Borel set $E\subset \omega $ with $%
cap_{1,r}(E,\omega )=0$. Any measure concentrated on a set $E$ with $%
cap_{1,r}(E,\omega )=0$ is called $r$-singular. Similarly we define the
subsets $\mathcal{M}_{b}^{r}(\omega )$ and $\mathcal{M}_{b}^{r+}(\omega )$%
.\medskip

For fixed $r>1,$ any measure $\mu \in \mathcal{M}(\omega )$ admits a unique
decomposition of the form $\mu =\mu _{0}+\mu _{s},$ where $\mu _{0}\in
\mathcal{M}^{r}(\omega )$, and $\mu _{s}=\mu _{s}^{+}-\mu _{s}^{-}$ is $r$%
-singular. If $\mu \geqq 0,$ then $\mu _{0}\geqq 0$ and $\mu _{s}\geqq 0$.

\begin{remark}
\label{imp}Any measure $\mu \in \mathcal{M}_{b}(\omega )$ belongs to $%
\mathcal{M}^{r}(\omega )$ if and only if there exist $f\in L^{1}(\omega )$
and $g\in (L^{r^{\prime }}(\omega ))^{N}$ such that $\mu =f+ {div}g,$
see \cite[Theorem 2.1]{BoGaOr}. However this decomposition is not unique; if
$\mu $ is nonnegative there exists a decomposition such that $f$ is
nonnegative, but one cannot ensure that $ {div}g$ is nonnegative.
\end{remark}

For any $k>0$ and $s\in \mathbb{R},$ we define the truncation $T_{k}(s)=\max
(-k,\min (k,s)).$ If $u$ is measurable and finite a.e. in $\omega $, and $%
T_{k}(u)$ belongs to $W_{0}^{1,p}(\omega )$ for every $k>0$, one can define
the gradient $\nabla u$ a.e. in $\omega $ by $\nabla T_{k}(u)=\nabla u.\chi
_{\left\{ \left\vert u\right\vert \leqq k\right\} }$ for any $k>0.$\medskip

For any $f\in \mathcal{M}^{+}\left( \mathbb{R}^{N}\right) ,$ we denote the
Bessel potential of $f$ by $J_{1}(f)=G_{1}\ast f.$

\subsection{Renormalized solutions}

Let $\mu \in \mathcal{M}_{b}(\Omega ).$ Let us recall some known results for
the problem%
\begin{equation}
-\Delta _{p}u=\mu \hspace{0.5cm}\text{in }\Omega ,\qquad u=0\quad \text{on }%
\partial \Omega ,  \label{mu}
\end{equation}

Under the assumption $p>2-1/N,$ from \cite{BoGa}, problem (\ref{mu}) admits
a solution $u\in W_{0}^{1,r}(\Omega )$ for every $r\in \left[ 1,q_{c}\right)
,$ satisfying the equation in $\mathcal{D}^{\prime }\left( \Omega \right) .$
When $p<2-1/N$, then $q_{c}<1;$ this leads to introduce the concept of
renormalized solutions developed in \cite{DMOP}, see also \cite{Mae}, \cite%
{TruWa}. Here we recall one of their definitions, among four equivalent ones
given in \cite{DMOP}.

\begin{definition}
\label{nor}Let $\mu =\mu ^{0}+\mu _{s}\in \mathcal{M}_{b}(\Omega )$, where $%
\mu ^{0}\in \mathcal{M}^{p}(\Omega )$ and $\mu _{s}=\mu _{s}^{+}-\mu
_{s}^{-} $ is $p$-singular. A function $u$ is a \textbf{renormalized
solution, }called\textbf{\ R-solution} of problem (\ref{mu}), if $u$ is
measurable and finite a.e. in $\Omega $, such that $T_{k}(u)$ belongs to $%
W_{0}^{1,p}(\Omega )$ for any $k>0,$ and $\left\vert \nabla u\right\vert
^{p-1}{\in }L^{\tau }(\Omega ),$ {for any }$\tau \in \left[ 1,N/(N-1)\right)
;$ and for any $h\in W^{1,\infty }(\mathbb{R})$ such that $h^{\prime }$ has
a compact support, and any $\varphi \in W^{1,s}(\Omega )$ for some $s>N,$
such that $h(u)\varphi \in W_{0}^{1,p}(\Omega ),$%
\begin{equation}
\int_{\Omega }\left\vert \nabla u\right\vert ^{p-2}\nabla u.\nabla
(h(u)\varphi )dx=\int_{\Omega }h(u)\varphi d\mu _{0}+h(\infty )\int_{\Omega
}\varphi d\mu _{s}^{+}-h(-\infty )\int_{\Omega }\varphi d\mu _{s}^{-}.
\label{norr}
\end{equation}%
\medskip
\end{definition}

As a consequence, any R-solution $u$ of problem (\ref{mu}) satisfies $%
\left\vert u\right\vert ^{p-1}\in L^{\sigma }(\Omega ),\forall \sigma \in %
\left[ 1,N/(N-p\right) .$ More precisely, $u$ and $\left\vert \nabla
u\right\vert $ belong to some Marcinkiewicz spaces
\begin{equation*}
L^{s,\infty }(\Omega )=\left\{ u\text{ measurable in }\Omega
:\sup_{k>0}k^{s}\left\vert \left\{ x\in \Omega :\left\vert u(x)\right\vert
>k\right\} \right\vert <\infty \right\} ,
\end{equation*}
see \cite{BoGa}, \cite{Be}, \cite{DMOP}, \cite{Kil}, and one gets useful
convergence properties, see \cite[Theorem 4.1 and \S 5]{DMOP} for the proof:

\begin{lemma}
\label{ldmop} (i) Let $\mu \in \mathcal{M}_{b}(\Omega )$ and $u$ be any
R-solution of problem (\ref{mu}). Then for any $k>0,$
\begin{equation*}
\frac{1}{k}\int_{\left\{ m\leqq u\leqq m+k\right\} }\left\vert \nabla
u\right\vert ^{p}dx\leq \left\vert \mu \right\vert (\Omega ),\forall m\geqq
0.
\end{equation*}

If $p<N$, then $u\in L^{Q_{c},\infty }(\Omega )$ and $\left\vert \nabla
u\right\vert \in L^{q_{c},\infty }(\Omega )$,
\begin{equation}
\left\vert \left\{ \left\vert u\right\vert \geqq k\right\} \right\vert \leqq
C(N,p)k^{-Q_{c}}(\left\vert \mu \right\vert (\Omega ))^{\frac{N}{N-p}%
},\qquad \left\vert \left\{ \left\vert \nabla u\right\vert \geqq k\right\}
\right\vert \leqq C(N,p)k^{-q_{c}}(\left\vert \mu \right\vert (\Omega ))^{^{%
\frac{N}{N-1}}}.  \label{sti}
\end{equation}

If $p=N$ (where $u$ is unique), then for any $r>1$ and $s\in \left(
1,N\right) ,$
\begin{equation}
\left\vert \left\{ \left\vert u\right\vert \geqq k\right\} \right\vert \leqq
C(N,p,r)k^{-r}(\left\vert \mu \right\vert (\Omega ))^{^{\frac{r}{p-1}%
}},\qquad \left\vert \left\{ \left\vert \nabla u\right\vert \geqq k\right\}
\right\vert \leqq C(N,p,s)k^{-N}(\left\vert \mu \right\vert (\Omega ))^{%
\frac{s}{N-1}}.  \label{sto}
\end{equation}%
(ii) Let $\left( \mu _{n}\right) $ be a sequence of measures $\mu _{n}\in
\mathcal{M}_{b}(\Omega ),$ uniformly bounded in $\mathcal{M}_{b}(\Omega ),$
and $u_{n}$ be any R-solution of
\begin{equation*}
-\Delta _{p}u_{n}=\mu _{n}\hspace{0.5cm}\text{in }\Omega ,\qquad
u_{n}=0\quad \text{on }\partial \Omega .
\end{equation*}%
Then there exists a subsequence $\left( \mu _{\nu }\right) $ such that $%
\left( u_{\nu }\right) $ converges $a.e.$ in $\Omega $ to a function $u,$
such that $T_{k}\left( u\right) \in W_{0}^{1,p}(\Omega ),$ and $(T_{k}\left(
u_{\nu }\right) )$ converges weakly in $W_{0}^{1,p}(\Omega )$ to $T_{k}(u),$
and $\left( \nabla u_{\nu }\right) $ converges $a.e.$ in $\Omega $ to $%
\nabla u.$
\end{lemma}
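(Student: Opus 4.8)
\emph{Remark on the proof.} This is the stability theory of \cite{DMOP}; we only indicate the scheme.

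\textbf{Step 1: the energy estimate in the bands.} Fix $m\geqq 0$ and $k>0$. In \eqref{norr} one wants to take $\varphi =\tfrac1k T_{k}((u-m)^{+})$ and $h=h_{n}$, where $h_{n}$ is the piecewise affine cutoff equal to $1$ on $[-n,n]$, vanishing outside $[-n-1,n+1]$, so that $h_{n}(\pm\infty)=0$; since $\varphi$ is not in $W^{1,s}(\Omega)$ with $s>N$ one first mollifies, or equivalently uses one of the other equivalent characterisations of R-solution from \cite{DMOP} in which truncations are admissible test functions. The left-hand side of \eqref{norr} then produces $\tfrac1k\int_{\{m\leqq u\leqq m+k\}}|\nabla u|^{p}dx$ plus the remainder $\int_{\Omega}h_{n}'(u)|\nabla u|^{p}\varphi\,dx$, supported in $\{n\leqq |u|\leqq n+1\}$, which tends to $0$ as $n\to\infty$ by the energy–decay property of R-solutions; on the right-hand side $|\int h_{n}(u)\varphi\,d\mu_{0}|\leqq |\mu|(\Omega)$ because $0\leqq\varphi\leqq 1$, and the singular terms carry the factors $h_{n}(\pm\infty)=0$. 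Passing to the limit in $n$ gives $\tfrac1k\int_{\{m\leqq u\leqq m+k\}}|\nabla u|^{p}dx\leqq |\mu|(\Omega)$; applied with $m=0$ to $u$ and to $-u$ it yields $\|\nabla T_{k}(u)\|_{L^{p}(\Omega)}^{p}\leqq 2k\,|\mu|(\Omega)$.

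\textbf{Step 2: the Marcinkiewicz estimates.} Assume $p<N$. The Sobolev inequality and Step 1 give $\|T_{k}(u)\|_{L^{p^{\ast}}(\Omega)}^{p}\leqq C\|\nabla T_{k}(u)\|_{L^{p}(\Omega)}^{p}\leqq Ck\,|\mu|(\Omega)$ with $p^{\ast}=Np/(N-p)$, and since $k^{p^{\ast}}|\{|u|\geqq k\}|\leqq \|T_{k}(u)\|_{L^{p^{\ast}}(\Omega)}^{p^{\ast}}$ one obtains, using $p^{\ast}(1-1/p)=Q_{c}$ and $p^{\ast}/p=N/(N-p)$, the first estimate in \eqref{sti}. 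For the gradient, write $\{|\nabla u|\geqq k\}\subset\{|\nabla u|\geqq k,\ |u|<j\}\cup\{|u|\geqq j\}$; the first set has measure $\leqq k^{-p}\int_{\{|u|<j\}}|\nabla u|^{p}dx\leqq k^{-p}j\,|\mu|(\Omega)$ and the second is controlled by the estimate just proved, and optimising in $j$ (here $Q_{c}+1=p(N-1)/(N-p)$, so $j\sim k^{(N-p)/(N-1)}(|\mu|(\Omega))^{1/(N-1)}$) produces the exponents $k^{-q_{c}}$ and $(|\mu|(\Omega))^{N/(N-1)}$, that is the second estimate in \eqref{sti}. When $p=N$, the embedding $W_{0}^{1,N}(\Omega)\hookrightarrow L^{r}(\Omega)$ for every finite $r$ gives the whole family \eqref{sto} with arbitrary $r$ (and $s<N$ for the gradient), and uniqueness of $u$ is the classical fact for $p=N$.

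\textbf{Step 3: compactness.} The bounds of Step 1--2 are uniform in $n$: $\sup_{n}|\{|u_{n}|\geqq k\}|\leqq Ck^{-Q_{c}}(\sup_{n}|\mu_{n}|(\Omega))^{N/(N-p)}$ and, for each fixed $k$, $\sup_{n}\|\nabla T_{k}(u_{n})\|_{L^{p}(\Omega)}<\infty$. By a diagonal argument over $k\in\mathbb{N}$ we extract a subsequence $(u_{\nu})$ with $T_{k}(u_{\nu})\rightharpoonup v_{k}$ weakly in $W_{0}^{1,p}(\Omega)$ and, by Rellich, strongly in $L^{p}(\Omega)$ and a.e.; the uniform tail estimate shows $v_{k}=T_{k}(u)$ for a measurable a.e. finite function $u$, with $u_{\nu}\to u$ a.e. and $T_{k}(u)\in W_{0}^{1,p}(\Omega)$. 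The delicate point is the a.e. convergence of the full gradients: testing the equation for $u_{\nu}$ with a cutoff of $T_{\delta}(T_{k}(u_{\nu})-T_{k}(u))$ and using the monotonicity inequality for $-\Delta_{p}$, together with the weak convergence of $T_{k}(u_{\nu})$ and the equi-integrability of the energies $\int_{\{n\leqq |u_{\nu}|\leqq n+1\}}|\nabla u_{\nu}|^{p}dx$ (needed to absorb the part of $\mu_{\nu}$ that is not in $W^{-1,p'}(\Omega)$ and the singular part), one shows that
\begin{equation*}
\int_{\Omega}\Big(|\nabla T_{k}(u_{\nu})|^{p-2}\nabla T_{k}(u_{\nu})-|\nabla T_{k}(u)|^{p-2}\nabla T_{k}(u)\Big)\cdot\nabla\big(T_{k}(u_{\nu})-T_{k}(u)\big)\,dx\longrightarrow 0,
\end{equation*}
hence $\nabla T_{k}(u_{\nu})\to\nabla T_{k}(u)$ in measure on $\{|u|<k\}$ and, along a further subsequence, a.e.; since $k$ is arbitrary and $\nabla u=\nabla T_{k}(u)$ on $\{|u|<k\}$, this gives $\nabla u_{\nu}\to\nabla u$ a.e. in $\Omega$. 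This last step—carrying the Boccardo--Murat monotonicity argument through in the presence of the diffuse and singular components of the $\mu_{\nu}$—is the technical heart of the argument and is carried out in \cite[\S 5]{DMOP}.
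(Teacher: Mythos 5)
Your scheme is the standard one from \cite{DMOP} (and \cite{Be}), which is exactly what the paper does: it offers no proof of this lemma at all, simply quoting \cite[Theorem 4.1 and \S 5]{DMOP}, so your outline and the paper's approach coincide. Steps 2 and 3 are correct as sketched (the exponent bookkeeping in the optimisation over $j$ checks out, and deferring the Boccardo--Murat monotonicity argument with diffuse and singular components to \cite[\S 5]{DMOP} is legitimate, since that is precisely where it is carried out). One correction in Step 1: the remainder $\int_{\Omega}h_{n}'(u)|\nabla u|^{p}\varphi\,dx$, supported in $\{n\leqq u\leqq n+1\}$, does \emph{not} tend to $0$; for an R-solution one has $\int_{\{n\leqq u\leqq n+1\}}|\nabla u|^{p}\psi\,dx\to\int_{\Omega}\psi\,d\mu_{s}^{+}$ (this is one of the equivalent characterisations in \cite{DMOP}), and it is exactly this limit that injects the singular mass into the right-hand side. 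If the remainder really vanished, your argument would deliver the bound $\frac1k\int_{\{m\leqq u\leqq m+k\}}|\nabla u|^{p}dx\leqq|\mu_{0}|(\Omega)$, which is false for a purely singular measure such as $\mu=\delta_{x_{0}}$. The correct accounting gives $\leqq|\mu_{0}|(\Omega)+\mu_{s}^{+}(\Omega)\leqq|\mu|(\Omega)$, so the stated conclusion survives, but the reason is the convergence of the band energies to the singular mass, not their decay.
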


\begin{remark}
These properties do not require any regularity of $\Omega .$ If $\mathbb{R}%
^{N}\backslash \Omega $ is geometrically dense, i.e. there exists $c>0$ such
that $\left\vert B(x,r)\backslash \Omega \right\vert \geqq cr^{N}$ for any $%
x\in \mathbb{R}^{N}\backslash \Omega $ and $r>0,$ then (\ref{sto}) holds
with $s=N,$ and $C$ depends also on the geometry of $\Omega .$ Then $%
\left\vert \nabla u\right\vert \in L^{N,\infty }(\Omega ),$ hence $u\in
BMO(\Omega ),$ see \cite{DoHuMu}, \cite{Kil}. \medskip
\end{remark}

Next we recall the fundamental stability result of \cite[Theorem 3.1]{DMOP}:

\begin{definition}
For any measure $\mu =\mu ^{0}+\mu _{s}^{+}-\mu _{s}^{-}\in \mathcal{M}%
_{b}(\Omega ),$ where $\mu ^{0}=f- {div}g\in \mathcal{M}^{p}(\Omega ),$
and \ $\mu _{s}^{+},\mu _{s}^{-}$ are $p$-singular we say that a sequence $%
\left( \mu _{n}\right) $ is a \textbf{good approximation }of $\mu $ in $%
\mathcal{M}_{b}(\Omega )$ if it can be decomposed as
\begin{equation}
\mu _{n}=\mu _{n}^{0}+\lambda _{n}-\eta _{n},\quad \text{with}\quad \text{ }%
\mu _{n}^{0}=f_{n}- {div}g_{n},\quad \text{ }f_{n}\in L^{1}(\Omega
),\quad g_{n}\in (L^{p^{\prime }}(\Omega ))^{N},\quad \lambda _{n},\eta
_{n}\in \mathcal{M}_{b}^{+}(\Omega ),  \label{dol}
\end{equation}%
such that $(f_{n})$ converges to $f$ weakly in $L^{1}(\Omega ),$ $(g_{n})$
converges to $g$ strongly in $(L^{p^{\prime }}(\Omega ))^{N}$ and $( {div%
}g_{n})$ is bounded in $\mathcal{M}_{b}(\Omega ),$ and $(\rho _{n})$
converges to $\mu _{s}^{+}$ and $(\eta _{n})$ converges to $\mu _{s}^{-}$ in
the narrow topology.
\end{definition}

\begin{theorem}[\protect\cite{DMOP}]
\label{fund}Let $\mu \in \mathcal{M}_{b}(\Omega )$, and let $\left( \mu
_{n}\right) $ be a good approximation of $\mu $. Let $u_{n}$ be a R-solution
of
\begin{equation*}
-\Delta _{p}u_{n}=\mu _{n}\hspace{0.5cm}\text{in }\Omega ,\qquad
u_{n}=0\quad \text{on }\partial \Omega .
\end{equation*}%
Then there exists a subsequence $(u_{\nu })$ converging a.e. in $\Omega $ to
a R-solution $u$ of problem (\ref{mu}). And $(T_{k}(u_{\nu }))$ converges to
$T_{k}(u)$ strongly in $W_{0}^{1,p}(\Omega ).$
\end{theorem}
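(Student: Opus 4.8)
The statement is the Dal Maso--Murat--Orsina--Prignet stability theorem, so the plan is to reproduce the structure of its proof: a compactness extraction based on the \emph{a priori} bounds of Lemma \ref{ldmop}, a fine analysis of the Dirichlet energy carried where $|u_n|$ is large, a monotonicity argument turning the weak convergence of the truncations into a strong one, and a passage to the limit in the renormalized identity (\ref{norr}). \textbf{Step 1 (compactness).} Since $(\mu_n)$ is a good approximation of $\mu$, the masses $\|\mu_n\|_{\mathcal M_b(\Omega)}$ are bounded, so Lemma \ref{ldmop}(ii) provides a subsequence, still denoted $(u_\nu)$, with $u_\nu\to u$ a.e.\ in $\Omega$, $T_k(u_\nu)\rightharpoonup T_k(u)$ weakly in $W_0^{1,p}(\Omega)$ for every $k>0$, and $\nabla u_\nu\to\nabla u$ a.e.\ in $\Omega$. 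The Marcinkiewicz bounds (\ref{sti})--(\ref{sto}) make $(|\nabla u_\nu|^{p-1})$ equi-integrable and bounded in every $L^{\tau}(\Omega)$, $\tau<N/(N-1)$; by Vitali's theorem $|\nabla u_\nu|^{p-2}\nabla u_\nu\to|\nabla u|^{p-2}\nabla u$ strongly in $(L^{\tau}(\Omega))^{N}$ for such $\tau$. Thus the limit $u$ already satisfies the structural requirements of Definition \ref{nor}, and it remains to upgrade the truncation convergence and to pass to the limit in (\ref{norr}).

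\textbf{Step 2 (energy near infinity).} This is the technical core. Fixing a cutoff $\varphi\in\mathcal D(\Omega)$, $0\le\varphi\le1$, and testing the equation for $u_n$ against functions that approximate $\chi_{\{s>k\}}\varphi$ (and their negative-level analogues), one uses the $L^1$-equi-integrability of $(f_n)$ and the strong $(L^{p'}(\Omega))^N$-convergence of $(g_n)$ to control, as $n\to\infty$ then $k\to\infty$, the energy carried by the thin level sets $\{k<|u_n|<k+1\}$: it carries no diffuse mass, and against $\varphi$ it is asymptotically accounted for by the singular masses $\int_\Omega\varphi\,d\mu_s^{\pm}$. An elementary consequence of the Marcinkiewicz bounds is moreover that
\begin{equation*}
\lim_{k\to\infty}\ \sup_{n}\ \int_{\{|u_n|>k\}}|\nabla u_n|^{p-1}\,dx=0 .
\end{equation*}
It is precisely here that the notion of good approximation is used in an essential way: $\lambda_n,\eta_n\ge0$ with narrow convergence to $\mu_s^{\pm}$, the strong convergence of $g_n$ and the equi-integrability of $f_n$ together prevent the singular masses from leaking into the diffuse part of the limit.

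\textbf{Step 3 (strong convergence of the truncations).} Fix $k>0$. By a standard monotonicity argument one tests the equation for $u_n$ against a function built from $T_k(u_n)-T_k(u)$, modified near the set where $|u_n|$ is large so as to stay bounded, to lie in $W_0^{1,p}(\Omega)$ (after approximating $T_k(u)$ by smooth functions), and to vanish where $|u_n|=\infty$; the contributions of $\lambda_n$, $\eta_n$ and of the high level sets to the right-hand side are then either removed or rendered computable by Step 2, and what survives of the right-hand side tends to $0$ as $n\to\infty$ by the strong/weak convergences of $f_n$, $g_n$ and $T_k(u_n)$. Discarding the controlled corrections on the left-hand side and using $\nabla T_k(u_n)\rightharpoonup\nabla T_k(u)$ weakly in $(L^p(\Omega))^N$, one obtains
\begin{equation*}
\limsup_{n\to\infty}\int_\Omega\big(|\nabla T_k(u_n)|^{p-2}\nabla T_k(u_n)-|\nabla T_k(u)|^{p-2}\nabla T_k(u)\big)\cdot\big(\nabla T_k(u_n)-\nabla T_k(u)\big)\,dx\le 0 ;
\end{equation*}
since the integrand is nonnegative, this limit is $0$, and the classical monotonicity inequalities for $\xi\mapsto|\xi|^{p-2}\xi$ (in both regimes $p\ge2$ and $1<p<2$) yield $T_k(u_n)\to T_k(u)$ strongly in $W_0^{1,p}(\Omega)$ for every $k>0$, which is already the strong-convergence assertion of the theorem.

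\textbf{Step 4 (passage to the limit).} Let $h\in W^{1,\infty}(\mathbb R)$ with $\operatorname{supp}h'\subset[-M,M]$ and $\varphi\in W^{1,s}(\Omega)$, $s>N$, with $h(u)\varphi\in W_0^{1,p}(\Omega)$; then $h(u_n)\varphi\in W_0^{1,p}(\Omega)$, and (\ref{norr}) for $(u_n,\mu_n)$ reads
\begin{equation*}
\int_\Omega|\nabla u_n|^{p-2}\nabla u_n\cdot\nabla(h(u_n)\varphi)\,dx=\int_\Omega h(u_n)\varphi\,f_n\,dx+\int_\Omega g_n\cdot\nabla(h(u_n)\varphi)\,dx+h(\infty)\!\int_\Omega\!\varphi\,d\lambda_n-h(-\infty)\!\int_\Omega\!\varphi\,d\eta_n .
\end{equation*}
Writing $\nabla(h(u_n)\varphi)=h'(u_n)\varphi\,\nabla T_M(u_n)+h(u_n)\nabla\varphi$, the left-hand side involves only $\nabla T_M(u_n)\to\nabla T_M(u)$ strongly (Step 3) together with $h(u_n),h'(u_n)\to h(u),h'(u)$ a.e.\ and boundedly, so it converges to $\int_\Omega|\nabla u|^{p-2}\nabla u\cdot\nabla(h(u)\varphi)\,dx$; on the right-hand side the $f_n$-term converges by $f_n\rightharpoonup f$ in $L^1(\Omega)$ against $h(u_n)\varphi\to h(u)\varphi$ weak-$\ast$ in $L^\infty$, the $g_n$-term by strong $L^{p'}$ convergence of $g_n$ against the weakly convergent $\nabla(h(u_n)\varphi)$, and the last two by narrow convergence $\lambda_n\to\mu_s^{+}$, $\eta_n\to\mu_s^{-}$. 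Hence $u$ satisfies (\ref{norr}) with data $\mu$, i.e.\ it is an R-solution of (\ref{mu}). The main obstacle throughout is Step 2 together with the choice of test functions in Step 3: they must be simultaneously admissible, must neutralise or exactly account for the singular measures $\lambda_n,\eta_n$, and must make the contributions of $\{|u_n|>M\}$ vanish uniformly in $n$ as $M\to\infty$ — this uniformity is exactly what the definition of good approximation is designed to guarantee, and it is the heart of the DMOP argument.
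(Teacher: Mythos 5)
First, note that the paper does not prove Theorem \ref{fund} at all: it is quoted verbatim from \cite{DMOP} (Theorem 3.1 there), so there is no in-paper argument to compare yours against. Your sketch does reproduce the correct architecture of the DMOP proof (compactness from Lemma \ref{ldmop}, energy estimates near the level sets where $|u_n|$ is large, strong convergence of truncations by monotonicity, passage to the limit in the renormalized identity), but Steps 2 and 3 are descriptions of what must be done rather than proofs: the construction of the cutoff functions concentrated near the supports of $\mu_s^{\pm}$, the decomposition of the energy on $\{k<|u_n|<k+1\}$ into a diffuse part that vanishes and a part accounted for by the singular masses, and the verification that the chosen test functions are admissible, are precisely the content of several pages of \cite{DMOP} and cannot be waved at. As a sketch of strategy it is accurate; as a proof it is incomplete exactly where the theorem is hard.

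There is also a genuine error in Step 4. You write the renormalized identity (\ref{norr}) for $u_n$ with the terms $h(\infty)\int_\Omega\varphi\,d\lambda_n-h(-\infty)\int_\Omega\varphi\,d\eta_n$ on the right-hand side, i.e.\ you treat $\lambda_n$ and $\eta_n$ as the $p$-singular parts of $\mu_n$. But in the definition of a good approximation $\lambda_n,\eta_n$ are merely nonnegative bounded measures converging narrowly to $\mu_s^{\pm}$; in the canonical construction (Lemma \ref{app}) they are \emph{smooth} functions, hence diffuse, and the correct identity for $u_n$ carries $\int_\Omega h(u_n)\varphi\,d\lambda_n$ and $\int_\Omega h(u_n)\varphi\,d\eta_n$ instead. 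The entire difficulty of the stability theorem is to show that these diffuse terms nonetheless converge to $h(+\infty)\int_\Omega\varphi\,d\mu_s^{+}$ and $h(-\infty)\int_\Omega\varphi\,d\mu_s^{-}$, i.e.\ that the approximating masses asymptotically concentrate where $u_n\to\pm\infty$; this requires the near-singular-set energy estimates and a double limit in the auxiliary parameters, none of which appears in your argument. As written, Step 4 assumes the conclusion of the hardest part of the theorem.
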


\begin{remark}
As a consequence, for any measure $\mu \in \mathcal{M}_{b}(\Omega ),$ there
exists at least a solution of problem (\ref{mu}). Indeed, it is pointed out
in \cite{DMOP} that any measure $\mu \in \mathcal{M}_{b}(\Omega )$ can be
approximated by such a sequence: extending $\mu $ by $0$ to $\mathbb{R}^{N},$
one can take $g_{n}=g,$ $f_{n}=\rho _{n}\ast f,$ $\lambda _{n}=\rho _{n}\ast
\mu _{s}^{+},$ $\eta _{n}=\rho _{n}\ast \mu _{s}^{-},$ where $(\rho _{n})$
is a regularizing sequence; then $f_{n},\lambda _{n},\eta _{n}\in
C_{b}^{\infty }\left( \Omega \right) $. Notice that this approximation does
not respect the sign: $\mu \in \mathcal{M}_{b}^{+}(\Omega )$ does not imply
that $\mu _{n}\in \mathcal{M}_{b}^{+}(\Omega )$. \medskip
\end{remark}

In the sequel we precise the approximation property, still partially used in
\cite[Theorem 2.18]{HaBi} for problem (\ref{pbu}).

\begin{lemma}
\label{app}Let $\mu \in \mathcal{M}_{b}(\Omega ).$ Then

(i) there exists a sequence $\left( \mu _{n}\right) $ of good approximations
of $\mu ,$ such $\mu _{n}\in $ $W^{-1,p^{\prime }}(\Omega ),$ and $\mu
_{n}^{0}$ has a compact support in $\Omega $, $\lambda _{n},\eta _{n}\in
C_{b}^{\infty }\left( \Omega \right) ,\left( f_{n}\right) $ converges to $f$
strongly in $L^{1}(\Omega ),$ and
\begin{equation}
\left\vert \mu _{n}\right\vert (\Omega )\leqq 4\left\vert \mu \right\vert
(\Omega ),\quad \forall n\in \mathbb{N}  \label{unif}
\end{equation}%
Moreover, if $\mu \in \mathcal{M}_{b}^{+}(\Omega )$, then one can find the
approximation such that $\mu _{n}\in \mathcal{M}_{b}^{+}(\Omega )$ and $%
\left( \mu _{n}\right) $ is nondecreasing.

(ii) there exists another sequence $\left( \mu _{n}\right) $ of good
approximations of $\mu ,$ with , with $f_{n},g_{n}\in \mathcal{D}\left(
\Omega \right) $, $\lambda _{n},\eta _{n}\in C_{b}^{\infty }\left( \Omega
\right) $, such that $\left( f_{n}\right) $ converges to $f$ strongly in $%
L^{1}(\Omega ),$ satisfying (\ref{unif}); if $\mu \in \mathcal{M}%
_{b}^{+}(\Omega )$, one can take $\mu _{n}^{0}\in \mathcal{D}^{+}\left(
\Omega \right) .$
\end{lemma}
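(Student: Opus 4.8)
The plan is to prove Lemma \ref{app} by a two-step construction: first a Hahn–Banach/duality argument to realize the absolutely continuous part $\mu^0$ by compactly supported data, then a regularization by convolution applied separately to the four pieces $f$, $g$, $\mu_s^+$, $\mu_s^-$. More precisely, start from the decomposition $\mu = \mu^0 + \mu_s^+ - \mu_s^-$ with $\mu^0 = f - \operatorname{div} g$, $f \in L^1(\Omega)$, $g \in (L^{p'}(\Omega))^N$ (Remark \ref{imp}). The output of the construction must be a \emph{good approximation} in the sense of the definition preceding Theorem \ref{fund}, so every step must be checked against that definition: $f_n \to f$ weakly (here even strongly) in $L^1$, $g_n \to g$ strongly in $(L^{p'})^N$ with $(\operatorname{div} g_n)$ bounded in $\mathcal{M}_b$, and $\lambda_n \to \mu_s^+$, $\eta_n \to \mu_s^-$ narrowly. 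The extra conclusions to be extracted are: $\mu_n \in W^{-1,p'}(\Omega)$ (so that $u_n$ is actually a weak solution and standard variational theory applies), compact support of $\mu_n^0$ in part (i), $f_n, g_n \in \mathcal{D}(\Omega)$ in part (ii), the uniform bound \eqref{unif} with constant $4$, and — crucially — sign and monotonicity when $\mu \ge 0$.

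For part (i): extend everything by zero to $\mathbb{R}^N$. Fix an exhaustion of $\Omega$ by compact sets $K_n$ with $\bigcup K_n = \Omega$ and a cutoff $\theta_n \in \mathcal{D}(\Omega)$, $0 \le \theta_n \le 1$, $\theta_n \equiv 1$ on $K_n$. The natural choice is to first regularize $f$ and $g$ by mollification, $\tilde f_n = \rho_{\varepsilon_n} * f$, $\tilde g_n = \rho_{\varepsilon_n} * g$ (so $\tilde f_n \to f$ strongly in $L^1$, $\tilde g_n \to g$ strongly in $L^{p'}$), then truncate the support: $f_n = \theta_n \tilde f_n$, $g_n = \theta_n \tilde g_n$, so that $\mu_n^0 = f_n - \operatorname{div} g_n \in \mathcal{D}'(\Omega)$ has compact support and lies in $W^{-1,p'}(\Omega)$. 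One must check $\operatorname{div} g_n = \theta_n \operatorname{div}\tilde g_n + \nabla\theta_n \cdot \tilde g_n$ is bounded in $\mathcal{M}_b$ — the second term is bounded in $L^{p'} \subset \mathcal{M}_b$, but $\operatorname{div}\tilde g_n$ need not be bounded in $\mathcal{M}_b$ in general; so instead I would keep $g_n = \theta_n g$ without mollifying $g$ (since $g$ itself is already in $L^{p'}$ and that suffices for "good approximation", which only asks $g_n \to g$ strongly in $L^{p'}$ and $\operatorname{div} g_n$ bounded — and $\operatorname{div}(\theta_n g) = \theta_n \operatorname{div} g + \nabla\theta_n\cdot g$ with the first term bounded by $|\mu^0|(\Omega)$-type estimates only if $\operatorname{div} g \in \mathcal{M}_b$, which again is not guaranteed). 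This is the main obstacle and the point requiring care: one should choose the decomposition $\mu^0 = f - \operatorname{div} g$ from the start so that $\operatorname{div} g \in \mathcal{M}_b(\Omega)$ — this is possible because $\mu^0 \in \mathcal{M}_b(\Omega)$, so writing $\mu^0 = f - \operatorname{div} g$ with $f \in L^1$ forces $\operatorname{div} g = f - \mu^0 \in \mathcal{M}_b$ automatically. With that observation the cutoff estimates go through, and a diagonal/slow-enough choice of $\varepsilon_n$ relative to $K_n$ gives \eqref{unif}: indeed $|\mu_n|(\Omega) \le |\mu_n^0|(\Omega) + \lambda_n(\Omega) + \eta_n(\Omega)$, and each mollified/truncated piece has mass converging to that of the corresponding piece of $\mu$, so for $n$ large the sum is $\le |\mu^0|(\Omega) + \mu_s^+(\Omega) + \mu_s^-(\Omega) + \text{(small)} \le 2|\mu|(\Omega)$; the factor $4$ leaves comfortable room and also absorbs the first few terms by enlarging the mollification appropriately or by a harmless rescaling. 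For $\lambda_n = \rho_{\varepsilon_n} * \mu_s^+$, $\eta_n = \rho_{\varepsilon_n} * \mu_s^-$ one gets $C_b^\infty(\Omega)$ functions (after a further cutoff to stay inside $\Omega$, which costs arbitrarily little mass since $\mu_s^\pm$ is a finite measure on $\Omega$) converging narrowly, which is the standard regularization of measures.

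For the sign and monotonicity statement: if $\mu \ge 0$ then $\mu^0 \ge 0$ and $\mu_s^\pm$ with $\mu_s^- = 0$, $\mu_s^+ = \mu_s \ge 0$. By Remark \ref{imp} (the nonnegative case) one can choose $f \ge 0$; then $f_n = \theta_n \rho_{\varepsilon_n}*f \ge 0$ and $\lambda_n = \theta_n'\rho_{\varepsilon_n}*\mu_s^+ \ge 0$, $\eta_n = 0$. To also get $\mu_n \ge 0$ as a whole and nondecreasing, the cleanest route is to bypass the $f - \operatorname{div} g$ representation for the nonnegative case and instead approximate $\mu \ge 0$ directly by $\mu_n := \theta_n(\rho_{\varepsilon_n} * \mu) \in \mathcal{D}^+(\Omega)$, which is $\ge 0$, in $W^{-1,p'}(\Omega)$, has compact support, and whose total mass is $\le \mu(\Omega) \le |\mu|(\Omega)$; monotonicity $\mu_n \le \mu_{n+1}$ is arranged by choosing $\theta_n \uparrow$ pointwise and $\varepsilon_n \downarrow$ with $\rho_{\varepsilon_n}*\mu$ nondecreasing along a suitable subsequence — in fact one takes $\mu_n$ to be an increasing sequence of the form $\chi_{\Omega_n}(\rho_{\varepsilon_n}*\mu)$ truncated, using that $\rho_\varepsilon * \mu \uparrow$ need not hold for all $\varepsilon$ but a monotone subsequence in $L^1_{loc}$ can be extracted and then further truncated downward to enforce monotonicity while keeping narrow convergence; this is exactly the device "still partially used in \cite[Theorem 2.18]{HaBi}". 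Then $f_n := \mu_n$, $g_n := 0$, $\lambda_n := 0$, $\eta_n := 0$ is a legitimate good approximation with all the required properties. Part (ii) is then obtained from part (i) by one additional mollification of $f_n$ and $g_n$ (now that each already has compact support in $\Omega$, mollifying with small enough parameter keeps the support inside $\Omega$ and keeps $\operatorname{div} g_n$ bounded since $\operatorname{div} g_n \in \mathcal{M}_b$ from part (i) and convolution is mass-nonincreasing), yielding $f_n, g_n \in \mathcal{D}(\Omega)$, and in the nonnegative case taking $\mu_n^0 := \rho * (\chi_{\Omega_n}\rho_{\varepsilon_n}*\mu) \in \mathcal{D}^+(\Omega)$; the uniform bound \eqref{unif} is preserved because every operation — mollification, multiplication by a cutoff $\le 1$ — does not increase total variation.
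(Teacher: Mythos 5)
Your proposal correctly isolates the hard point of part (i) --- producing a compactly supported $\mu_n^0$ while keeping $(\operatorname{div}g_n)$ bounded in $\mathcal{M}_b(\Omega)$ and the total variation under control --- but the fix you offer does not close it. It is true that $\operatorname{div}g=f-\mu^0\in\mathcal{M}_b(\Omega)$ automatically, so the term $\theta_n\operatorname{div}g$ is harmless; the problem is the commutator $g\cdot\nabla\theta_n$. Its $L^1$ norm is controlled only by $\left\Vert \nabla\theta_n\right\Vert_{L^p}\left\Vert g\right\Vert_{L^{p'}(\operatorname{supp}\nabla\theta_n)}$, and the first factor necessarily blows up as $K_n\uparrow\Omega$ (it dominates $cap_{1,p}(K_n,\Omega)^{1/p}$), while the decay of $g$ in the boundary layer can be arbitrarily slow; one can write down $g\in(L^{p'})^N$ for which $\left\Vert g\cdot\nabla\theta_n\right\Vert_{L^1}\rightarrow\infty$ for every admissible choice of cutoffs. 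So $(\operatorname{div}g_n)$ need not be bounded in $\mathcal{M}_b$, $(f_n)$ need not converge weakly in $L^1$ if you shift the commutator into $f_n$, and in any case $\left\Vert f\right\Vert_{L^1}$ and $|\operatorname{div}g|(\Omega)$ are not controlled by $|\mu|(\Omega)$, so the bound (\ref{unif}) with the constant $4$ is out of reach by this route. The paper avoids the cutoff of $g$ entirely: it uses the factorization of \cite{BoGaOr}, $(\mu^0)^{\pm}=\varphi_i\gamma_i$ with $\gamma_i\in\mathcal{M}_b^{+}(\Omega)\cap W^{-1,p'}(\Omega)$ and $\varphi_i\in L^1(\Omega,\gamma_i)$, and truncates the \emph{density}: $\mu_{n,i}^0=T_n(\varphi_i\chi_{K_n})\gamma_i$ is simultaneously compactly supported, in $\mathcal{M}_b^{+}\cap W^{-1,p'}$, nondecreasing, and of mass $\leqq\mu_i(\Omega)$ by construction; the pair $(f_n,g_n)$ with the required convergences is then extracted a posteriori from the strong $W^{-1,p'}$ convergence of the remainders $G_{n,i}$. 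That factorization is the missing idea, and it is also what delivers the constant $4$ and the monotonicity for free.

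The treatment of the nonnegative case is also not correct as written. Taking $f_n:=\theta_n(\rho_{\varepsilon_n}\ast\mu)$, $g_n=0$, $\lambda_n=\eta_n=0$ does \emph{not} give a good approximation when $\mu_s^{+}\neq0$: the definition requires $(f_n)$ to converge weakly in $L^1(\Omega)$ to $f$ and $(\lambda_n)$ to converge narrowly to $\mu_s^{+}$, whereas $\rho_{\varepsilon_n}\ast\mu_s^{+}$ concentrates and has no weak $L^1$ limit, and $\lambda_n=0$ does not converge narrowly to $\mu_s^{+}$. The whole point of the notion of good approximation (and of Theorem \ref{fund}) is that the singular part must be approximated in the $\lambda_n$ slot and the diffuse part in the $f_n-\operatorname{div}g_n$ slot; collapsing everything into $f_n$ destroys the structure the stability theorem relies on. The claim that a pointwise monotone subsequence of mollifications can be extracted is likewise unsubstantiated. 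Part (ii) (one further mollification of a part-(i) sequence) does agree with the paper, but it only works once part (i) is actually established.
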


\begin{proof}
(i) Let $\mu =\mu ^{0}+\mu _{s}^{+}-\mu _{s}^{-},$ where $\mu ^{0}\in
\mathcal{M}^{p}(\Omega ),$ \ $\mu _{s}^{+},\mu _{s}^{-}$ are $p$-singular
and $\mu _{1}=(\mu ^{0})^{+},\mu _{2}=(\mu ^{0})^{-};$ thus $\mu _{1}(\Omega
)+\mu _{2}(\Omega )+\mu _{s}^{+}(\Omega )+\mu _{s}^{-}(\Omega )$ $\leqq
2\left\vert \mu (\Omega )\right\vert .$ Following \cite{BoGaOr}, for $i=1,2,$
one has
\begin{equation*}
\mu _{i}=\varphi _{i}\gamma _{i},\qquad \text{with }\gamma _{i}\in \mathcal{M%
}_{b}^{+}(\Omega )\cap W^{-1,p^{\prime }}(\Omega )\text{ and }\varphi
_{i}\in L^{1}(\Omega ,\gamma _{i}).
\end{equation*}%
Let $(K_{n})_{n\geqq 1}$ be an increasing sequence of compacts of union $%
\Omega ;$ set
\begin{equation*}
\nu _{1,i}=T_{1}(\varphi _{i}\chi _{K_{1}})\gamma _{i},\quad \nu
_{n,i}=T_{n}(\varphi _{i}\chi _{K_{n}})\gamma _{i}-T_{n-1}(\varphi _{i}\chi
_{K_{n-1}})\gamma _{i},\quad \mu _{n,i}^{0}=\sum_{1}^{n}\nu
_{n,i}=T_{n}(\varphi _{i}\chi _{K_{n}})\gamma _{i}.
\end{equation*}%
Thus $\mu _{n,i}^{0}\in \mathcal{M}_{b}^{+}(\Omega )\cap W^{-1,p^{\prime
}}(\Omega ).$ Regularizing by $(\rho _{n}),$ there exists $\phi _{n,i}$ $\in
\mathcal{D}^{+}(\Omega )$ such that $\left\Vert \phi _{n,i}-\nu
_{n,i}\right\Vert _{W^{-1,p^{\prime }}(\Omega )}$ $\leqq 2^{-n}\mu
_{i}(\Omega ).$ Then $\xi _{n,i}=\sum_{1}^{n}\phi _{k,i}\in \mathcal{D}%
^{+}(\Omega );$ $(\eta _{n,i})$ converges strongly in $L^{1}(\Omega )$ to a
function $\xi _{i}$ and $\left\Vert \xi _{n,i}\right\Vert _{L^{1}(\Omega
)}\leqq \mu _{i}(\Omega ).$ Also setting
\begin{equation*}
G_{n,i}=\mu _{n,i}^{0}-\xi _{n,i}=\sum_{1}^{n}(\nu _{n,i}-\phi _{k,i})\in
W^{-1,p^{\prime }}(\Omega )\cap \mathcal{M}_{b}(\Omega ),
\end{equation*}%
then $\left( G_{n,i}\right) $ converges strongly in $W^{-1,p^{\prime
}}(\Omega )$ to some $G_{i}$, and $\mu _{i}=\xi _{i}+G_{i},$ and $\left\Vert
G_{n,i}\right\Vert _{\mathcal{M}_{b}(\Omega )}$ $\leqq 2\mu _{i}(\Omega ).$
Otherwise $\lambda _{n}=\rho _{n}\ast \mu _{s}^{+}$ and $\eta _{n}=\rho
_{n}\ast \mu _{s}^{-}$ $\in C_{b}^{\infty }\left( \Omega \right) $ converge
respectively to $\mu _{s}^{+},\mu _{s}^{-}$ in the narrow topology, with $%
\left\Vert \lambda _{n}\right\Vert _{L^{1}(\Omega )}\leqq \mu
_{s}^{+}(\Omega ),$ $\left\Vert \eta _{n}\right\Vert _{L^{1}(\Omega )}\leqq
\mu _{s}^{-}(\Omega ).$ Then we set
\begin{equation*}
\mu _{n}=\mu _{n}^{0}+\rho _{n}-\eta _{n}\qquad \text{with }\mu _{n}^{0}=\xi
_{n}+G_{n},\text{ }\xi _{n}=\xi _{n,1}-\xi _{n,2}\in \mathcal{D}(\Omega
),\quad G_{n}=G_{n,1}-G_{n,2}\in W^{-1,p^{\prime }}(\Omega )
\end{equation*}%
thus $\mu _{n}^{0}$ has a compact support. Moreover $\mu _{0}=\xi +G$ with $%
\xi =\xi _{1}-\xi _{2}\in \mathcal{D}(\Omega ),$ and $G=G_{1}-G_{2}=$ $%
\varphi + {div}g$ for some $\varphi \in L^{p^{\prime }}\left( \Omega
\right) $ and $g\in (L^{p^{\prime }}\left( \Omega \right) )^{N},$ and $%
\left( G_{n}\right) $ converges to $G$ in $W^{-1,p^{\prime }}(\Omega ).$
Then we can find $\psi _{n}\in L^{p^{\prime }}\left( \Omega \right) ,\phi
_{n}\in (L^{p^{\prime }}\left( \Omega \right) )^{N}$, such that $%
G_{n}-G=\psi _{n}+ {div}\phi _{n}$ and $\left\Vert G_{n}-G\right\Vert
_{W^{-1,p^{\prime }}(\Omega )}=\max (\left\Vert \psi _{n}\right\Vert
_{L^{p^{\prime }}\left( \Omega \right) },\left\Vert \phi _{n}\right\Vert
_{(L^{P^{\prime }}\left( \Omega \right) )^{N}})$; then $\mu _{0}=f+ {div}%
g$ with $f=\xi +\varphi $ and $\mu _{n}^{0}=f_{n}+ {div}g_{n},$ with $%
f_{n}=\xi _{n}+\varphi +\psi _{n},$ $g_{n}=g+\phi _{n}$. Thus $\left( \mu
_{n}\right) $ is a good approximation of $\mu ,$ and satisfies (\ref{unif}).
If $\mu $ is nonnegative, then $\mu _{n}$ is nonnegative.\medskip

(ii) We replace $\mu _{n}^{0}$ by $\rho _{m}\ast \mu _{n}^{0}$ $=\rho
_{m}\ast f_{n}+ {div}(\rho _{m}\ast g_{n})$, $m\in \mathbb{N},$ and
observe that $\left\vert \rho _{m}\ast \mu _{n}^{0}\right\vert (\Omega
)\leqq \left\vert \mu _{n}^{0}\right\vert (\Omega );$ then we can construct
another sequence satisfying the conditions.
\end{proof}

\subsection{\textbf{\ Locally renormalized solutions}}

Let $\mu \in \mathcal{M}(\Omega )$. Following the notion introduced in \cite%
{Bi1}, we say that $u$ is a \textbf{locally renormalized solution, }called%
\textbf{\ LR-solution}, of problem
\begin{equation}
-\Delta _{p}u=\mu ,\qquad \text{in }\Omega ,  \label{pli}
\end{equation}%
if $u$ is measurable and finite a.e. in $\Omega $, $T_{k}(u)\in
W_{loc}^{1,p}(\Omega )$ for any $k>0,$ and%
\begin{equation}
\left\vert u\right\vert ^{p-1}\in L_{loc}^{\sigma }(\Omega ),\forall \sigma
\in \left[ 1,N/(N-p\right) ;\qquad \left\vert \nabla u\right\vert ^{p-1}\in
L_{loc}^{\tau }(\Omega ),\forall \tau \in \tau \in \left[ 1,N/(N-1)\right) ,
\label{ugc}
\end{equation}%
and\textit{\ }for any\textit{\ }$h\in W^{1,\infty }(\mathbb{R})$\textit{\ }%
such that\textit{\ }$h^{\prime }$\textit{\ }has a compact support, and%
\textit{\ }$\varphi \in W^{1,m}(\Omega )$\textit{\ }for some\textit{\ }$m>N,$%
\textit{\ }with compact support, such that\textit{\ }$h(u)\varphi \in
W^{1,p}(\Omega ),$ there holds
\begin{equation}
\int_{\Omega }\left\vert \nabla u\right\vert ^{p-2}\nabla u.\nabla
(h(u)\varphi )dx=\int_{\Omega }h(u)\varphi d\mu _{0}+h(+\infty )\int_{\Omega
}\varphi d\mu _{s}^{+}-h(-\infty )\int_{\Omega }\varphi d\mu _{s}^{-}.
\label{plo}
\end{equation}

\begin{remark}
Hence the LR-solutions are solutions in $\mathcal{D}^{\prime }(\Omega ).$
From a recent result of \cite{KilKuTu}, if $\mu \in \mathcal{M}^{+}(\Omega )$%
, any $p$-superharmonic function is a LR-solution, and conversely any
LR-solution admits a $p$-superharmonic representant.
\end{remark}

\section{Existence in the subcritical case}

We first give a general existence result, where $H$ satisfies some
subcritical growth assumptions on $u$ and $\nabla u$, without any assumption
on the sign of $H$\ or $\mu $: we consider the problem%
\begin{equation}
-\Delta _{p}u+H(x,u,\nabla u)=\mu \hspace{0.5cm}\text{in }\Omega ,\qquad u=0%
\hspace{0.5cm}\text{on }\partial \Omega ,  \label{PR}
\end{equation}%
where $\mu \in \mathcal{M}_{b}(\Omega ).$ We say that $u$ is a R-solution of
problem (\ref{P0}) if  $T_{k}(u)\in W_{0}^{1,p}(\Omega )$ for any $k>0,$ and
$H(x,u,\nabla u)\in L^{1}(\Omega )$ and $u$ is a R-solution of
\begin{equation*}
-\Delta _{p}u=\mu -H(x,u,\nabla u),\qquad \text{in }\Omega ,\qquad u=0%
\hspace{0.5cm}\text{on }\partial \Omega .
\end{equation*}

\begin{theorem}
\label{subc}Let $\mu \in \mathcal{M}_{b}(\Omega ),$ and assume that
\begin{equation}
\left\vert H(x,u,\xi )\right\vert \leqq f(x)\left\vert u\right\vert
^{Q}+g(x)\left\vert \xi \right\vert ^{q}+\ell (x)  \label{fgh}
\end{equation}%
with $Q,q>0$ and $f\in L^{r}(\Omega )$ with $Qr^{\prime }<Q_{c},$ $g\in
L^{s}(\Omega )$ with $qs^{\prime }<q_{c}$, and $\ell \in L^{1}(\Omega
).\medskip $

Then there exists a R-solution of (\ref{PR}) if, either max($Q,q)>p-1$ and $%
\left\vert \mu \right\vert (\Omega )$ and $\left\Vert \ell \right\Vert
_{L^{1}(\Omega )}$ are small enough, or $q=p-1>Q$ and $\left\Vert
f\right\Vert _{L^{r}(\Omega )}$ is small enough, or $Q=p-1>q$ and $%
\left\Vert g\right\Vert _{L^{s}(\Omega )}$ is small enough, or $q,Q<p-1$.
\end{theorem}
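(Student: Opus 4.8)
The plan is to prove existence by a Schauder-type fixed point argument combined with the stability result (Theorem \ref{fund}). Given a function $v$ in a suitable ball of the space $X=\{v:T_k(v)\in W_0^{1,p}(\Omega)\ \forall k,\ |v|^{p-1}\in L^{\sigma}(\Omega),\ |\nabla v|^{p-1}\in L^{\tau}(\Omega)\}$ equipped with a norm built from the Marcinkiewicz norms of $v$ and $\nabla v$, define $u=\Phi(v)$ to be the (a chosen) R-solution of $-\Delta_p u=\mu-H(x,v,\nabla v)$ with zero boundary data. For this to make sense one needs $H(x,v,\nabla v)\in L^1(\Omega)$, which follows from the growth bound (\ref{fgh}) together with the integrability of $v,\nabla v$: indeed, $v\in L^{Q_c,\infty}$-type control and $Qr'<Q_c$ give $f|v|^Q\in L^1$ via H\"older in Lorentz spaces, and similarly $qs'<q_c$ handles $g|\nabla v|^q$. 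So $\mu-H(x,v,\nabla v)\in\mathcal M_b(\Omega)$ and $\Phi$ is well-defined into $X$.

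The key estimates come from Lemma \ref{ldmop}(i): if $R:=|\mu|(\Omega)+\|H(x,v,\nabla v)\|_{L^1(\Omega)}$, then $\|u\|_{L^{Q_c,\infty}}\le C R^{N/(N-p)}$ (resp. the $p=N$ substitute) and $\|\nabla u\|_{L^{q_c,\infty}}\le CR^{N/(N-1)}$. Combining with (\ref{fgh}) and H\"older in Lorentz spaces, if $v$ lies in the ball $B_\rho=\{\|v\|_X\le\rho\}$ then
\begin{equation*}
\|H(x,v,\nabla v)\|_{L^1(\Omega)}\le C_1\|f\|_{L^r}\rho^{Q}+C_2\|g\|_{L^s}\rho^{q}+\|\ell\|_{L^1},
\end{equation*}
so that $\|u\|_X\le \Psi(\rho):=C\bigl(|\mu|(\Omega)+C_1\|f\|_{L^r}\rho^Q+C_2\|g\|_{L^s}\rho^q+\|\ell\|_{L^1}\bigr)^{\theta}$ for an exponent $\theta>0$ (essentially $\max(N/(N-p),N/(N-1))$ or rather the max over the two components normalized appropriately). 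One then checks, in each of the four regimes, that there is $\rho>0$ with $\Psi(\rho)\le\rho$, so $\Phi$ maps $B_\rho$ into itself: when $\max(Q,q)>p-1$ the map $\rho\mapsto\Psi(\rho)$ is superlinear at infinity but $\Psi(0)$ is small when $|\mu|(\Omega),\|\ell\|_{L^1}$ are small, so a small fixed-ball exists; when $q=p-1>Q$ (so the $|\nabla u|^q$ term scales linearly, since $\theta$ for the gradient component is $N/(N-1)$ applied to an $L^1$ norm growing like $\rho^{p-1}$, giving linear growth in $\rho$) smallness of $\|g\|_{L^s}$ forces the Lipschitz-type constant below $1$; symmetrically for $Q=p-1>q$ with $\|f\|_{L^r}$; and when $q,Q<p-1$ the right side grows sublinearly so a large ball works unconditionally. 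Here one must be slightly careful that the relevant scaling exponent for the $|u|^Q$ term is indeed $Q\cdot\frac{N-p}{N}$ relative to the $X$-norm normalization and for the $|\nabla u|^q$ term is $q\cdot\frac{N-1}{N}$, and that $Q=p-1$ (resp. $q=p-1$) is exactly the borderline making these $=1$.

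For continuity and compactness of $\Phi$ on $B_\rho$: take $v_n\to v$ in $X$ (or just a.e. with the a priori bounds). Then $H(x,v_n,\nabla v_n)\to H(x,v,\nabla v)$ in $L^1(\Omega)$ by Vitali's theorem — the equiintegrability is supplied by the uniform Lorentz bounds on $v_n,\nabla v_n$ and the strict inequalities $Qr'<Q_c$, $qs'<q_c$, which give a little room above $L^1$. Hence the measures $\mu_n:=\mu-H(x,v_n,\nabla v_n)$ form a good approximation of $\mu-H(x,v,\nabla v)$ in the sense required (writing $H(x,v_n,\nabla v_n)$ as the $f_n$-part of the decomposition, with $g_n\equiv 0$ and no singular part), so Theorem \ref{fund} yields $\Phi(v_n)=u_n\to u=\Phi(v)$ a.e. with $T_k(u_n)\to T_k(u)$ strongly in $W_0^{1,p}$, and combined with the uniform Marcinkiewicz bounds and interpolation this upgrades to convergence in $X$; the same argument with an arbitrary bounded sequence gives compactness. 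Schauder's fixed point theorem then produces $u\in B_\rho$ with $u=\Phi(u)$, i.e. a R-solution of (\ref{PR}). The main obstacle is the continuity/compactness step: one must argue carefully that a.e. convergence plus the uniform a priori estimates of Lemma \ref{ldmop} imply the $L^1$-convergence of the nonlinear term (Vitali, using the strict subcriticality to beat equiintegrability) and that Theorem \ref{fund}'s hypotheses on "good approximations" are genuinely met, rather than just invoking weak compactness; everything else is bookkeeping with Lorentz-space H\"older inequalities and elementary analysis of the scalar inequality $\Psi(\rho)\le\rho$ in the four cases.
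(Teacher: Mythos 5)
Your a priori estimate and the case analysis of the scalar inequality $\Psi(\rho)\leqq\rho$ in the four regimes match the paper's computation almost exactly, and the final appeal to Theorem \ref{fund} via $L^{1}$-convergence of the nonlinear term is also the paper's mechanism. But the skeleton you hang this on --- Schauder's fixed point theorem for the solution map $v\mapsto\Phi(v)$ --- has a genuine gap: for $p\neq 2$ and a measure with a nontrivial singular part, uniqueness of R-solutions of $-\Delta_{p}u=\mu-H(x,v,\nabla v)$ is an open problem, so $\Phi$ is only a multivalued map and ``a chosen R-solution'' is not a well-defined selection. This kills the continuity step: Theorem \ref{fund} only produces a subsequence of $\Phi(v_{n})$ converging to \emph{some} R-solution of the limit equation, with no way to identify it with the designated $\Phi(v)$. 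A Kakutani-type substitute would require convexity of the solution set, which is equally unavailable. There is also a secondary structural problem: Schauder needs a closed convex set in a genuine normed space, but $L^{q_{c},\infty}$ is not normable when $p<2-1/N$ (where $q_{c}<1$), and controlling $\||v|^{Q}\|_{L^{r'}}$ as a norm needs $Qr'\geqq 1$. The paper's remark after the theorem makes exactly this point: the result is proved \emph{without} Schauder precisely to drop the restrictions $1\leqq Qr'$, $1\leqq qs'$ that a fixed-point argument forces.

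The paper's route avoids all of this by an explicit decoupled iteration: $u_{n}$ is defined as the solution of $-\Delta_{p}u_{n}=\Phi_{n-1}(u_{n-1})+\mu_{n}$, where $\Phi_{n-1}(u_{n-1})$ is a truncation of $-H$ bounded by $n-1$ and $\mu_{n}\in W^{-1,p'}(\Omega)$ is a good approximation from Lemma \ref{app}, so each step is a uniquely solvable variational problem and no continuity of a solution operator is ever invoked. Your induction inequality $b_{1}\lambda^{Q/(p-1)}+b_{2}\lambda^{q/(p-1)}+\eta+a\leqq\lambda$ and your Vitali/equiintegrability argument for the $L^{1}$-convergence of $H(x,u_{n},\nabla u_{n})$ then carry over essentially verbatim to this iteration (in fact Lemma \ref{ldmop}(ii) already supplies the a.e.\ convergence of $\nabla u_{n}$ and the strong $L^{\tau}$ convergence of $|\nabla u_{n}|^{p-1}$, so Vitali is not even needed separately). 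If you recast your write-up in that form, discarding the fixed-point framing, the proof is correct and coincides with the paper's.
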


\begin{proof}
\textbf{(i) Construction of a sequence of approximations.} We consider a
sequence $\left( \mu _{n}\right) _{n\geqq 1}$\textbf{\ }of good
approximations of\textbf{\ }$\mu $, given in Lemma \ref{app} (i). For any
fixed $n\in \mathbb{N}^{\ast },$ and any $v\in W_{0}^{1,p}(\Omega )$ we
define
\begin{equation*}
M(v)=\left\vert \Omega \right\vert ^{\frac{N-p}{N}-\frac{p-1}{Qr^{\prime }}%
}\left( \int_{\Omega }\left\vert v\right\vert ^{Qr^{\prime }}dx\right) ^{%
\frac{p-1}{Qr^{\prime }}}+\left\vert \Omega \right\vert ^{\frac{N-1}{N}-%
\frac{p-1}{qs^{\prime }}}\left( \int_{\Omega }\left\vert \nabla v\right\vert
^{qs^{\prime }}dx\right) ^{\frac{p-1}{qs^{\prime }}},
\end{equation*}%
\begin{equation*}
\Phi _{n}(v)(x)=-\frac{H(x,v(x),\nabla v(x))}{1+\frac{1}{n}(f(x)\left\vert
v(x)\right\vert ^{Q}+g(x)\left\vert \nabla v(x)\right\vert ^{q}+\ell (x))}
\end{equation*}%
so that $\left\vert \Phi _{n}(v)(x)\right\vert \leqq n$ a.e. in $\Omega .$
Let $\lambda >0$ be a parameter. Starting from $u_{1}\in W_{0}^{1,p}(\Omega
) $ such that $M(u_{1})\leqq \lambda ,$ we define $u_{2}\in
W_{0}^{1,p}(\Omega )$ as the solution of the problem
\begin{equation*}
-\Delta _{p}u_{2}=\Phi _{1}(u_{1})+\mu _{1}\hspace{0.5cm}\text{in }\Omega
,\qquad U_{2}=0\hspace{0.5cm}\text{on }\partial \Omega ,
\end{equation*}%
and by induction we define $u_{n}\in W_{0}^{1,p}(\Omega )$ as the solution
of
\begin{equation*}
-\Delta _{p}u_{n}=\Phi _{n-1}(u_{n-1})+\mu _{n}\hspace{0.5cm}\text{in }%
\Omega ,\qquad u_{n}=0\hspace{0.5cm}\text{on }\partial \Omega .
\end{equation*}%
From (\ref{sti}), for any $\sigma \in \left( 0,N/(N-p\right) $ and $\tau \in
\left( 0,N/(N-1)\right) ,$
\begin{equation*}
\left\vert \Omega \right\vert ^{\frac{N-p}{N}-\frac{1}{\sigma }%
}(\int_{\Omega }\left\vert u_{n}\right\vert ^{(p-1)\sigma }dx)^{\frac{1}{%
\sigma }}+\left\vert \Omega \right\vert ^{\frac{N-1}{N}-\frac{1}{\tau }%
}(\int_{\Omega }\left\vert \nabla u_{n}\right\vert ^{(p-1)\tau }dx)^{\frac{1%
}{\tau }}\leqq C(\int_{\Omega }\left\vert \Phi _{n-1}(u_{n-1})\right\vert
dx+4\left\vert \mu \right\vert (\Omega )),\text{ }
\end{equation*}%
with $C=C(N,p,\sigma ,\tau ).$ We take $\sigma =Qr^{\prime }/(p-1)$ and $%
\tau =qs^{\prime }/(p-1);$ since
\begin{equation}
\int_{\Omega }\left\vert H(x,u_{n-1},\nabla u_{n-1})\right\vert dx\leqq
\left\Vert f\right\Vert _{L^{r}(\Omega )}\int_{\Omega }\left\vert
u_{n-1}\right\vert ^{Qr^{\prime }}dx)^{\frac{1}{r^{\prime }}}+\left\Vert
g\right\Vert _{L^{s}(\Omega )}(\int_{\Omega }\left\vert \nabla
u_{n-1}\right\vert ^{qs^{\prime }}dx)^{\frac{1}{s^{\prime }}}+\left\Vert
\ell \right\Vert _{L^{1}(\Omega )}  \label{jil}
\end{equation}%
we obtain
\begin{equation*}
M(u_{n})\leqq C(\int_{\Omega }\left\vert H(x,u_{n-1},\nabla
u_{n-1})\right\vert dx+4\left\vert \mu \right\vert (\Omega ))\leqq
b_{1}M(u_{n-1})^{Q/(p-1)}+b_{2}M(u_{n-1})^{q/(p-1)}+\eta +a
\end{equation*}%
with $C=C(N,p,q,Q),$ and $b_{1}=C\left\Vert f\right\Vert _{L^{r}(\Omega
)}\left\vert \Omega \right\vert ^{\frac{1}{r^{\prime }}-\frac{Q}{Q_{c}}},$ $%
b_{2}=C\left\Vert g\right\Vert _{L^{s}(\Omega )}\left\vert \Omega
\right\vert ^{1/s^{\prime }-q/q_{c}},$ $\eta =C\left\Vert \ell \right\Vert
_{L^{1}(\Omega )},$ $a=4C\left\vert \mu \right\vert (\Omega ).$ Then by
induction, $M(u_{n})\leqq \lambda $ for any $n\geqq 1$ if%
\begin{equation}
b_{1}\lambda ^{Q/(p-1)}+b_{2}\lambda ^{q/(p-1)}+\eta +a\leqq \lambda .
\label{flo}
\end{equation}%
When $Q<p-1$ and $q<p-1,$ (\ref{flo}) holds for $\lambda $ large enough. In
the other cases, we note that it holds as soon as
\begin{equation}
b_{1}\lambda ^{Q/(p-1)-1}+b_{2}\lambda ^{q/(p-1)-1}\leqq 1/2,\text{\quad and
}\eta \leqq \lambda /4,\text{ }a\leqq \lambda /4.  \label{flou}
\end{equation}%
First suppose that $Q>p-1$ or $q>p-1.$ We take $\lambda \leqq 1,$ small
enough so that $(b_{1}^{Q/(p-1)}+b_{2}^{q/(p-1)})\lambda ^{\max
(Q,q)/(p-1)-1}\leqq 1/2,$ and then $\eta ,a\leqq \lambda /4$. Next suppose
for example that $Q=p-1>q,$ $a$ is arbitrary. If $b_{1}$ small enough, and $%
\eta ,a$ are arbitrary, then we obtain (\ref{flou}) for $\lambda $ large
enough.\medskip

\noindent \textbf{(ii) Convergence: }Since $M(u_{n})\leqq \lambda ,$ in turn
from (\ref{jil}), $(H(x,u_{n},\nabla u_{n}))$ is bounded in $L^{1}(\Omega ),$
and then also $\Phi _{n}(u_{n})$. Thus
\begin{equation*}
\int_{\Omega }\left\vert \Phi _{n-1}(u_{n-1})\right\vert dx+\left\vert \mu
_{n}\right\vert (\Omega )\leqq C_{\lambda }:=b_{1}\lambda
^{Q/(p-1)}+b_{2}\lambda M^{q/(p-1)}+\eta +4\left\vert \mu \right\vert
(\Omega ).
\end{equation*}%
From Lemma \ref{ldmop}, up to a subsequence, $(u_{n})$ converges $a.e.$ to a
function $u,$ $\left( \nabla u_{n}\right) $ converges $a.e.$ to $\nabla u,$
and $\left( u_{n}^{p-1}\right) $ converges strongly in $L^{\sigma }(\Omega
), $ for any $\sigma \in \left[ 1,N/(N-p)\right) ,$ and finally $\left(
\left\vert \nabla u_{n}\right\vert ^{p-1}\right) $ converges strongly in $%
L^{\tau }(\Omega ),$ for any $\tau \in \left[ 1,N/(N-1)\right) .$ Therefore $%
(u_{n}^{Qr^{\prime }})$ and $(\left\vert \nabla u_{n}\right\vert
^{qr^{\prime }})$ converge strongly in $L^{1}(\Omega ),$ in turn $\left(
\Phi _{n}(x,u_{n},\nabla u_{n})\right) $ converges strongly to $H(x,u,\nabla
u)$ in $L^{1}(\Omega ).$ Then $(\Phi _{n}(x,u_{n},\nabla u_{n})+\mu _{n})$
is a sequence of good approximations of $H(x,u,\nabla u)+\mu .$ From Theorem %
\ref{fund}, $u$ is a R-solution of problem (\ref{PR}).
\end{proof}

\begin{remark}
Our proof is not based on the Schauder fixed point theorem, so we do not
need that $1\leqq Qr^{\prime }$ or $1\leqq qs^{\prime }.$ Hence we improve
the former result of \cite{HaBi} for problem (\ref{pbu}) where $H$ only
depends on $u$, proved for $1\leqq Qr^{\prime },$ implying $1<Q_{c}.$ Here
we have no restriction on $Q_{c}$ and $q_{c}.$
\end{remark}

Next we consider the case where $H$ and $\mu $ are nonnegative; then we do
not need that the data are small:

\begin{theorem}
\label{posi}Consider the problem (\ref{PR})%
\begin{equation}
-\Delta _{p}u+H(x,u,\nabla u)=\mu \hspace{0.5cm}\text{in }\Omega ,\qquad u=0%
\hspace{0.5cm}\text{on }\partial \Omega ,  \label{P1}
\end{equation}%
where $\mu \in \mathcal{M}_{b}^{+}(\Omega ),$ and
\begin{equation}
0\leqq H(x,u,\xi )\leqq C(\left\vert u\right\vert ^{Q}+\left\vert \xi
\right\vert ^{q})+\ell (x),  \label{bic}
\end{equation}%
with $0<Q<Q_{c},0<q<q_{c},C>0,\ell \in L^{1}(\Omega ).$ Then there exists a
nonnegative R-solution of problem (\ref{P1}).
\end{theorem}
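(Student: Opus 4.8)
The plan is to follow the same approximation-and-compactness scheme as in the proof of Theorem \ref{subc}, but to exploit the sign conditions $H\geqq 0$ and $\mu\geqq 0$ to obtain the needed $L^1$-bound on $H(x,u_n,\nabla u_n)$ for \emph{free}, i.e. without any smallness assumption. First I would take, by Lemma \ref{app}(i), a nondecreasing sequence $(\mu_n)$ of good approximations of $\mu$ with $\mu_n\in\mathcal{M}_b^+(\Omega)\cap W^{-1,p'}(\Omega)$ and $|\mu_n|(\Omega)=\mu_n(\Omega)\leqq 4\mu(\Omega)$. I would truncate the nonlinearity: set
\begin{equation*}
H_n(x,u,\xi)=\frac{H(x,u,\xi)}{1+\tfrac1n H(x,u,\xi)},\qquad 0\leqq H_n\leqq \min\{H,n\},
\end{equation*}
which is again Caratheodory and, crucially, still nonnegative. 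Then, using a Schauder-type fixed point argument (or the monotone iteration used for Theorem \ref{subc}, but now with the cut-off making everything bounded) applied to the map $v\mapsto u$ where $-\Delta_p u=\mu_n-H_n(x,v,\nabla v)$, I would produce a weak solution $u_n\in W_0^{1,p}(\Omega)$ of
\begin{equation*}
-\Delta_p u_n+H_n(x,u_n,\nabla u_n)=\mu_n\quad\text{in }\Omega,\qquad u_n=0\quad\text{on }\partial\Omega.
\end{equation*}
Since $H_n$ is bounded and $\mu_n\in W^{-1,p'}(\Omega)$, existence of such $u_n$ is standard; moreover testing against suitable truncations and using the comparison principle gives $u_n\geqq 0$ (here one uses $\mu_n\geqq 0$ and $H_n\geqq 0$, so $-\Delta_p u_n\geqq -H_n\geqq$ ... more precisely one compares with $0$ after noting $H_n(x,u_n,\nabla u_n)\,\mathrm{sign}^-(u_n)\geqq 0$).

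The key new point is the uniform $L^1$ estimate on the absorption term. Testing the equation for $u_n$ with $T_1(u_n)$ (legitimate since $u_n\in W_0^{1,p}$ and $u_n\geqq 0$), and using that $\int_\Omega|\nabla u_n|^{p-2}\nabla u_n\cdot\nabla T_1(u_n)\,dx\geqq 0$, I get
\begin{equation*}
\int_{\{0\leqq u_n\leqq 1\}}H_n(x,u_n,\nabla u_n)\,dx\;\leqq\;\int_\Omega T_1(u_n)\,d\mu_n\;\leqq\;\mu_n(\Omega)\;\leqq\;4\mu(\Omega),
\end{equation*}
while on $\{u_n>1\}$ one uses $h(s)=\min\{s,1\}$-type test functions or the standard trick of testing with $1-1/(1+u_n)$ to control $\int_{\{u_n>1\}}H_n\,dx$ as well; altogether $\|H_n(x,u_n,\nabla u_n)\|_{L^1(\Omega)}\leqq C\mu(\Omega)$ uniformly in $n$. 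Consequently the right-hand sides $\mu_n-H_n(x,u_n,\nabla u_n)$ are uniformly bounded in $\mathcal{M}_b(\Omega)$, so Lemma \ref{ldmop}(ii) applies: up to a subsequence $u_n\to u$ a.e., $\nabla u_n\to\nabla u$ a.e., $T_k(u_n)\rightharpoonup T_k(u)$ weakly in $W_0^{1,p}(\Omega)$, $|u_n|^{p-1}\to|u|^{p-1}$ in $L^\sigma$ for $\sigma<N/(N-p)$ and $|\nabla u_n|^{p-1}\to|\nabla u|^{p-1}$ in $L^\tau$ for $\tau<N/(N-1)$; in particular $u\geqq 0$.

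Passing to the limit in the nonlinear term is where the subcriticality $Q<Q_c$, $q<q_c$ enters and is the main obstacle. The a.e. convergence gives $H_n(x,u_n,\nabla u_n)\to H(x,u,\nabla u)$ a.e.; to upgrade this to strong $L^1$ convergence I would establish equi-integrability via the growth bound (\ref{bic}): from the Marcinkiewicz estimates in Lemma \ref{ldmop}(i), $u_n$ is bounded in $L^{Q_c,\infty}(\Omega)$ and $\nabla u_n$ in $L^{q_c,\infty}(\Omega)$ with norms controlled by $\mu(\Omega)+\|H_n(x,u_n,\nabla u_n)\|_{L^1}\leqq C\mu(\Omega)$; since $Q<Q_c$ the family $(|u_n|^Q)$ is bounded in $L^{Q_c/Q,\infty}$ with $Q_c/Q>1$, hence equi-integrable, and likewise $(|\nabla u_n|^q)$ is equi-integrable because $q<q_c$, and $\ell\in L^1$. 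Therefore $(H_n(x,u_n,\nabla u_n))$ is dominated by an equi-integrable family, and Vitali's theorem yields $H_n(x,u_n,\nabla u_n)\to H(x,u,\nabla u)$ strongly in $L^1(\Omega)$, with in particular $H(x,u,\nabla u)\in L^1(\Omega)$. Then $(\mu_n-H_n(x,u_n,\nabla u_n))$ is a good approximation of $\mu-H(x,u,\nabla u)$ (the $\mu_n$ part is handled as in Lemma \ref{app}, the $H_n$ part converges strongly in $L^1$), so Theorem \ref{fund} identifies $u$ as a R-solution of $-\Delta_p u=\mu-H(x,u,\nabla u)$, i.e. a nonnegative R-solution of (\ref{P1}). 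I expect the delicate bookkeeping to be (a) checking that the approximating problems with bounded data and $W^{-1,p'}$ measure indeed have nonnegative weak solutions — straightforward but needs the comparison/truncation argument spelled out — and (b) verifying the equi-integrability precisely from the weak-Marcinkiewicz bounds, which is exactly the point where the subcritical exponents $Q_c,q_c$ are used.
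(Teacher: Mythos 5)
Your overall architecture coincides with the paper's: approximate $\mu$ by a good sequence $(\mu_n)$, solve the approximate problems, use the sign $H\geqq 0$ to get the uniform energy estimate $\frac1k\int_{\{m\leqq u_n\leqq m+k\}}|\nabla u_n|^p\,dx\leqq 4\mu(\Omega)$, invoke Lemma \ref{ldmop} for a.e.\ convergence of $u_n$ and $\nabla u_n$, use the subcritical exponents $Q<Q_c$, $q<q_c$ to pass to the limit in the absorption term in $L^1$ (Vitali), and conclude with Theorem \ref{fund}. The one genuinely different ingredient is the construction of the $u_n$: you truncate $H$ into $H_n$ and run a fixed-point argument, whereas the paper keeps $H$ intact and quotes the sub/supersolution theorem of Boccardo--Murat--Puel with the ordered pair $0\leqq u_n\leqq\psi_n$, $\psi_n$ being the solution of $-\Delta_p\psi_n=\mu_n$ with smooth data $\mu_n\in\mathcal{D}^+(\Omega)$ from Lemma \ref{app}(ii). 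The paper's route buys the nonnegativity and an $L^\infty$ bound of $u_n$ for free; your route must prove $u_n\geqq 0$ by comparison, and as you half-acknowledge this does not follow from $H\geqq 0$ and $\mu_n\geqq 0$ alone (if $H(x,0,0)=\ell(x)>0$ the right-hand side $\mu_n-H_n$ is a signed measure and testing with $-u_n^-$ gives $\int_{\{u_n<0\}}|\nabla u_n|^p\leqq\int H_n u_n^-\,dx$, which is not $\leqq 0$). Some extra structure on $H$ for $s\leqq 0$ is needed here; to be fair, the paper's claim that $0$ is a subsolution has exactly the same unstated requirement.

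Two technical slips in your estimates should be fixed. First, testing with $T_1(u_n)$ gives $\int_\Omega H_nT_1(u_n)\,dx\leqq\mu_n(\Omega)$, which controls $\int_{\{u_n\geqq 1\}}H_n\,dx$ and \emph{not} $\int_{\{0\leqq u_n\leqq 1\}}H_n\,dx$ (on the latter set $T_1(u_n)=u_n$ may be arbitrarily small); your two half-arguments both control the region where $u_n$ is bounded away from $0$, and neither controls the complementary set. The integral of $H_n$ over $\{u_n<1\}$ must instead come from the growth condition (\ref{bic}) together with $\int|\nabla T_1(u_n)|^p\leqq\mu_n(\Omega)$ and $q<q_c\leqq p$. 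Second, and relatedly, your Marcinkiewicz bounds are stated as "controlled by $\mu(\Omega)+\|H_n\|_{L^1}$'', which is circular if the $L^1$ bound is what you are trying to prove; the correct order is: the energy estimate (\ref{xyz}) (which uses only $H\geqq 0$) yields the $L^{Q_c,\infty}$ and $L^{q_c,\infty}$ decay for $u_n$ and $\nabla u_n$ in terms of $\mu(\Omega)$ alone, and then (\ref{bic}) with $Q<Q_c$, $q<q_c$ yields boundedness \emph{and} equi-integrability of $H_n(x,u_n,\nabla u_n)$ in one stroke. With that reordering your Vitali argument is exactly the paper's (the paper phrases it as strong $L^1$ convergence of $u_n^{Q}$ and $|\nabla u_n|^{q}$, which is the same thing), and the conclusion via Theorem \ref{fund} is sound.
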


\begin{proof}
We use the good approximation of $\mu $ by a sequence of measures $\mu
_{n}=\mu _{n}^{0}+\lambda _{n},$ with $\mu _{n}^{0}\in \mathcal{D}^{+}\left(
\Omega \right) ,\lambda _{n}\in C_{b}^{+}(\Omega ),$ given at Lemma \ref{app}
(ii). Then there exists a weak nonnegative solution $u_{n}\in
W_{0}^{1,p}\left( \Omega \right) $ of the problem
\begin{equation*}
-\Delta _{p}u_{n}+H(x,u_{n},\nabla u_{n})=\mu _{n}\hspace{0.5cm}\text{in }%
\Omega ,\qquad u_{n}=0\quad \text{on }\partial \Omega .
\end{equation*}%
Indeed 0 is a subsolution, and the solution $\psi _{n}\in W_{0}^{1,p}(\Omega
)$ of $-\Delta _{p}\psi _{n}=\mu _{n}$ in $\Omega ,$ is a supersolution.
Since $\mu _{n}\in L^{\infty }(\Omega ),$ there holds $\psi \in C^{1,\alpha
}(\overline{\Omega })$ for some $\alpha \in \left( 0,1\right) $, thus $\psi
\in W^{1,\infty }(\Omega ).$ From \cite[Theorem 2.1]{BoMuPu}, since $%
Q_{c}\leqq p,$ there exists a weak solution $u_{n}\in W_{0}^{1,p}(\Omega ),$
such that $0\leq u_{n}\leq \psi _{n},$ hence $u_{n}\in L^{\infty }(\Omega ),$
and $u_{n}\in W_{loc}^{1,r}(\Omega )$ for some $r>p.$ Taking $\varphi
=k^{-1}T_{k}(u_{n}-m)$ with $m\geq 0,$ $k>0,$ as a test function, we get
from (\ref{unif})
\begin{equation}
\frac{1}{k}\int_{\left\{ m\leqq u\leqq m+k\right\} }\left\vert \nabla
u_{n}\right\vert ^{p}dx\leq \mu _{n}(\Omega )\leq 4\mu (\Omega ),
\label{xyz}
\end{equation}%
then from Lemma \ref{ldmop}, up to a subsequence, $\left( u_{n}\right) $
converges $a.e.$ to a function $u,$ $\left( T_{k}(u_{n})\right) $ converges
weakly in $W_{0}^{1,p}(\Omega )$and $\left( \nabla u_{n}\right) $ converges $%
a.e.$ to $\nabla u,$ and $\left( \left\vert \nabla u_{n}\right\vert
^{p}\right) ,$ $\left( u_{n}^{p-1}\right) $ converges strongly in $L^{\sigma
}(\Omega )$ for any $\sigma \in \left[ 1,N/(N-p)\right) ,$ $\left(
\left\vert \nabla u_{n}\right\vert ^{p-1}\right) $ converges strongly in $%
L^{\tau }(\Omega ),$ for any $\tau \in \left[ 1,N/(N-1)\right) .$ Then $%
(u_{n}^{Qr^{\prime }})$ and $(\left\vert \nabla u_{n}\right\vert
^{qr^{\prime }})$ converge strongly in $L^{1}(\Omega ),$ in turn $\left(
H(x,u_{n},\nabla u_{n})\right) $ converges strongly to $H(x,u,\nabla u)$ in $%
L^{1}(\Omega ).$ Applying Theorem \ref{fund} to $\mu _{n}-H(x,u_{n},\nabla
u_{n})$ as above, we still obtain that $u$ is a R-solution of (\ref{P1}).
\end{proof}

\section{Necessary conditions for existence and removability results}

Let $\mu \in \mathcal{M}(\Omega ).$ We consider the local solutions of
\begin{equation}
-\Delta _{p}u+H(x,u,\nabla u)=\mu \qquad \text{in }\Omega ,  \label{eqmu}
\end{equation}%
We say that $u$ is a \textbf{weak solution} of (\ref{eqmu}) if $u$ is
measurable and finite a.e. in $\Omega $, $T_{k}(u)\in W_{loc}^{1,p}(\Omega )$
for any $k>0,$ $H(x,u,\nabla u)\in L_{loc}^{1}(\Omega )$ and (\ref{eqmu})
holds in $\mathcal{D}^{\prime }(\Omega ).$ We say that $u$ is a \textbf{%
LR-solution }of (\ref{eqmu}) if $T_{k}(u)\in W_{loc}^{1,p}(\Omega )$ for any
$k>0,$ and $\left\vert \nabla u\right\vert ^{q}\in L_{loc}^{1}(\Omega )$ and
$u$ is a LR-solution of%
\begin{equation*}
-\Delta _{p}u=\mu -H(x,u,\nabla u),\qquad \text{in }\Omega .
\end{equation*}%
\textit{\ }

\begin{remark}
\label{gre} If $q\geqq 1$ and $u$ is a weak solution, then $u$ satisfies (%
\ref{ugc}), see for example \cite[Lemma 2.2 and 2.3]{Le}, thus $u\in
W_{loc}^{1,q}(\Omega ).$
\end{remark}

\begin{lemma}
\label{much}Let $\mu \in \mathcal{M}(\Omega ).$ Assume that (\ref{eqmu})
admits a weak solution $u.$

(i) If
\begin{equation}
\left\vert H(x,u,\xi )\right\vert \leqq C_{1}\left\vert \xi \right\vert
^{q}+\ell (x)  \label{mis}
\end{equation}%
with $C_{1}>0$ and $\ell \in L^{1}(\Omega ),$ then setting $%
C_{2}=C_{1}+q_{\ast }-1$, for any $\zeta \in \mathcal{D}^{+}(\Omega ),$
\begin{equation}
\left\vert \int_{\Omega }\zeta ^{q_{\ast }}d\mu \right\vert \leq
C_{2}\int_{\Omega }\left\vert \nabla u\right\vert ^{q}\zeta ^{q_{\ast
}}dx+\int_{\Omega }\left\vert \nabla \zeta \right\vert ^{q_{\ast
}}dx+\int_{\Omega }\ell \zeta ^{q_{\ast }}dx.  \label{fir}
\end{equation}

(ii) If $H$ has a constant sign, and
\begin{equation}
C_{0}\left\vert \xi \right\vert ^{q}-\ell (x)\leqq \left\vert H(x,u,\xi
)\right\vert ,  \label{mus}
\end{equation}%
then for some $C=C(C_{0},p,q),$%
\begin{equation}
\int_{\Omega }\left\vert \nabla u\right\vert ^{q}\zeta ^{q_{\ast }}dx\leqq
C(\left\vert \int_{\Omega }\zeta ^{q_{\ast }}d\mu \right\vert +\int_{\Omega
}\left\vert \nabla \zeta \right\vert ^{q_{\ast }}dx+\int_{\Omega }\ell \zeta
^{q_{\ast }}dx)  \label{sec}
\end{equation}
\end{lemma}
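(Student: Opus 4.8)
The plan is to test the weak equation~(\ref{eqmu}) against a test function of the form $\varphi=h(u)\zeta^{q_{\ast}}$, where $\zeta\in\mathcal{D}^{+}(\Omega)$ and $h$ is a bounded Lipschitz function (typically $h(s)=1-(1+|s|)^{-\delta}\,\mathrm{sgn}(s)$ or a truncation thereof), and then let $h\to\mathrm{sgn}$ or $h\to 1$ in an appropriate sense. The exponent $q_{\ast}=q/(q+1-p)$ is precisely the one for which the term coming from $|\nabla u|^{p-2}\nabla u\cdot\nabla(\zeta^{q_{\ast}})$ can be absorbed by a Young inequality against $|\nabla u|^{q}\zeta^{q_{\ast}}$: indeed $\zeta^{q_{\ast}-1}|\nabla\zeta|$ paired with $|\nabla u|^{p-1}$ splits, via Young with exponents $q/(p-1)$ and $q_{\ast}$, into a small multiple of $|\nabla u|^{q}\zeta^{q_{\ast}}$ plus a multiple of $|\nabla\zeta|^{q_{\ast}}$. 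So the two constants $C_{2}=C_{1}+q_{\ast}-1$ and $C=C(C_0,p,q)$ are exactly what one gets by keeping track of the Young constants.

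**Step (i).** First I would justify that $\varphi=h(u)\zeta^{q_{\ast}}$ is an admissible test function: by Remark~\ref{gre} (for $q\geq1$) or directly since $T_k(u)\in W^{1,p}_{loc}$ and $h$ is bounded Lipschitz with $h'$ compactly supported, one has $h(u)\in W^{1,p}_{loc}(\Omega)\cap L^{\infty}$, so $\varphi\in W^{1,p}_{loc}$ with compact support. Plugging into~(\ref{eqmu}):
\begin{equation*}
\int_{\Omega}|\nabla u|^{p-2}\nabla u\cdot\nabla u\,h'(u)\zeta^{q_{\ast}}dx+q_{\ast}\int_{\Omega}|\nabla u|^{p-2}\nabla u\cdot\nabla\zeta\,h(u)\zeta^{q_{\ast}-1}dx+\int_{\Omega}H(x,u,\nabla u)h(u)\zeta^{q_{\ast}}dx=\int_{\Omega}h(u)\zeta^{q_{\ast}}d\mu.
\end{equation*}
The first term is $\int h'(u)|\nabla u|^{p}\zeta^{q_{\ast}}\geq0$ when $h$ is nondecreasing; choosing $h$ so that $h(\pm\infty)=\pm1$ and $0\le h'$, this term is controlled. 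Choosing $h\to\mathrm{sgn}$, and using $|h|\le1$, $|H|\le C_1|\nabla u|^q+\ell$ on the third term, and Young's inequality ($|\nabla u|^{p-1}\zeta^{q_{\ast}-1}|\nabla\zeta|\le \varepsilon|\nabla u|^{q}\zeta^{q_{\ast}}+C_\varepsilon|\nabla\zeta|^{q_{\ast}}$, with the natural choice of $\varepsilon$ giving the factor $q_{\ast}-1$) on the second term, I obtain~(\ref{fir}). Care is needed to pass the limit $h\to\mathrm{sgn}$ under the integral sign against $d\mu$: here one uses that $T_k(u)\in W^{1,p}_{loc}$ gives $u$ finite a.e., so $h(u)\to\mathrm{sgn}(u)$ pointwise and dominated convergence applies to all the finite integrals; the only delicate point is the sign term $\int h'(u)|\nabla u|^p\zeta^{q_{\ast}}$, which one simply discards by nonnegativity rather than evaluates.

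**Step (ii).** For the reverse inequality, I would use the lower bound $C_0|\nabla u|^q-\ell\le|H|$ together with the fact that $H$ has a constant sign, so that $H\,h(u)$ has a sign once $h$ is chosen with the matching sign (e.g. $h\ge0$ if $H\ge0$, using $h$ with $h(+\infty)=1$, or $h\le0$ if $H\le0$). Then from the same identity, moving the $\mu$-term and the gradient cross-term to the right and keeping the $H$-term on the left with its favorable sign, one gets $C_0\int|\nabla u|^q|h(u)|\zeta^{q_{\ast}}\le |\int h(u)\zeta^{q_{\ast}}d\mu|+q_{\ast}\int|\nabla u|^{p-1}\zeta^{q_{\ast}-1}|\nabla\zeta|+\int\ell|h(u)|\zeta^{q_{\ast}}$, and after Young's inequality on the cross-term (now absorbing a small multiple of $\int|\nabla u|^q|h(u)|\zeta^{q_{\ast}}$ from the right — here one needs $|h(u)|$ and $1$ to be comparable where $|\nabla u|$ is large, which is arranged by choosing $h$ bounded below away from $0$ on $\{|s|\ge1\}$, then splitting the region $\{|u|\le1\}$ where $|\nabla u|^q\le 1+|\nabla u|^p\chi_{\{|u|\le1\}}$ is harmless) and then letting $h\to\mathrm{sgn}$, the estimate~(\ref{sec}) follows with $C$ depending only on $C_0,p,q$.

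**Main obstacle.** The routine part is the Young-inequality bookkeeping that produces the precise constants; the genuinely delicate point is the limit passage $h\to\mathrm{sgn}$ (resp. $h\to 1$) in the measure term $\int h(u)\zeta^{q_{\ast}}d\mu$ when $\mu$ is merely a Radon measure that may have a singular part and $u$ may be unbounded. One must argue that $|\nabla u|^q\zeta^{q_{\ast}}\in L^1_{loc}$ (which is part of the weak-solution hypothesis, since $H(x,u,\nabla u)\in L^1_{loc}$ and $|H|\ge C_0|\nabla u|^q-\ell$ in case (ii), or which is assumed outright via~(\ref{ugc})/Remark~\ref{gre} in case (i)), so that the first term $\int h'(u)|\nabla u|^p\zeta^{q_{\ast}}$ — while not obviously integrable for general $h$ — can be made to vanish or stay nonnegative by choosing the $h_\delta$ with $h_\delta'\ge 0$ supported where the $p$-energy of $u$ over level sets is controlled by Lemma~\ref{ldmop}(i). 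Once the a.e.\ convergence $h(u)\to\mathrm{sgn}(u)$ and the uniform bound $|h|\le1$ are in hand, dominated convergence finishes the $\mu$-integral, and all the other terms pass to the limit by the same domination.
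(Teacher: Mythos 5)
Your computational core --- Young's inequality with the conjugate exponents $q/(p-1)$ and $q_{\ast}$ applied to $|\nabla u|^{p-1}\zeta^{q_{\ast}-1}|\nabla\zeta|$, producing $\varepsilon(q_{\ast}-1)\int|\nabla u|^{q}\zeta^{q_{\ast}}dx+\varepsilon^{1-q_{\ast}}\int|\nabla\zeta|^{q_{\ast}}dx$ --- is exactly the paper's, and it is the entire analytic content of the lemma. The gap is in the test-function machinery you wrap around it. The lemma concerns a \emph{weak} (distributional) solution, so the equation is only known against $\varphi\in\mathcal{D}(\Omega)$ and, by density, against $C^{1}$ functions with compact support such as $\zeta^{q_{\ast}}$ itself. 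The function $h(u)\zeta^{q_{\ast}}$ is not in this class: $u$ is only a Lebesgue-class function, so $h(u)$ has no canonical $\mu$-measurable representative when $\mu$ carries a singular part, and the pairing $\int_{\Omega}h(u)\zeta^{q_{\ast}}d\mu$ is simply undefined. Giving it a meaning (through the decomposition $\mu=\mu_{0}+\mu_{s}$ and the values $h(\pm\infty)$) is precisely the content of the renormalized formulation, which is not among the hypotheses here. Moreover, even granting that formulation, your limit $h\to\mathrm{sgn}$ yields $\int\mathrm{sgn}(u)\zeta^{q_{\ast}}d\mu$ on the measure side, which is not the quantity $\int\zeta^{q_{\ast}}d\mu$ appearing in (\ref{fir}); the alternative $h\to 1$ forces $h'\to 0$, at which point the truncation has accomplished nothing. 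Finally, discarding the nonnegative term $\int h'(u)|\nabla u|^{p}\zeta^{q_{\ast}}dx$ gives only a one-sided inequality, whereas (\ref{fir}) bounds an absolute value.

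The paper's proof avoids all of this: it tests directly with $\zeta^{q_{\ast}}$ (admissible by density since $H(x,u,\nabla u)$ and $|\nabla u|^{p-1}$ belong to $L^{1}_{loc}(\Omega)$ and $\mu$ pairs with continuous compactly supported functions), obtaining the identity $\int_{\Omega}\zeta^{q_{\ast}}d\mu=\int_{\Omega}H(x,u,\nabla u)\zeta^{q_{\ast}}dx+q_{\ast}\int_{\Omega}|\nabla u|^{p-2}\nabla u\cdot\zeta^{q_{\ast}-1}\nabla\zeta\,dx$, and then applies the Young inequality above with $\varepsilon=1$ for part (i) (giving $C_{2}=C_{1}+q_{\ast}-1$) and with $\varepsilon$ small, so as to absorb into $C_{0}\int|\nabla u|^{q}\zeta^{q_{\ast}}dx$, for part (ii). If you delete the factor $h(u)$ from your argument, what remains is essentially this proof.
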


\begin{proof}
By density, we can take $\zeta ^{q_{\ast }}$ as a test function, and get%
\begin{equation*}
\int_{\Omega }\zeta ^{q_{\ast }}d\mu =-\int_{\Omega }H(x,u,\nabla u)\zeta
^{q_{\ast }}dx+q_{\ast }\int_{\Omega }\left\vert \nabla u\right\vert
^{p-2}\nabla u.\zeta ^{q_{\ast }-1}\nabla \zeta dx;
\end{equation*}%
and from the H\"{o}lder inequality, for any $\varepsilon >0,$
\begin{equation}
q_{\ast }\int_{\Omega }\left\vert \nabla u\right\vert ^{p-1}\zeta ^{q_{\ast
}-1}\left\vert \nabla \zeta \right\vert dx\leqq (q_{\ast }-1)\varepsilon
\int_{\Omega }\left\vert \nabla u\right\vert ^{q}\zeta ^{q_{\ast
}}dx+\varepsilon ^{1-q_{\ast }}\int_{\Omega }\left\vert \nabla \zeta
\right\vert ^{q_{\ast }}dx  \label{eps}
\end{equation}%
which implies (\ref{fir}). If $H$ has a constant sign, then
\begin{eqnarray*}
C_{0}\int_{\Omega }\left\vert \nabla u\right\vert ^{q}\zeta ^{q_{\ast
}}dx-\int_{\Omega }\ell dx &\leqq &\int_{\Omega }\left\vert H(x,u,\nabla
u)\right\vert \zeta ^{q_{\ast }}dx=\left\vert \int_{\Omega }H(x,u,\nabla
u)\zeta ^{q_{\ast }}dx\right\vert  \\
&\leqq &\left\vert \int_{\Omega }\zeta ^{q_{\ast }}d\mu \right\vert +q_{\ast
}\int_{\Omega }\left\vert \nabla u\right\vert ^{p-1}\zeta ^{q_{\ast
}-1}\left\vert \nabla \zeta \right\vert dx,
\end{eqnarray*}%
thus (\ref{sec}) follows after taking $\varepsilon $ small enough.
\end{proof}

\begin{proposition}
\label{nec}Let $\mu \in \mathcal{M}(\Omega ),$ and assume that (\ref{eqmu})
admits a weak solution $u.\medskip $

(i) If (\ref{mis}) holds, then $\mu \in \mathcal{M}^{q_{\ast }}(\Omega
).\medskip $

(ii) If $H(x,u,\xi )\leqq -C_{0}\left\vert \xi \right\vert ^{q}$ and $\mu $
and $u$ are nonnegative, then in addition there exists $C=C(C_{0},p,q)>0$
such that for any compact $K\subset \Omega ,$
\begin{equation}
\mu (K)\leqq Ccap_{1,q_{\ast }}(K,\Omega ).  \label{capa}
\end{equation}
\end{proposition}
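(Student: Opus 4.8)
For part (i), the plan is to read the equation as $-\Delta _{p}u=\mu -H(x,u,\nabla u)$ in $\mathcal{D}^{\prime }(\Omega )$ and to exhibit $\mu $, locally, in the form covered by Remark \ref{imp}. Since $u$ is a weak solution, $H(x,u,\nabla u)\in L_{loc}^{1}(\Omega )$; and under (\ref{mis}) a weak solution enjoys the regularity (\ref{ugc}), with in fact $|\nabla u|^{q}\in L_{loc}^{1}(\Omega )$ — this integrability is already implicit in the estimate (\ref{fir}), and is recalled in Remark \ref{gre} when $q\geqq 1$. Because $q_{\ast }^{\prime }=q/(p-1)$, so that $(p-1)q_{\ast }^{\prime }=q$, the field $|\nabla u|^{p-2}\nabla u$ then belongs to $(L_{loc}^{q_{\ast }^{\prime }}(\Omega ))^{N}$. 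Fixing an arbitrary ball $B\Subset \Omega $ and restricting the equation to $B$ writes $\mu \llcorner B=f_{B}+\mathrm{div}\,g_{B}$ in $\mathcal{D}^{\prime }(B)$ with $f_{B}=-H(x,u,\nabla u)\chi _{B}\in L^{1}(B)$ and $g_{B}=|\nabla u|^{p-2}\nabla u\,\chi _{B}\in (L^{q_{\ast }^{\prime }}(B))^{N}$, so Remark \ref{imp} (\cite[Theorem 2.1]{BoGaOr}) gives $\mu \llcorner B\in \mathcal{M}^{q_{\ast }}(B)$. Since not charging sets of zero $cap_{1,q_{\ast }}$-capacity is a local property, exhausting $\Omega $ by such balls finishes. (An alternative I would keep in reserve is to argue directly from (\ref{fir}): insert a sequence of cutoffs $\zeta _{n}$ of a compact set of zero $q_{\ast }$-capacity with $0\leqq \zeta _{n}\leqq 1$ and $\Vert \zeta _{n}\Vert _{W^{1,q_{\ast }}}\rightarrow 0$, and pass to the limit by dominated convergence in $\int |\nabla u|^{q}\zeta _{n}^{q_{\ast }}$ and $\int \ell \zeta _{n}^{q_{\ast }}$; the one delicate point there is the bookkeeping between $\mu ^{+}$ and $\mu ^{-}$, which the first route avoids entirely.)

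For part (ii) I would not use (\ref{mis}) at all — there is only a lower bound on $|H|$ available — but test the equation directly, using the gradient term as a source. Here $H(x,u,\xi )\leqq -C_{0}|\xi |^{q}\leqq 0$, hence $-H=|H|\geqq C_{0}|\nabla u|^{q}\geqq 0$ and $-\Delta _{p}u=\mu +|H|\geqq 0$ in $\mathcal{D}^{\prime }(\Omega )$. Given a compact $K\subset \Omega $ and $\zeta \in \mathcal{D}(\Omega )$ with $\chi _{K}\leqq \zeta \leqq 1$, testing with $\zeta ^{q_{\ast }}$ (legitimate by exactly the density argument used in the proof of Lemma \ref{much}) gives
\begin{equation*}
\int_{\Omega }\zeta ^{q_{\ast }}d\mu +\int_{\Omega }|H|\zeta ^{q_{\ast }}dx=q_{\ast }\int_{\Omega }|\nabla u|^{p-2}\nabla u\cdot \zeta ^{q_{\ast }-1}\nabla \zeta \,dx\leqq q_{\ast }\int_{\Omega }|\nabla u|^{p-1}\zeta ^{q_{\ast }-1}|\nabla \zeta |\,dx.
\end{equation*}
The key observation is that the left-hand side carries the coercive quantity $C_{0}\int_{\Omega }|\nabla u|^{q}\zeta ^{q_{\ast }}dx$ coming from $|H|$, while the right-hand side, since $q>p-1$ (equivalently $q_{\ast }\in (1,\infty )$), is dominated via Young's inequality with conjugate exponents $q/(p-1)$ and $q_{\ast }$ by $\varepsilon \int_{\Omega }|\nabla u|^{q}\zeta ^{q_{\ast }}dx+C(\varepsilon ,p,q)\int_{\Omega }|\nabla \zeta |^{q_{\ast }}dx$. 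Choosing $\varepsilon =C_{0}/2$ and using $\mu \geqq 0$ I obtain $\int_{\Omega }\zeta ^{q_{\ast }}d\mu \leqq C(C_{0},p,q)\int_{\Omega }|\nabla \zeta |^{q_{\ast }}dx$; since $\chi _{K}\leqq \zeta ^{q_{\ast }}$ and $\mu \geqq 0$ this is $\mu (K)\leqq C\int_{\Omega }|\nabla \zeta |^{q_{\ast }}dx\leqq C\Vert \zeta \Vert _{W_{0}^{1,q_{\ast }}(\Omega )}^{q_{\ast }}$, and minimizing over the admissible $\zeta $ yields (\ref{capa}).

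I expect the only genuinely delicate point to be in (i), namely the local integrability $|\nabla u|^{q}\in L_{loc}^{1}(\Omega )$ — equivalently $|\nabla u|^{p-2}\nabla u\in (L_{loc}^{q_{\ast }^{\prime }}(\Omega ))^{N}$ — which is where the subcritical nature of the problem is really used, through the gradient regularity (\ref{ugc}) of weak solutions under the growth bound (\ref{mis}). In (ii) there is no comparable obstacle: once the admissibility of $\zeta ^{q_{\ast }}$ as test function is granted (the same density argument as in Lemma \ref{much}), the estimate reduces to the single application of Young's inequality sketched above, the sign conditions on $H$ and $\mu $ doing the rest.
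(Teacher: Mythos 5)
Your part (ii) is essentially the paper's proof: the same test function $\zeta ^{q_{\ast }}$, the same use of the sign of $H$ to produce the coercive term $C_{0}\int \left\vert \nabla u\right\vert ^{q}\zeta ^{q_{\ast }}dx$, the same Young inequality (the paper's (\ref{eps})) to absorb the right-hand side, and the same minimization over admissible $\zeta $ (which the paper delegates to a citation of Maz'ya). Part (i), however, takes a genuinely different route. The paper argues directly from (\ref{fir}): it fixes a Borel set $E$ of zero $cap_{1,q_{\ast }}$-capacity, performs a Hahn decomposition $\Omega =A\cup B$, and for a compact $K\subset A\cap E$ chooses a neighbourhood $\omega $ with $\mu ^{-}(\omega )<\delta $ and capacity cutoffs $\zeta _{n}\rightarrow 0$ in $W^{1,q_{\ast }}$, concluding $\mu ^{+}(K)\leqq 2\delta $ by dominated convergence. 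You instead localize the equation to a ball $B\subset \subset \Omega $, note that $\mu \llcorner B=H\chi _{B}-{div}(\left\vert \nabla u\right\vert ^{p-2}\nabla u\,\chi _{B})$ lies in $L^{1}(B)+{div}\,(L^{q_{\ast }^{\prime }}(B))^{N}$ because $(p-1)q_{\ast }^{\prime }=q$, and invoke the Boccardo--Gallouet--Orsina characterization of Remark \ref{imp}. (Minor sign slip: from $-\Delta _{p}u+H=\mu $ one gets $f_{B}=H\chi _{B}$ and $g_{B}=-\left\vert \nabla u\right\vert ^{p-2}\nabla u\,\chi _{B}$, not the opposite signs; this is immaterial for the memberships you need.) Your route buys a cleaner treatment of signed measures --- the bookkeeping between $\mu ^{+}$ and $\mu ^{-}$, which you rightly identify as the fiddly part of the direct argument, is absorbed into the quoted theorem --- at the cost of two small verifications worth making explicit: that $\mu \llcorner B$ is a bounded measure (true, since $\mu $ is Radon and $\overline{B}\subset \Omega $ is compact, so Remark \ref{imp} applies), and that the null sets of $cap_{1,q_{\ast }}(\cdot ,\omega )$ do not depend on the ambient open set $\omega $ (standard, via the equivalence with the Bessel capacity recalled in Section 2). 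Both proofs rest on the same hypothesis, which you correctly single out as the delicate point: $\left\vert \nabla u\right\vert ^{q}\in L_{loc}^{1}(\Omega )$, equivalently $\left\vert \nabla u\right\vert ^{p-1}\in L_{loc}^{q_{\ast }^{\prime }}(\Omega )$. The paper needs this as well, to dominate $\int \left\vert \nabla u\right\vert ^{q}\zeta _{n}^{q_{\ast }}dx$ in its convergence step, and covers it only through Remark \ref{gre}; so you are not assuming more than the authors do.
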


\begin{proof}
(i) Let $E$ be a Borel set such that $cap_{1,q_{\ast }}(E,\Omega )=0.$ There
exist two measurable disjoint sets $A,B$ such that $\Omega =A\cup B$ and $%
\mu ^{+}(B)=\mu ^{-}(A)=0.$ Let us show that $\mu ^{+}(A\cap E)=0.$ Let $K$
be any fixed compact set in $A\cap E.$ Since $\mu ^{-}(K)=0,$ for any $%
\delta >0$ there exists a regular domain $\omega \subset \subset \Omega $
containing $K,$ such that $\mu ^{-}(\omega )<\delta .$ Then there exists $%
\zeta _{n}\in \mathcal{D}(\omega )$ such that $0\leq \zeta _{n}\leq 1,$ and $%
\zeta _{n}=1$ on a neighborhood of $K$ contained in $\omega ,$ and $\left(
\zeta _{n}\right) $ converges to in $W^{1,q_{\ast }}(\mathbb{R}^{N})$ and $%
a.e.$ in $\Omega $, see \cite{AdPo}. There holds
\begin{equation*}
\mu ^{+}(K)\leq \int_{\omega }\zeta _{n}^{q_{\ast }}d\mu ^{+}=\int_{\omega
}\zeta _{n}^{q_{\ast }}d\mu +\int_{\omega }\zeta _{n}^{q_{\ast }}d\mu
^{-}\leq \int_{\omega }\zeta _{n}^{q_{\ast }}d\mu +\delta
\end{equation*}%
and from (\ref{fir}),%
\begin{equation*}
\left\vert \int_{\Omega }\zeta _{n}^{q_{\ast }}d\mu \right\vert \leq
C_{2}\int_{\Omega }\left\vert \nabla u\right\vert ^{q}\zeta _{n}^{q_{\ast
}}dx+\int_{\Omega }\left\vert \nabla \zeta _{n}\right\vert ^{q_{\ast
}}dx+\int_{\Omega }\ell \zeta _{n}^{q_{\ast }}dx
\end{equation*}%
And $\lim_{n\rightarrow \infty }\int_{\Omega }\left\vert \nabla u\right\vert
^{q}\zeta _{n}^{q_{\ast }}dx=0,$ from the dominated convergence theorem,
thus $\left\vert \int_{\Omega }\zeta _{n}^{q_{\ast }}d\mu \right\vert \leq
\delta $ for large $n;$ then $\mu ^{+}(K)\leq 2\delta $ for any $\delta >0,$
thus $\mu ^{+}(K)=0,$ hence $\mu ^{+}(A\cap E)=0;$ similarly we get $\mu
^{-}(B\cap E)=0,$ hence $\mu (E)=0.$

\noindent (ii) Here we find
\begin{equation*}
\int_{\Omega }\zeta ^{q_{\ast }}d\mu +C_{0}\int_{\Omega }\left\vert \nabla
u\right\vert ^{q}\zeta ^{q_{\ast }}dx\leqq q_{\ast }\int_{\Omega }\left\vert
\nabla u\right\vert ^{p-2}\nabla u.\zeta ^{q_{\ast }-1}\nabla \zeta dx
\end{equation*}%
and hence from (\ref{eps}) with $\varepsilon >0$ small enough, for some $%
C=C(C_{0},p,q),$
\begin{equation*}
\int_{\Omega }\zeta ^{q_{\ast }}d\mu \leq C\int_{\Omega }\left\vert \nabla
\zeta \right\vert ^{q_{\ast }}dx
\end{equation*}%
and (\ref{capa}) follows, see \cite{Maz}.
\end{proof}

\begin{remark}
Property (ii) extends the results of \cite{HaMaVe} and \cite[Theorem 3.1]{Ph}
for equation (\ref{sou}).
\end{remark}

Next we show a removability result:

\begin{theorem}
\label{remo}Assume that $H$ has a constant sign and satisfies (\ref{mis})
and (\ref{mus}). Let $F$ be any relatively closed subset of $\Omega ,$ such
that $cap_{1,q_{\ast }}(F,\mathbb{R}^{N})=0$, and $\mu \in \mathcal{M}%
^{q_{\ast }}(\Omega )$.

(i) Let $1<q\leqq p.$ Let $u$ be any LR-solution of
\begin{equation}
-\Delta _{p}u+H(x,u,\nabla u)=\mu \qquad \text{in }\Omega \backslash K
\label{dra}
\end{equation}%
Then $u$ is a LR-solution of%
\begin{equation}
-\Delta _{p}u+H(x,u,\nabla u)=\mu \qquad \text{in }\Omega .  \label{dre}
\end{equation}%
(ii) Let $q>p$ and $u$ be a weak solution of (\ref{dra}), then $u$ is a weak
solution of (\ref{dre}).
\end{theorem}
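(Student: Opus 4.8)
\textbf{Proof plan for Theorem \ref{remo}.}

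The plan is to reduce the removability statement to a quantitative estimate on the solution near $F$, exactly as in Lemma \ref{much}, and then to use the vanishing of the $q_\ast$-capacity of $F$ to show that cutting off near $F$ with suitable test functions $\zeta_n$ produces negligible error terms, so that the renormalized (resp. weak) formulation valid in $\Omega\setminus F$ survives in the limit as a formulation in all of $\Omega$. First I would fix a relatively compact open set $\omega\subset\subset\Omega$ and work there; since $cap_{1,q_\ast}(F,\mathbb{R}^N)=0$, I can pick a sequence $\zeta_n\in\mathcal{D}(\mathbb{R}^N)$ with $0\le\zeta_n\le1$, $\zeta_n\equiv1$ in a neighborhood of $F\cap\overline\omega$, $\zeta_n\to 0$ a.e.\ and strongly in $W^{1,q_\ast}(\mathbb{R}^N)$ (cf.\ \cite{AdPo}). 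The key preliminary step is the localized energy bound: using $\zeta_n^{q_\ast}\varphi$ or $(1-\zeta_n)^{q_\ast}\varphi$-type test functions in the LR-formulation on $\Omega\setminus F$ and the one-sided bound $C_0|\nabla u|^q-\ell\le|H(x,u,\nabla u)|$ from (\ref{mus}), together with the Young/H\"older splitting (\ref{eps}), one gets $\int_\omega|\nabla u|^q\,dx<\infty$ and hence $H(x,u,\nabla u)\in L^1_{loc}(\Omega)$ via (\ref{mis}); this is the analogue of part (ii) of Lemma \ref{much} localized near $F$. Simultaneously, since $1<q\le p$ gives $q_\ast\ge p'$, control of $\int|\nabla u|^q$ upgrades (via Remark \ref{gre} and the a priori Marcinkiewicz bounds recalled after Definition \ref{nor}) to $T_k(u)\in W^{1,p}_{loc}(\Omega)$ and $|\nabla u|^{p-1}\in L^{\tau}_{loc}$, $|u|^{p-1}\in L^\sigma_{loc}$ with the exponents required in (\ref{ugc}).

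Next I would establish that the measure $\mu-H(x,u,\nabla u)$, together with any singular part of $-\Delta_p u$ that might a priori be concentrated on $F$, actually has no mass on $F$. Write $-\Delta_p u=\mu-H(x,u,\nabla u)+\nu$ in $\mathcal{D}'(\Omega)$ where $\nu$ is an unknown distribution supported on $F$; the point is to show $\nu=0$. Testing the equation on $\Omega\setminus F$ against $(1-\zeta_n)^{q_\ast}\varphi$ for $\varphi\in\mathcal{D}(\Omega)$ and letting $n\to\infty$, the three gradient-type remainder integrals are controlled by $\|\nabla\zeta_n\|_{L^{q_\ast}}\to0$ (using $|\nabla u|^{p-1}\in L^{q_\ast'}_{loc}$ since $q_\ast'\le(q-1+(q\wedge p))/(q-1)$... more simply $p-1\le q-1+p/N$ is irrelevant; the honest point is $(p-1)q_\ast'<q$ is false in general, so one instead estimates $\int|\nabla u|^{p-1}\zeta_n^{q_\ast-1}|\nabla\zeta_n|$ by $\varepsilon\int|\nabla u|^q\zeta_n^{q_\ast}+\varepsilon^{1-q_\ast}\int|\nabla\zeta_n|^{q_\ast}$ as in (\ref{eps}) and uses $\int_{\{\zeta_n>0\}}|\nabla u|^q\to0$ by dominated convergence since $|\{\zeta_n>0\}|\to0$ and $|\nabla u|^q\in L^1_{loc}$). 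Likewise $\int H(x,u,\nabla u)\zeta_n^{q_\ast}\varphi\to0$ and $\int\zeta_n^{q_\ast}\varphi\,d\mu\to0$, the latter precisely because $\mu\in\mathcal{M}^{q_\ast}(\Omega)$ does not charge $F$ (which has zero $q_\ast$-capacity). Hence the formulation passes to the limit, giving $-\Delta_p u=\mu-H(x,u,\nabla u)$ in $\mathcal{D}'(\Omega)$, i.e.\ $\nu=0$ and $u$ is a weak solution of (\ref{dre}) on all of $\Omega$; for part (ii) with $q>p$ this is already the full conclusion.

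For part (i) it remains to promote this $\mathcal{D}'$-solution of $-\Delta_p u=\mu-H(x,u,\nabla u)$ to a genuine LR-solution, i.e.\ to verify the renormalized identity (\ref{plo}) for all admissible $h$ and all compactly supported $\varphi\in W^{1,m}$, $m>N$, with the correct singular terms $h(\pm\infty)\mu_s^{\pm}$. Here I would invoke that $u$ is an LR-solution on $\Omega\setminus F$, write the test function $h(u)\varphi$, replace $\varphi$ by $(1-\zeta_n)\varphi$ to land inside $\Omega\setminus F$, and pass to the limit using the same capacitary estimates: the terms involving $\nabla\zeta_n$ vanish because $h(u)$ is bounded and $h'$ has compact support (so $|\nabla u|\,\chi_{\mathrm{supp}\,h'}$ is in $L^p_{loc}$, even better than needed), while the measure terms behave well since both $\mu_0$ and $\mu_s^\pm$ — being components of $\mu\in\mathcal{M}^{q_\ast}(\Omega)$ after the $q_\ast$-decomposition — assign no mass to $F$. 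One must be slightly careful that the $p$-singular part appearing in (\ref{plo}) is computed relative to $p$-capacity, not $q_\ast$-capacity; but since $q\le p$ forces $q_\ast\ge p'$ hence... actually the cleanest route is: $cap_{1,q_\ast}(F)=0$ with $q_\ast\ge p'$ does not directly give $cap_{1,p}(F)=0$, so instead I would argue that the extended $\mu$ still lies in $\mathcal{M}^{q_\ast}(\Omega)$ by hypothesis and use the decomposition with respect to $q_\ast$-capacity throughout, which is exactly what (\ref{plo}) and Lemma \ref{much} are set up to handle. The main obstacle is precisely this bookkeeping of the singular parts and capacities when $q_\ast\ne p$: ensuring that ``no mass on $F$'' holds for every piece of the right-hand side simultaneously, and that the limiting renormalized identity recovers the singular boundary terms unchanged. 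Once that is handled, uniqueness of the $q_\ast$-decomposition and Theorem \ref{fund}-type stability (applied locally) close the argument.
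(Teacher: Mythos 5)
Your overall strategy --- capacitary cutoffs $\zeta _{n}$, the Lemma \ref{much}-type energy estimate with $\xi _{n}=\zeta (1-\zeta _{n})$ to get $\left\vert \nabla u\right\vert ^{q}\in L_{loc}^{1}(\Omega )$, then passage to the limit in the distributional and renormalized formulations tested against $(1-\zeta _{n})\varphi $ --- is the same as the paper's, and much of it is sound (the treatment of the $\nabla \zeta _{n}$ remainders via (\ref{eps}), the vanishing of $\int \zeta _{n}^{q_{\ast }}d\mu $ because $\mu \in \mathcal{M}^{q_{\ast }}(\Omega )$ does not charge $F$, and essentially all of part (ii)). But there is a genuine gap in part (i), precisely at the point you flag and then resolve incorrectly. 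When $1<q\leqq p$ one has $q_{\ast }=q/(q+1-p)\geqq p$, and since a set of zero $cap_{1,r}$-capacity for a larger exponent $r$ also has zero capacity for any smaller exponent (H\"{o}lder on a bounded neighborhood), the hypothesis $cap_{1,q_{\ast }}(F,\mathbb{R}^{N})=0$ \emph{does} directly give $cap_{1,p}(F\cap K,\mathbb{R}^{N})=0$ for every compact $K$. This is exactly the step the paper uses, and it is indispensable: it is what allows one to conclude $T_{k}(u)\in W_{loc}^{1,p}(\Omega )$ across $F$ (via \cite[Theorem 2.44]{HeKilMa}), to define $\nabla u$ a.e. in all of $\Omega $, and to keep the $p$-capacity decomposition $\mu =\mu _{0}+\mu _{s}^{+}-\mu _{s}^{-}$ --- the one the renormalized identity (\ref{plo}) is written in --- untouched by $F$. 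Your proposed substitute, ``use the decomposition with respect to $q_{\ast }$-capacity throughout,'' is not available: the definition of an LR-solution is intrinsically tied to the $p$-decomposition of the measure and cannot be rewritten relative to $q_{\ast }$. (Your intermediate claim that $q\leqq p$ forces $q_{\ast }\geqq p^{\prime }$ is also false for $p<2$; the inequality that matters is $q_{\ast }\geqq p$.)

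A second, smaller gap: to verify that $u$ satisfies (\ref{ugc}) in all of $\Omega $, and not merely in $\Omega \backslash F$, you need a quantitative local energy bound on the truncates across $F$, of the form $\int_{\Omega }\left\vert \nabla T_{k}(u)\right\vert ^{p}\zeta ^{q_{\ast }}dx\leqq (k+1)C_{\zeta }$. The paper obtains this by testing with $T_{k}(u)\xi _{n}^{q_{\ast }}$, controlling the cross term with (\ref{eps}) together with the already-established bound (\ref{eli}) on $\int_{\Omega }\left\vert \nabla u\right\vert ^{q}\zeta ^{q_{\ast }}dx$, and applying Fatou; the estimates $\left\vert u\right\vert ^{p-1}\in L_{loc}^{\sigma }(\Omega )$ and $\left\vert \nabla u\right\vert ^{p-1}\in L_{loc}^{\tau }(\Omega )$ then follow from the local versions of the estimates of \cite{Be}, \cite{BoGa1}, \cite{PePoPor}. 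Your appeal to Remark \ref{gre} and the Marcinkiewicz bounds recalled after Definition \ref{nor} does not supply this, since those bounds are only known a priori for solutions away from $F$.
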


\begin{proof}
(i) Let $1<q\leqq p.$ From our assumption, $T_{k}(u)\in W_{loc}^{1,p}(\Omega
\backslash F),$ for any $k>0,$ and $\left\vert u\right\vert ^{p-1}\in
L_{loc}^{\sigma }(\Omega ),$ for any $\sigma \in \left[ 1,N/(N-p)\right) ,$
and $\left\vert \nabla u\right\vert ^{p-1}\in L_{loc}^{\tau }(\Omega
\backslash F),$ for any $\tau \in \left[ 1,N/(N-1)\right) ,$ and $\left\vert
\nabla u\right\vert ^{q}\in L_{loc}^{1}(\Omega \backslash F).$ For any
compact $K\subset \Omega $, there holds $cap_{1,p}(F\cap K,\mathbb{R}^{N})=0$%
, because $p\leqq q_{\ast },$ thus $T_{k}(u)\in W_{loc}^{1,p}(\Omega ),$ see
\cite[Theorem 2.44]{HeKilMa}. And $u$ is measurable on $\Omega $ and finite
a.e. in $\Omega ,$ thus we can define $\nabla u$ a.e. in $\Omega $ by the
formula $\nabla u(x)=\nabla T_{k}(u)(x)$ a.e. on the set $\left\{ x\in
\Omega :\left\vert u(x)\right\vert \leqq k\right\} .\medskip $

Let us consider a fixed function $\zeta \in \mathcal{D}^{+}(\Omega )$ and
let $\omega \subset \subset \Omega $ such that supp$\zeta \subset \omega $
and set $K_{\varsigma }=F\cap $ supp$\zeta .$ Then $K_{\varsigma }$ is a
compact and $cap_{1,q_{\ast }}(K,\mathbb{R}^{N})=0$. Thus there exists $%
\zeta _{n}\in \mathcal{D}(\omega )$ such that $0\leq \zeta _{n}\leq 1,$ and $%
\zeta _{n}=1$ on a neighborhood of $K$ contained in $\omega ,$ and $\left(
\zeta _{n}\right) $ converges to $0$ in $W^{1,q_{\ast }}(\mathbb{R}^{N});$
we can assume that the convergence holds everywhere on $\mathbb{R}%
^{N}\backslash N,$ where $cap_{1,q_{\ast }}(N,\mathbb{R}^{N})=0,$ see for
example \cite[Lemmas 2.1,2.2]{BaPi}. From Lemma \ref{much} applied to $\xi
_{n}=\zeta (1-\zeta _{n})$ in $\Omega \backslash F,$ we have
\begin{eqnarray}
\int_{\Omega }\left\vert \nabla u\right\vert ^{q}\xi _{n}^{q_{\ast }}dx
&\leqq &C(\int_{\Omega }\xi _{n}^{q_{\ast }}d\left\vert \mu \right\vert
+\int_{\Omega }\left\vert \nabla \xi _{n}\right\vert ^{q_{\ast
}}dx+\int_{\Omega }\ell \xi _{n}^{q_{\ast }}dx)  \notag \\
&\leqq &C(\int_{\Omega }\zeta ^{q_{\ast }}d\left\vert \mu \right\vert
+\int_{\Omega }\left\vert \nabla \zeta \right\vert ^{q_{\ast
}}dx+\int_{\Omega }\left\vert \nabla \zeta _{n}\right\vert ^{q_{\ast
}}dx+\int_{\Omega }\ell \zeta ^{q_{\ast }}dx).  \label{vol}
\end{eqnarray}%
From the Fatou Lemma, we get $\left\vert \nabla u\right\vert ^{q}\zeta
^{q_{\ast }}\in L^{1}(\Omega )$ and
\begin{equation}
\int_{\Omega }\left\vert \nabla u\right\vert ^{q}\zeta ^{q_{\ast }}dx\leqq
C_{\zeta }:=C(\int_{\Omega }\zeta ^{q_{\ast }}d\left\vert \mu \right\vert
+\int_{\Omega }\left\vert \nabla \zeta \right\vert ^{q_{\ast
}}dx\int_{\Omega }\ell \zeta ^{q_{\ast }}dx),  \label{eli}
\end{equation}%
where $C_{\zeta }$ also depends on $\zeta .$ Taking  $T_{k}(u)\xi
_{n}^{q_{\ast }}$ as test function, we obtain
\begin{eqnarray*}
&&\int_{\Omega }\left\vert \nabla (T_{k}(u))\right\vert ^{p}\xi
_{n}^{q_{\ast }}dx+\int_{\Omega }H(x,u,\nabla u)T_{k}(u)\xi _{n}^{q_{\ast
}}dx \\
&=&\int_{\Omega }T_{k}(u)\xi _{n}^{q_{\ast }}d\mu _{0}+k(\int_{\Omega }\xi
_{n}^{q_{\ast }}d\mu _{s}^{+}+\int_{\Omega }\xi _{n}^{q_{\ast }}d\mu
_{s}^{-})+\int_{\Omega }T_{k}(u)\left\vert \nabla u\right\vert ^{p-2}\nabla
u.\nabla (\xi _{n}^{q_{\ast }})dx;
\end{eqnarray*}%
From the H\"{o}lder inequality, we deduce
\begin{eqnarray*}
&&\frac{1}{k}\left\vert \int_{\Omega }T_{k}(u)\left\vert \nabla u\right\vert
^{p-2}\nabla u.\nabla (\xi _{n}^{q_{\ast }})dx\right\vert  \\
&\leqq &q_{\ast }(\int_{\Omega }\zeta ^{q_{\ast }-1}\left\vert \nabla
u\right\vert ^{p-1}\left\vert \nabla \zeta \right\vert )dx+\int_{\Omega
}\zeta ^{q_{\ast }}\left\vert \nabla u\right\vert ^{p-1}\left\vert \nabla
\zeta _{n}\right\vert dx) \\
&\leqq &(q_{\ast }-1)\int_{\Omega }\left\vert \nabla u\right\vert ^{q}\zeta
^{q_{\ast }}dx+\int_{\Omega }\left\vert \nabla \zeta \right\vert ^{q_{\ast
}}dx+q_{\ast }(\int_{\Omega }\left\vert \nabla u\right\vert ^{q}\zeta
^{q_{\ast }}dx+\int_{\Omega }\zeta ^{q_{\ast }}\left\vert \nabla \zeta
_{n}\right\vert ^{q_{\ast }}dx) \\
&\leqq &2q_{\ast }C_{\zeta }+\int_{\Omega }\left\vert \nabla \zeta
\right\vert ^{q_{\ast }}dx+o(n).
\end{eqnarray*}%
Thus from (\ref{mis}), with a new constant $C_{\zeta },$
\begin{equation*}
\int_{\Omega }\left\vert \nabla (T_{k}(u))\right\vert ^{p}\xi _{n}^{q_{\ast
}}dx\leqq (k+1)C_{\zeta }+o(n);
\end{equation*}%
hence from the Fatou Lemma,
\begin{equation*}
\int_{\Omega }\left\vert \nabla (T_{k}(u))\right\vert ^{p}\zeta ^{q_{\ast
}}dx\leqq (k+1)C_{\zeta }.
\end{equation*}%
Therefore $\left\vert u\right\vert ^{p-1}\in L_{loc}^{\sigma }(\Omega ),$ $%
\forall $ $\sigma \in \left[ 1,N/(N-p)\right) $ and $\left\vert \nabla
u\right\vert ^{p-1}\in L_{loc}^{\tau }(\Omega ),\forall $ $\tau \in \left[
1,N/(N-1)\right) ,$ from a variant of the estimates of \cite{Be} and \cite%
{BoGa1}, see \cite[Lemma 3.1]{PePoPor}.\medskip

Finally we show that $u$ is a LR-solution in $\Omega :$ let\textit{\ }$h\in
W^{1,\infty }(\mathbb{R})$\textit{\ }such that\textit{\ }$h^{\prime }$%
\textit{\ }has a compact support\textit{, }and\textit{\ }$\varphi \in
W^{1,m}(\Omega )$\textit{\ }for some\textit{\ }$m>N,$\textit{\ }with compact
support in $\Omega $, such that\textit{\ }$h(u)\varphi \in W^{1,p}(\Omega );$
let $\omega \subset \subset \Omega $ such that supp$\zeta \subset \omega $
and set $K=F\cap $supp$\zeta ,$ and consider $\zeta _{n}\in \mathcal{D}(%
\mathbb{R}^{N})$ as above; then $(1-\zeta _{n})\varphi \in W^{1,m}(\Omega
\backslash F)$ and $h(u)(1-\zeta _{n})\varphi \in W^{1,p}(\Omega \backslash
F)$ and has a compact support in $\Omega \backslash F,$ then we can write
\begin{equation*}
I_{1}+I_{2}+I_{3}+I_{4}=\int_{\Omega }h(u)\varphi (1-\zeta _{n})d\mu
_{0}+h(+\infty )\int_{\Omega }\varphi (1-\zeta _{n})d\mu _{s}^{+}-h(-\infty
)\int_{\Omega }\varphi (1-\zeta _{n})d\mu _{s}^{-},
\end{equation*}%
with
\begin{eqnarray*}
&&I_{1}=\int_{\Omega }\left\vert \nabla u\right\vert ^{p-2}\nabla
u.h^{\prime }(u)\varphi (1-\zeta _{n})dx,\qquad I_{2}=-\int_{\Omega
}\left\vert \nabla u\right\vert ^{p-2}\nabla u.h(u)\varphi \nabla \zeta
_{n}dx \\
&&I_{3}=\int_{\Omega }\left\vert \nabla u\right\vert ^{p-2}\nabla
u.h(u)(1-\zeta _{n})\nabla \varphi dx,\qquad I_{4}=\int_{\Omega
}H(x,u,\nabla u)h(u)\varphi (1-\zeta _{n})dx.
\end{eqnarray*}%
We can go to the limit in $I_{1}$ as $n\rightarrow \infty ,$ from the
dominated convergence theorem, since there exists $a>0$ such that
\begin{equation*}
\int_{\Omega }\left\vert \nabla u\right\vert ^{p-2}\nabla u.h^{\prime
}(u)\varphi (1-\zeta _{n})dx=\int_{\Omega }\left\vert \nabla
T_{a}(u)\right\vert ^{p-2}\nabla T_{a}(u).h^{\prime }(T_{a}(u))\varphi
(1-\zeta _{n})dx.
\end{equation*}%
Furthermore $I_{2}=o(n),$ because%
\begin{equation*}
\left\vert \int_{\Omega }\left\vert \nabla u\right\vert ^{p-2}\nabla
u.h(u)\varphi \nabla \zeta _{n}dx\right\vert \leqq \left\Vert h\right\Vert
_{L^{\infty }(\mathbb{R})}(\int_{\Omega }\left\vert \nabla u\right\vert
^{q}\varphi dx)^{1/q}\left\Vert \nabla \zeta _{n}\right\Vert _{L^{q_{\ast }}(%
\mathbb{R}^{N})};
\end{equation*}%
we can go to the limit in $I_{3}$ because $\left\vert \nabla \varphi
\right\vert \in L^{m}(\Omega )$ and $\left\vert \nabla u\right\vert
^{p-1}\in L_{loc}^{\tau }(\Omega ),\forall \tau \in \left[ 1,N/(N-1)\right) ;
$ in $I_{4}$ from (\ref{eli}) and (\ref{mis}), and in the right hand side
because $h(u)\varphi \in L^{1}(\Omega ,d\mu _{0}),$ see \cite[Remark 2.26]%
{DMOP} and $\zeta _{n}\rightarrow 0$ everywhere in $\mathbb{R}^{N}\backslash
N$ and $\mu (N)=0.$ Then we  conclude:
\begin{eqnarray*}
&&\int_{\Omega }\left\vert \nabla u\right\vert ^{p-2}\nabla u.\nabla
(h(u)\varphi )dx+\int_{\Omega }H(x,u,\nabla u)h(u)\varphi dx \\
&=&\int_{\Omega }h(u)\varphi d\mu _{0}+h(+\infty )\int_{\Omega }\varphi d\mu
_{s}^{+}-h(-\infty )\int_{\Omega }\varphi d\mu _{s}^{-}.
\end{eqnarray*}

(ii) Assume that $q>p>1$ (hence $1<q_{\ast }<p)$ and $u$ is a weak solution
in $\Omega \backslash F.$ Then $u\in W_{loc}^{1,q}(\Omega \backslash F)$
implies $u\in W_{loc}^{1,q_{\ast }}(\Omega \backslash F)=W_{loc}^{1,q_{\ast
}}(\Omega ),$ hence $\left\vert \nabla u\right\vert $ is well defined in $%
L_{loc}^{1}(\Omega ).$ As in part (i) we obtain that $\left\vert \nabla
u\right\vert ^{q}\zeta ^{q_{\ast }}\in L^{1}(\Omega ),$ hence $\left\vert
\nabla u\right\vert ^{q}\in L_{loc}^{1}(\Omega ).$ For any $\varphi \in
\mathcal{D}(\Omega ),$ and $\omega $ containing supp$\varphi ,$ we have $%
\varphi (1-\zeta _{n})\in \mathcal{D}(\Omega \backslash F),$ then we can
write $J_{1}+J_{2}+J_{3}=\int_{\Omega }\varphi (1-\zeta _{n})d\mu ,$ with
\begin{equation*}
J_{1}=\int_{\Omega }(1-\zeta _{n})\left\vert \nabla u\right\vert
^{p-2}\nabla u.\nabla \varphi dx,\;J_{2}=-\int_{\Omega }\varphi \left\vert
\nabla u\right\vert ^{p-2}\nabla u.\nabla \zeta _{n}dx,\;J_{3}=\int_{\Omega
}H(x,u,\nabla u)\varphi (1-\zeta _{n})dx.
\end{equation*}%
Now we can go to the limit in $J_{1}$ and $J_{3}$ from the dominated
convergence theorem, because $\left\vert \nabla u\right\vert ^{q}\in
L_{loc}^{1}(\Omega )$ and $q>p-1;$ and $(\int_{\Omega }\varphi (1-\zeta
_{n})d\mu )$ converges to $\int_{\Omega }\varphi d\mu $ as above. And $J_{2}$
converges to $0,$ because $\left\vert \nabla u\right\vert ^{p-1}\in
L_{loc}^{q/(p-1)}(\Omega )$ and $\left\vert \nabla \zeta _{n}\right\vert $
tends to $0$ in $L^{q_{\ast }}(\Omega ).$ Then $u$ is a weak solution in $%
\Omega .$
\end{proof}

\section{Existence in the supercritical case}

Here the problem is delicate and many problems are still unsolved.

\subsection{Case of a source term}

Here we consider problem
\begin{equation}
-\Delta _{p}u=\left\vert \nabla u\right\vert ^{q}+\mu \hspace{0.5cm}\text{in
}\Omega ,\qquad u=0\hspace{0.5cm}\text{on }\partial \Omega .  \label{P2}
\end{equation}%
The main question is the following:

\textit{If }$\mu \in M_{b}^{q_{\ast }}(\Omega )$\textit{\ satisfies
condition (\ref{capa}) with a constant }$C>0$\textit{\ small enough, does (%
\ref{P2}) admit a solution?\medskip }

In the case $p=2<q,$ the problem has been solved in \cite{HaMaVe}. In that
case one can define the solutions in a very weak sense. According to \cite%
{BrCMR}, setting $\rho (x)=dist(x,\partial \Omega ),$ a function $u$ is
called a \textit{very weak solution} of (\ref{P2}) if $u\in
W_{loc}^{1,q}\left( \Omega \right) \cap L^{1}(\Omega ),$ $\left\vert \nabla
u\right\vert ^{q}\in L^{1}(\Omega ,\rho dx)$ and for any $\varphi \in
C^{2}\left( \overline{\Omega }\right) $ such that $\varphi =0$ on $\partial
\Omega ,$
\begin{equation*}
-\int_{\Omega }u\Delta \varphi dx=\int_{\Omega }\left\vert \nabla
u\right\vert ^{q}\varphi dx+\int_{\Omega }\varphi d\mu .
\end{equation*}

\begin{theorem}[\protect\cite{HaMaVe}]
\label{Ve}Let $\mu \in \mathcal{M}^{+}(\Omega ).$ If $1<q$ and $p=2$ and (%
\ref{P2}) has a very weak solution, then
\begin{equation}
\mu (K)\leqq Ccap_{1,q^{\prime }}(K,\Omega )  \label{clo}
\end{equation}
for any compact $K\subset \Omega ,$ and some $C<C_{1}(N,q).$ Conversely, if $%
2<q$ and (\ref{clo}) holds for some $C<C_{2}(N,q,\Omega )$ then (\ref{P2})
has a very weak nonnegative solution.
\end{theorem}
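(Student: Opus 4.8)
The plan is to prove the two implications separately, each by now-classical potential-theoretic techniques for $p=2$, i.e. for the Laplacian.

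\textbf{Necessary condition.} Suppose $u$ is a very weak nonnegative solution. The idea is essentially that of Proposition \ref{nec}(ii), specialized to $p=2$: test the equation against $\zeta^{q'}$ for $\zeta\in\mathcal D^+(\Omega)$. Since $u$ is only a very weak solution, I would first justify that this is legitimate by an approximation/regularization argument (or by noting that $u\in W^{1,q}_{loc}$ and $|\nabla u|^q\in L^1(\Omega,\rho\,dx)$ allow test functions vanishing on $\partial\Omega$). One gets
\begin{equation*}
\int_\Omega \zeta^{q'}\,d\mu + \int_\Omega |\nabla u|^q\zeta^{q'}\,dx = q'\int_\Omega \nabla u\cdot \zeta^{q'-1}\nabla\zeta\,dx .
\end{equation*}
Applying Young's inequality $q'|\nabla u|\,\zeta^{q'-1}|\nabla\zeta| \le \frac1{q'}\,? $ — more precisely estimating $q'|\nabla u||\nabla\zeta|\zeta^{q'-1}\le |\nabla u|^q\zeta^{q'}+ (q'-1)^{?}|\nabla\zeta|^{q'}$ with the Hölder conjugate pair $(q,q')$ — the term $\int|\nabla u|^q\zeta^{q'}$ absorbs, leaving $\int_\Omega\zeta^{q'}d\mu\le C(N,q)\int_\Omega|\nabla\zeta|^{q'}dx$. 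Taking the infimum over admissible $\zeta$ for a compact $K$ gives $\mu(K)\le C_1(N,q)\,cap_{1,q'}(K,\Omega)$, and a careful bookkeeping of the constant in the absorption (no slack is needed since we may take $\varepsilon\to 1$) yields the strict inequality $C<C_1(N,q)$. This part is routine given the earlier lemmas.

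\textbf{Sufficient condition.} This is the hard direction and the main obstacle. For $2<q$, under $\mu(K)\le C\,cap_{1,q'}(K,\Omega)$ with $C$ small, I would follow the scheme of Hansson–Maz'ya–Verbitsky / \cite{HaMaVe}: reduce the PDE to an integral inequality for $v=-\Delta^{-1}$-type potential and solve it by a fixed-point/monotone-iteration argument in a suitable Lebesgue or Lorentz space built from the Green potential $\mathbb G$ of $\Omega$. Concretely, a solution of $-\Delta u=|\nabla u|^q+\mu$ with $u=0$ on $\partial\Omega$ satisfies $u=\mathbb G[|\nabla u|^q]+\mathbb G[\mu]$, hence $w:=|\nabla u|$ should satisfy pointwise $w\le C\big(\mathbb I_1[w^q]+\mathbb I_1[\mu]\big)$ where $\mathbb I_1$ is the first-order Green (Riesz-type) potential; the capacitary condition on $\mu$ is exactly what makes $\mathbb I_1[\mu]$ lie in the right weak-type space and makes the nonlinear operator $T(w)=C\mathbb I_1[w^q]+C\mathbb I_1[\mu]$ a contraction-like self-map on a small ball, by a Maz'ya-type trace/capacity inequality $\|\mathbb I_1[f]\|_{L^q(d\nu)}\le A\|f\|_{L^{q'}}$ available precisely when $\nu$ satisfies the capacity bound. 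Then one constructs $w_0=0$, $w_{k+1}=T(w_k)$, shows the sequence increases and stays bounded, passes to a limit $w$, recovers $u=\mathbb G[w^q\text{-type term}]+\mathbb G[\mu]$, and checks $u$ is a very weak solution. The delicate points are: (a) the passage from the scalar inequality for $|\nabla u|$ back to an actual solution of the vectorial problem, which requires gradient estimates for the Green potential and a further fixed point now for $u$ itself with the a priori bound on $\|\nabla u\|$ in hand; (b) tracking constants so that ``$C$ small'' in the hypothesis translates into the contraction constant being $<1$, which pins down $C_2(N,q,\Omega)$; and (c) the restriction $q>2=p$, needed so that the Green potential gains enough regularity ($\mathbb G:\,L^1\to W^{1,s}$ for $s<N/(N-1)$ and the bootstrap closes only when $q>p$). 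I expect step (a)–(b) to be where essentially all the work lies; the rest is adaptation of \cite{HaMaVe} to a bounded domain, citing \cite{BrCMR} for the very-weak-solution framework and \cite{Maz} for the capacitary trace inequality.
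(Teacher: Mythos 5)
The paper offers no proof of Theorem \ref{Ve}: it is quoted from \cite{HaMaVe} (with the very weak solution framework taken from \cite{BrCMR}), so there is no internal argument to compare yours against except for the necessity half. That half the paper does prove, in greater generality, in Lemma \ref{much} and Proposition \ref{nec}(ii): for $p=2$ one has $q_{\ast}=q^{\prime}$, and your computation --- testing with $\zeta^{q^{\prime}}$ and absorbing $\int_{\Omega}|\nabla u|^{q}\zeta^{q^{\prime}}dx$ by Young's inequality --- is exactly the paper's inequality (\ref{eps}) specialized to $p=2$. So the first implication is essentially correct, modulo the (routine, but not entirely free) justification that $\zeta^{q^{\prime}}$ is an admissible test function for a solution that is only ``very weak''; note that the paper's Proposition \ref{nec}(ii) is stated for weak solutions with $T_k(u)\in W^{1,p}_{loc}$, which is slightly more regularity than \cite{BrCMR} gives you.

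The converse is where the entire content of the theorem lives, and as written your text is a program, not a proof. Two concrete remarks. First, the reduction to a scalar inequality $w\leq C(I_{1}[w^{q}]+I_{1}[\mu])$ for $w=|\nabla u|$ presupposes the solution you are trying to build; the construction in \cite{HaMaVe} instead iterates on $u$ itself, $u_{0}=\mathbb{G}[\mu]$, $u_{k+1}=\mathbb{G}[|\nabla u_{k}|^{q}]+\mathbb{G}[\mu]$, and uses the pointwise gradient bound for the Green potential together with the iterated-potential inequality $J_{1}\bigl((J_{1}(\mu))^{q^{\prime}}\bigr)\leq CJ_{1}(\mu)$ (the equivalence recorded in Remark \ref{equi}, (\ref{jj})) to show that all iterates stay dominated by a fixed multiple of the first-order potential of $\mu$; if you set the iteration up this way your worry (a) about ``recovering $u$ from $w$'' disappears, but the uniform domination estimate --- which is precisely where the smallness threshold $C_{2}(N,q,\Omega)$ is determined and where the boundary behaviour of the Green kernel enters --- is the unproved step. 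Second, you do not actually explain why $q>2$ is needed for sufficiency while necessity holds for all $q>1$; your parenthetical about the Green potential ``gaining enough regularity'' gestures at it but does not pin it down, and this asymmetry is part of what the theorem asserts. In the context of this paper, citing \cite{HaMaVe} for the converse is exactly what the authors do; as a standalone proof, the converse direction has a genuine gap at the contraction/domination estimate.
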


In the general case $p>1,$ such a notion of solution does not exist. The
problem (\ref{P2}) with $p<q$ was studied by \cite{Ph} for signed measures $%
\mu \in \mathcal{M}_{b}(\Omega )$ such that
\begin{equation*}
\left[ \mu \right] _{1,q^{\ast },\Omega }=\sup \left\{ \frac{\left\vert \mu
(K\cap \Omega )\right\vert }{Cap_{1,q^{\ast }}(K,\mathbb{R}^{N})}:K\text{
compact of }\mathbb{R}^{N},Cap_{1,q^{\ast }}(K,\mathbb{R}^{N})>0\right\}
<\infty .
\end{equation*}

\begin{theorem}[\protect\cite{Ph}]
\label{phuc}Let $1<p<q.$ Let $\mu \in \mathcal{M}_{b}(\Omega ).$ There
exists $C_{1}=C_{1}(N,p,q,\Omega )$ such that if
\begin{equation}
\left\vert \mu (K\cap \Omega )\right\vert \leqq Ccap_{1,q^{\ast }}(K,\mathbb{%
R}^{N})  \label{ploc}
\end{equation}%
for any compact $K\subset \mathbb{R}^{N},$ and some $C<C_{1}$, then (\ref{P2}%
) has a weak solution $u\in W_{0}^{1,q}(\Omega ),$ such that $\left[
\left\vert \nabla u\right\vert ^{q}\right] _{1,q^{\ast },\Omega }$ is
finite. In particular this holds for any $\mu \in L^{N/q^{\ast },\infty
}(\Omega ).$
\end{theorem}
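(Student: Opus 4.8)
The plan is to construct the solution as a limit of approximations on the level of the equation $-\Delta_p u = |\nabla u|^q + \mu$, using a fixed-point scheme driven by pointwise potential estimates. The key analytic tool is the Havin--Maz'ya--Wolff (or Bessel) potential estimate for solutions of the $p$-Laplacian: if $-\Delta_p w = \nu$ in $\mathbb{R}^N$ with $\nu \in \mathcal{M}_b^+$, then $w$ is controlled from above by the Wolff potential $\mathbf{W}_{1,p}^{R}[\nu]$, and in turn the gradient satisfies $|\nabla w| \lesssim \mathbf{I}_1[\nu]^{1/(p-1)}$-type bounds, or more precisely a pointwise bound by a truncated Wolff potential. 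The condition $(\ref{ploc})$ is exactly the Maz'ya--Verbitsky-type capacitary condition which guarantees that the nonlinear operator $\nu \mapsto \big(\mathbf{W}_{1,p}[\nu]\big)^{q/\cdots}$ maps a suitable cone of measures into itself once $C$ is small. This is the mechanism by which the smallness threshold $C_1$ appears.

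First I would reduce to $\mathbb{R}^N$ by extending $\mu$ by zero and working with the Dirichlet problem on $\Omega$, so that all potentials are the truncated potentials $\mathbf{W}_{1,p}^{2\,\mathrm{diam}\,\Omega}$. Then I would set up the iteration: let $u_0 = 0$, and given $u_k$ with $|\nabla u_k|^q \in L^1$, let $u_{k+1} \in W_0^{1,q}(\Omega)$ solve $-\Delta_p u_{k+1} = |\nabla u_k|^q + \mu$ (existence of each $u_{k+1}$ follows from Theorem~\ref{fund} together with the a priori control we are about to establish). The core estimate is: there is a constant $c_0 = c_0(N,p,q,\Omega)$ such that the measure $d\lambda := |\nabla u_k|^q\,dx + d\mu$ satisfies $[\lambda]_{1,q^*,\Omega} \leq 2C$ provided $[|\nabla u_k|^q]_{1,q^*,\Omega} \leq C$ and $C < C_1$; this is where one invokes the quantitative capacitary estimate showing that $|\nabla w|^q$ inherits the capacitary bound of $\nu$ with a gain of a factor that is small when $C$ is small. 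One then verifies by induction that $[|\nabla u_k|^q]_{1,q^*,\Omega} \leq C$ for all $k$, using that $q > p$ ensures the exponent in the Wolff-potential estimate is supercritical in the right direction (so the nonlinearity is "contractive at small scale in capacity").

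Having the uniform bound $[|\nabla u_k|^q]_{1,q^*,\Omega} \leq C$, the sequence $(\mu - $ nothing; rather the data $|\nabla u_k|^q + \mu)$ is uniformly bounded in $\mathcal{M}_b(\Omega)$, so by Lemma~\ref{ldmop}(ii) a subsequence of $(u_k)$ converges a.e., with $\nabla u_k \to \nabla u$ a.e. and $T_m(u_k) \to T_m(u)$ weakly in $W_0^{1,p}$. The uniform capacitary bound on $|\nabla u_k|^q$ gives equi-integrability of $(|\nabla u_k|^q)$ in $L^1(\Omega)$ (a set of small capacity has small measure, and on the complement one uses the $L^{q,\infty}$-type gradient bound), so $|\nabla u_k|^q \to |\nabla u|^q$ strongly in $L^1(\Omega)$ by Vitali. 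Then $(|\nabla u_k|^q + \mu)$ is a good approximation of $|\nabla u|^q + \mu$, and Theorem~\ref{fund} identifies $u$ as a weak solution of $(\ref{P2})$; the bound $[\,|\nabla u|^q\,]_{1,q^*,\Omega} < \infty$ passes to the limit by Fatou applied to the capacitary quotients. Finally $u \in W_0^{1,q}(\Omega)$ follows because $|\nabla u|^q \in L^1$ and $q > p > q^*$, via the gradient estimates already quoted in Section~2. The last sentence, that $\mu \in L^{N/q^*,\infty}(\Omega)$ suffices, is because such $\mu$ automatically satisfies $(\ref{ploc})$ with constant proportional to $\|\mu\|_{L^{N/q^*,\infty}}$, by the standard embedding of weak-$L^{N/q^*}$ into the dual capacitary space.

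The main obstacle is the core capacitary self-improvement estimate: showing that the bound $|\nabla w|^q$ inherits a capacitary bound with a small constant when $\nu = |\nabla v|^q + \mu$ does. This requires the sharp pointwise Wolff-potential estimate for $|\nabla w|$ (not merely for $w$), which for $p \neq 2$ is a genuinely hard ingredient — it is exactly the gradient potential estimate of Duzaar--Mingione type — together with a nonlinear iteration on the capacity side in the spirit of Maz'ya--Verbitsky and Phuc--Verbitsky for $-\Delta_p u = u^Q$. The bookkeeping of how the small constant propagates through the iteration, and the choice of the threshold $C_1$, is the technical heart of the argument; everything else is a compactness wrap-up via Theorem~\ref{fund}.
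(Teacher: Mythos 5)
First, a point of context: the paper does not prove Theorem \ref{phuc} --- it is quoted from \cite{Ph} --- so there is no internal proof to compare against, and your sketch has to be measured against Phuc's original argument. At that level your outline captures the right global strategy: an iteration (or fixed point) on the cone of data satisfying the capacitary condition, with smallness of $C$ needed because the map $\nu\mapsto|\nabla w|^{q}$, where $-\Delta_{p}w=\nu$, is superlinear of order $q/(p-1)>1$ in the capacitary norm $[\,\cdot\,]_{1,q^{\ast},\Omega}$. But the two steps that carry all the content are not actually supplied. The core a priori estimate --- that $\bigl[\,|\nabla w|^{q}\bigr]_{1,q^{\ast},\Omega}\leq A\,[\nu]_{1,q^{\ast},\Omega}^{q/(p-1)}$ for solutions of $-\Delta_{p}w=\nu$ --- \emph{is} the substance of the theorem; in \cite{Ph} it does not come from pointwise gradient potential bounds of Duzaar--Mingione type but from global weighted Calder\'on--Zygmund ($W^{1,q}$) estimates for the $p$-Laplacian combined with the Maz'ya--Verbitsky characterization of the capacitary condition (this is exactly the estimate the present paper imports as \cite[Corollary 2.5]{Ph} inside the proof of Lemma \ref{conv}). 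Declaring it ``the main obstacle'' and leaving it unproved means the proof is not done.

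Second, your passage to the limit contains a genuine error. You claim that the uniform bound $\int_{E}|\nabla u_{k}|^{q}\,dx\leq C\,cap_{1,q^{\ast}}(E,\Omega)$ yields equi-integrability because ``a set of small capacity has small measure.'' That implication is true but points the wrong way: equi-integrability requires smallness of $\int_{E}|\nabla u_{k}|^{q}\,dx$ over sets $E$ of small \emph{Lebesgue measure}, and small measure does not force small capacity --- the isocapacitary inequality gives the lower bound $cap_{1,q^{\ast}}(E)\gtrsim|E|^{(N-q^{\ast})/N}$, not an upper one. Boundedness of $(\nabla u_{k})$ in $L^{q}$ plus a.e.\ convergence only gives weak-$\ast$ convergence of $|\nabla u_{k}|^{q}\,dx$ as measures, possibly with a singular defect, which is not enough to feed into Theorem \ref{fund}. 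This is precisely the difficulty that Lemma \ref{conv} of the present paper (modelled on \cite[Theorem 3.4]{Ph}) is built to overcome, via the Green potential of $|\nabla u_{k}|^{q}$ and Egoroff's theorem; an argument of that kind is indispensable and is missing from your proposal. The remaining observations (that $\mu\in L^{N/q^{\ast},\infty}(\Omega)$ satisfies (\ref{ploc}), and that $q>p$ implies $p>q^{\ast}$ so that $W_{0}^{1,q}$ membership follows once $|\nabla u|^{q}\in L^{1}$) are correct.
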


Very recently the case $p=q$, has been studied in \cite{JaMaVe} for signed
measures satisfying a trace inequality: setting $p^{\#}=(p-1)^{2-p}$ if $%
p\geqq 2,$ $p^{\#}=1$ if $p<2,$ they show in particular the following:

\begin{theorem}[\protect\cite{JaMaVe}]
\label{jmv}Let $1<p=q.$ Let $\mu \in \mathcal{M}_{b}(\Omega )$ such that
\begin{equation}
-C_{1}\int_{\Omega }\left\vert \nabla \zeta \right\vert ^{p}dx\leqq
\int_{\Omega }\left\vert \zeta \right\vert ^{p}d\mu \leq C_{2}\int_{\Omega
}\left\vert \nabla \zeta \right\vert ^{p}dx,\qquad \forall \zeta \in
\mathcal{D}(\Omega ),
\end{equation}%
with $C_{1}>0$ and $C_{2}\in (0,p^{\#}).$ Then (\ref{P2}) has a weak
solution $u\in W_{loc}^{1,p}(\Omega ).$
\end{theorem}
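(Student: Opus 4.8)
The plan is to use the special algebraic structure of the case $q=p$ through a Cole--Hopf type change of unknown, which linearizes the equation in the $(p-1)$--homogeneous sense and turns the gradient source into a measure potential. Set $w=e^{u/(p-1)}$; then, formally and rigorously on the truncations $T_{k}(u)$ (using $\left\vert \nabla u\right\vert ^{p}\in L_{loc}^{1}(\Omega)$),
\begin{equation*}
\left\vert \nabla w\right\vert ^{p-2}\nabla w=(p-1)^{1-p}e^{u}\left\vert \nabla u\right\vert ^{p-2}\nabla u,\qquad -\Delta _{p}w=(p-1)^{1-p}e^{u}\bigl(-\Delta _{p}u-\left\vert \nabla u\right\vert ^{p}\bigr)=(p-1)^{1-p}w^{p-1}\mu ,
\end{equation*}
with $w>0$ in $\Omega$ and $w=1$ on $\partial \Omega$; conversely, any positive $w$ solving this equation which is locally bounded and locally bounded away from $0$ gives $u=(p-1)\ln w\in W_{loc}^{1,p}(\Omega)$ solving (\ref{P2}). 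So the task reduces to
\begin{equation*}
-\Delta _{p}w=(p-1)^{1-p}w^{p-1}\mu \quad \text{in }\Omega ,\qquad w>0,\qquad w=1\text{ on }\partial \Omega . \tag{$\star$}
\end{equation*}
The hypothesis is tailored to $(\star)$: the upper bound with $C_{2}<p^{\#}$ provides the subcritical room to control the potential $V:=(p-1)^{1-p}\mu$, while the lower bound with $C_{1}$ keeps the negative part of $\mu$ from destroying coercivity.

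Next I would solve $(\star)$ by approximation. Take a good approximation $\mu _{n}$ of $\mu$ as in Lemma \ref{app}, smooth and bounded, still obeying a two--sided trace bound with upper constant $<p^{\#}$, and look for a positive $w_{n}\in 1+W_{0}^{1,p}(\Omega)$ with $-\Delta _{p}w_{n}=(p-1)^{1-p}w_{n}^{p-1}\mu _{n}$. For smooth bounded data this $(p-1)$--homogeneous problem is solvable under the smallness of $C_{2}$, e.g. by a monotone sub/supersolution scheme between $w\equiv 1$ and a supersolution manufactured from the trace inequality; by the Harnack inequality for nonnegative $p$--supersolutions one also has $w_{n}\geq \delta _{\omega }>0$ on each $\omega \subset \subset \Omega$, uniformly in $n$. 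The heart of the matter is the uniform estimate: testing the equation for $w_{n}$ against $w_{n}\psi ^{p}$ with $\psi \in \mathcal{D}(\Omega)$, applying the trace inequality to $\zeta =w_{n}\psi \in W_{0}^{1,p}(\Omega)$ and a ground--state (Picone--type) manipulation, one arrives at a bound of the schematic form
\begin{equation*}
\Bigl(1-\tfrac{C_{2}}{p^{\#}}\Bigr)\int_{\Omega }\psi ^{p}\left\vert \nabla w_{n}\right\vert ^{p}dx\leq C\Bigl(\int_{\Omega }w_{n}^{p}\left\vert \nabla \psi \right\vert ^{p}dx+\left\vert \mu \right\vert (\Omega )\Bigr),
\end{equation*}
which together with the two--sided bounds on $w_{n}$ gives $\int_{\omega }\left\vert \nabla u_{n}\right\vert ^{p}dx=\int_{\omega }w_{n}^{-p}\left\vert \nabla w_{n}\right\vert ^{p}dx\leq C(\omega)$, uniformly in $n$, where $u_{n}=(p-1)\ln w_{n}$.

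With these uniform local bounds, I would pass to the limit: up to a subsequence $w_{n}\to w$ a.e. and weakly in $W_{loc}^{1,p}(\Omega)$, and by the stability and compactness results for $p$--Laplace measure data (Lemma \ref{ldmop} and Theorem \ref{fund}, applied locally to the measures $(p-1)^{1-p}w_{n}^{p-1}\mu _{n}$), $\nabla w_{n}\to \nabla w$ a.e.; hence $\left\vert \nabla w_{n}\right\vert ^{p-2}\nabla w_{n}\to \left\vert \nabla w\right\vert ^{p-2}\nabla w$ in $L_{loc}^{1}$, and since $w_{n}^{p-1}\to w^{p-1}$ in $L_{loc}^{1}$ one can pass to the limit in the potential term using the trace bound, so $w$ solves $(\star)$ in $\mathcal{D}^{\prime }(\Omega)$. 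The Harnack bounds persist, so $w>0$ locally, $u:=(p-1)\ln w\in W_{loc}^{1,p}(\Omega)$, and undoing the change of variables on $T_{k}(u)$ shows that $u$ is a weak solution of (\ref{P2}). Note that since only $u\in W_{loc}^{1,p}(\Omega)$ is claimed, no boundary trace of $u$ need be recovered.

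I expect the sharp appearance of $p^{\#}$ to be the real obstacle. A crude test--function computation as above in fact only yields the weaker threshold $C_{2}<(p-1)^{p-1}$; reaching $p^{\#}=(p-1)^{2-p}$ for $p\geq 2$ requires the sharp convexity inequality for $\xi \mapsto \left\vert \xi \right\vert ^{p}$ (equivalently the sharp ground--state representation for $-\Delta _{p}$), so that the error terms generated by the substitution are exactly absorbed by the coercive part instead of being dominated via Young's inequality; an alternative, probably the cleanest, is to bypass the substitution's analysis and obtain $(\star)$ by nonlinear potential theory, building $w$ as the limit of an iteration governed by Wolff potential estimates for $\mu$. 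A secondary difficulty, visible already in $(\star)$, is that the two--sided trace hypothesis does not control $\mu ^{+}$ and $\mu ^{-}$ separately; the solvability of the approximate problems and all coercivity estimates must therefore be arranged to use only the signed quantity $\int _{\Omega }\left\vert \zeta \right\vert ^{p}d\mu$, which is one reason to prefer a monotone scheme over direct minimization.
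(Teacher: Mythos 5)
You should first be aware that the paper offers no proof of this statement: Theorem \ref{jmv} is quoted verbatim from the reference \cite{JaMaVe} (Jaye--Maz'ya--Verbitsky) and is used as a black box, so there is no internal argument to compare yours against. That said, your strategy --- the substitution $w=e^{u/(p-1)}$, which converts (\ref{P2}) with $q=p$ into the $(p-1)$-homogeneous Schr\"{o}dinger-type problem $-\Delta _{p}w=(p-1)^{1-p}w^{p-1}\mu $ and ties solvability to the two-sided form-boundedness of $\mu $ --- is exactly the mechanism underlying the cited work, and your identification of $p^{\#}$ as the sharp threshold coming from a ground-state/Picone representation is the right diagnosis. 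The algebra of the substitution is correct.

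As a proof, however, the proposal has genuine gaps precisely at the points that carry the theorem. First, you concede that the test-function computation you describe only reaches a threshold like $(p-1)^{p-1}$ rather than $p^{\#}=(p-1)^{2-p}$; since the whole content of the statement is the constant $p^{\#}$, deferring this to ``the sharp convexity inequality'' or to ``nonlinear potential theory'' leaves the central estimate unproved. Second, your existence scheme for the approximate problems is flawed as stated: for a signed measure $\mu _{n}$ the constant $w\equiv 1$ is \emph{not} a subsolution of $-\Delta _{p}w=(p-1)^{1-p}w^{p-1}\mu _{n}$ (that would require $\mu _{n}\geqq 0$), so the monotone sub/supersolution bracket does not get off the ground; you flag the sign issue yourself but do not resolve it, and the standard resolution (a variational or iterative construction using only the coercivity $\int \left\vert \nabla \zeta \right\vert ^{p}dx-\int \left\vert \zeta \right\vert ^{p}d\mu \geqq (1-C_{2})\int \left\vert \nabla \zeta \right\vert ^{p}dx$ together with the lower bound $-C_{1}$) is nontrivial. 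Third, the right-hand side $w^{p-1}\mu $ is a product of a measure with a merely Sobolev function; making sense of it requires quasi-continuous representatives and the fact (which should be derived from the form bound) that $\mu $ does not charge sets of zero $p$-capacity, and the limit passage in this term cannot simply invoke Theorem \ref{fund}, whose hypotheses concern a fixed good approximation rather than solution-dependent measures. These are not cosmetic omissions: each is a place where the argument, as written, would stall.
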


The existence for problem (\ref{P2}) is still open in the case $q<p$ for $%
p\neq 2$

\subsection{Case of an absorption term}

Here we consider problem (\ref{P0}) in case of absorption, where $\mu \in
M_{b}^{+}(\Omega )$ and we look for a nonnegative solution. In the model
case
\begin{equation}
-\Delta _{p}u+\left\vert \nabla u\right\vert ^{q}=\mu \hspace{0.5cm}\text{in
}\Omega ,\qquad u=0\quad \text{on }\partial \Omega ,  \label{P4}
\end{equation}%
the main question is the following: \textit{If }$\mu \in M_{b}^{q_{\ast
}+}(\Omega ),$\textit{\ hence }$\mu =f+ {div}g,$\textit{\ with }$f\in
L^{1}\left( \Omega \right) $\textit{\ and }$g\in (L^{q/(p-1)}\left( \Omega
\right) )^{N}$\textit{, does (\ref{P4}) admits a nonnegative solution?}

\begin{remark}
Up to changing $u$ into $-u,$ the results of Theorem \ref{phuc} and \ref{jmv}
are also available for the problem (\ref{P4}) but we have \textbf{no
information on the sign of} $u.$
\end{remark}

In the sequel we give two partial results of existence.

\subsubsection{Case $q\leqq p$ and $\protect\mu \in \mathcal{M}%
_{b}^{p+}(\Omega )$\protect\medskip}

Here we assume that $\mu \in \mathcal{M}_{b}^{p+}(\Omega ),$ subspace of $%
\mathcal{M}_{b}^{q_{\ast }+}(\Omega ).$ Our proof is directly inspired from
the results of \cite{BoGaOr} for the problem (\ref{P1}), where $q=p$ and $%
H(x,u,\xi )u\geqq 0.$

\begin{theorem}
\label{one}Let $p-1<q\leqq p.$ Let $\mu \in \mathcal{M}_{b}^{p+}(\Omega ),$
and
\begin{eqnarray}
0 &\leqq &H(x,u,\xi )\leqq C_{1}\left\vert \xi \right\vert ^{p}+\ell (x),
\label{jin} \\
H(x,u,\xi ) &\geqq &C_{0}\left\vert \xi \right\vert ^{q}\text{ for }u\geqq L,
\label{jan}
\end{eqnarray}%
with $\ell (x)\in L^{1}(\Omega ),C_{k},C_{0},L\geqq 0$. Then there exists a
nonnegative R-solution of problem (\ref{P0}).
\end{theorem}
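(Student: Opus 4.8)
The plan is to construct a nonnegative R-solution by the usual approximation/compactness scheme, obtaining the necessary a~priori estimates from a clever choice of test functions adapted to the structure $p-1 < q \leqq p$. We approximate $\mu$ by a nondecreasing sequence of nonnegative good approximations $\mu_n \in W^{-1,p'}(\Omega) \cap \mathcal{M}_b^+(\Omega)$ satisfying $\mu_n(\Omega) \leqq 4\mu(\Omega)$, as furnished by Lemma~\ref{app}(i); truncate $H$ suitably (say replace $H$ by $H_n(x,u,\xi) = H(x,u,\xi)/(1 + \tfrac{1}{n}H(x,u,\xi))$, or $\min(H,n)$) so that for each $n$ the problem $-\Delta_p u_n + H_n(x,u_n,\nabla u_n) = \mu_n$ in $\Omega$, $u_n = 0$ on $\partial\Omega$, has a weak solution $u_n \in W_0^{1,p}(\Omega) \cap L^\infty(\Omega)$: here $0$ is a subsolution and the solution $\psi_n$ of $-\Delta_p\psi_n = \mu_n$ is a bounded supersolution (since $\mu_n \in L^\infty$), so existence and $0 \leqq u_n \leqq \psi_n$ follow from a sub/supersolution argument as in Theorem~\ref{posi}. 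The difficulty compared to Theorem~\ref{posi} is that $q$ may exceed $q_c$, so we cannot appeal to the naive subcritical a~priori bound; instead we must exploit that $H$ is an \emph{absorption} term.

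The heart of the argument is the a~priori estimate on $\int_\Omega |\nabla u_n|^p\,dx$ (or at least on $\int |\nabla u_n|^q$ and on the truncations in $W_0^{1,p}$). Following \cite{BoGaOr}: since $\mu_n \in \mathcal{M}_b^{p+}(\Omega)$ we may write $\mu_n = f_n + \operatorname{div} g_n$ with $f_n \in L^1$, $g_n \in (L^{p'})^N$, and $\|f_n\|_{L^1}$, $\|g_n\|_{L^{p'}}$ controlled uniformly in $n$. I would first test the equation with $\varphi = \Psi(u_n)$ for an exponential-type function $\Psi$, e.g. $\Psi(s) = \int_0^s e^{\lambda|t|}\operatorname{sign}(t)\,dt$ with $\lambda$ chosen to beat the constant $C_1$ in \eqref{jin}: this is the standard device that turns a $p$-growth gradient absorption into something that absorbs the bad term $C_1|\nabla u_n|^p$ coming from $H$ and from the Young inequality applied to $\int g_n\cdot\nabla(\Psi(u_n))$. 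One gets $\int_\Omega |\nabla u_n|^p e^{\lambda|u_n|}\,dx \leqq C(\|f_n\|_{L^1}, \|g_n\|_{L^{p'}}, \|\ell\|_{L^1})$ on the set where the absorption helps, and combined with \eqref{jan} (valid for $u_n \geqq L$) and the bound $0 \leqq u_n \leqq \psi_n \in L^\infty$, this yields a uniform bound on $\int_\Omega |\nabla u_n|^p\,dx$ and hence $u_n$ bounded in $W_0^{1,p}(\Omega)$. In particular $(H_n(x,u_n,\nabla u_n))$ is bounded in $L^1(\Omega)$.

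With these bounds in hand, I would pass to the limit by the now-standard compactness machinery. Boundedness of $u_n$ in $W_0^{1,p}(\Omega)$ and of $H_n(x,u_n,\nabla u_n)$ in $L^1(\Omega)$, together with $|\mu_n|(\Omega) \leqq 4\mu(\Omega)$, let us apply Lemma~\ref{ldmop}: up to a subsequence $u_n \to u$ a.e., $\nabla u_n \to \nabla u$ a.e., $T_k(u_n) \rightharpoonup T_k(u)$ weakly in $W_0^{1,p}(\Omega)$, and $|\nabla u_n|^{p-1} \to |\nabla u|^{p-1}$ strongly in $L^\tau$ for $\tau \in [1, N/(N-1))$; since $q \leqq p$ this gives $|\nabla u_n|^q \to |\nabla u|^q$ in $L^1(\Omega)$, and one upgrades $H_n(x,u_n,\nabla u_n) \to H(x,u,\nabla u)$ strongly in $L^1(\Omega)$ via Vitali (equi-integrability from the uniform $\int |\nabla u_n|^p e^{\lambda|u_n|}$ bound). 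Then $(\mu_n - H_n(x,u_n,\nabla u_n))$ is a good approximation of $\mu - H(x,u,\nabla u)$, so Theorem~\ref{fund} identifies $u$ as an R-solution of $-\Delta_p u = \mu - H(x,u,\nabla u)$, i.e. a nonnegative R-solution of \eqref{P0}. \textbf{The main obstacle} is the a~priori estimate: one must choose the test function (the exponential $\Psi$, and possibly a second truncation-type test function to handle the region $\{u_n < L\}$ where \eqref{jan} gives nothing) so that the $C_1|\nabla u_n|^p$ term from \eqref{jin}, the gradient term from pairing against $\operatorname{div} g_n$, and the absorption \eqref{jan} all cooperate — this is exactly where the hypothesis $p-1 < q \leqq p$ and $\mu \in \mathcal{M}_b^{p+}$ (rather than merely $\mathcal{M}_b^{q_\ast+}$) is used, and it is delicate because the absorption only kicks in for $u_n \geqq L$.
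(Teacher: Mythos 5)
Your overall skeleton (approximate $\mu$, solve by sub/supersolutions as in Theorem \ref{posi}, a priori bounds, then Lemma \ref{ldmop} plus Theorem \ref{fund}) matches the paper's, but the two steps you yourself flag as the heart of the matter are exactly where the argument breaks down. First, the claimed uniform bound on $\int_{\Omega }\left\vert \nabla u_{n}\right\vert ^{p}dx$ is false in general: testing with $\Psi (u_{n})=\int_{0}^{u_{n}}e^{\lambda t}dt$ produces the term $\int_{\Omega }f_{n}\Psi (u_{n})dx$ with $f_{n}$ only bounded in $L^{1}(\Omega )$ and $\Psi (u_{n})$ unbounded (each barrier $\psi _{n}$ is bounded, but $\left\Vert \psi _{n}\right\Vert _{L^{\infty }}$ blows up with $n$), so nothing absorbs it; and indeed for $L^{1}$ data the solutions do not belong to $W_{0}^{1,p}(\Omega )$ when $q<p$ --- the statement is about R-solutions precisely because only the truncations lie in $W_{0}^{1,p}(\Omega )$. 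What the structure actually yields, and what the paper uses, is the level-set estimate (\ref{xyz}) from $\varphi =k^{-1}T_{k}(u_{n}-m)$ and, choosing $m+k>L$ and invoking (\ref{jan}), a uniform bound on $\int_{\Omega }\left\vert \nabla u_{n}\right\vert ^{q}dx$ only, which suffices since $q\leqq p$ gives $\int_{\left\{ u_{n}<k\right\} }\left\vert \nabla u_{n}\right\vert ^{q}dx\leqq \int_{\Omega }\left\vert \nabla T_{k}(u_{n})\right\vert ^{p}dx+\left\vert \Omega \right\vert $.

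Second, and more seriously, your passage to the limit in $H$ is not justified. The strong $L^{\tau }$ convergence of $\left\vert \nabla u_{n}\right\vert ^{p-1}$ for $\tau <N/(N-1)$ gives $L^{1}$ convergence of $\left\vert \nabla u_{n}\right\vert ^{s}$ only for $s<q_{c}=N(p-1)/(N-1)$, and the theorem allows $q\in (q_{c},p]$: this is the supercritical range that makes the result nontrivial, so ``since $q\leqq p$ this gives $\left\vert \nabla u_{n}\right\vert ^{q}\rightarrow \left\vert \nabla u\right\vert ^{q}$ in $L^{1}$'' is wrong. Moreover the equi-integrability of $H(x,u_{n},\nabla u_{n})$, whose upper bound (\ref{jin}) involves $\left\vert \nabla u_{n}\right\vert ^{p}$, cannot follow from a uniform $L^{p}$ gradient bound even if one held; it requires (a) the uniform tail estimate (\ref{rou}), $\lim_{k}\sup_{n}\int_{\left\{ u_{n}\geqq k\right\} }H\,dx=0$, obtained by testing with $T_{1}(u_{n}-k)$ and exploiting the equi-integrability of $f_{n}$ and $\left\vert g_{n}\right\vert ^{p^{\prime }}$ coming from $\mu \in \mathcal{M}_{b}^{p+}(\Omega )$, and (b) the \emph{strong} convergence $T_{k}(u_{n})\rightarrow T_{k}(u)$ in $W_{0}^{1,p}(\Omega )$. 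The latter is the real technical core, and the exponential device must be applied not to $u_{n}$ but to the difference of truncations, $\varphi _{n}=\Phi (T_{k}(u_{n})-T_{k}(u))$ with $\Phi (s)=se^{\theta ^{2}s^{2}/4}$ as in \cite{BoGaOr}, so that $\Phi ^{\prime }\geqq \theta \left\vert \Phi \right\vert +1/2$ lets the ellipticity absorb the term $C_{1}\left\vert \nabla T_{k}(u_{n})\right\vert ^{p}\left\vert \varphi _{n}\right\vert $. You name the right tool but deploy it in the wrong place and on the wrong quantity; as written, neither the a priori estimate nor the $L^{1}$ convergence of the absorption term is established.
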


\begin{remark}
The result was known in the case where $H(x,u,\nabla u)=\left\vert \nabla
u\right\vert ^{q}$, $p=2,$ and $\mu \in L^{1}\left( \Omega \right) $ (see
for example \cite{AbPePr}, where the existence for any $\mu \in \mathcal{M}%
_{b}^{2+}(\Omega )$ is also claimed, without proof). For $p\neq 2,$ the case
$q<p,$ $\mu \in L^{1}\left( \Omega \right) $ is partially treated in \cite%
{PerPr}.
\end{remark}

\begin{proof}
Let $\mu =f- {div}g$ with $f\in L^{1+}(\Omega )$ and $g=(g_{i})\in
(L^{p^{\prime }}(\Omega ))^{N}.$ Here again we use the good approximation of
$\mu $ by a sequence of measures $\mu _{n}\in \mathcal{M}_{b}^{+}(\Omega )$
given at Lemma \ref{app} (ii), $\lambda _{n}=0,$ thus $\mu _{n}=\mu
_{n}^{0}=f_{n}- {div}g_{n},$ with $f_{n}\in \mathcal{D}^{+}\left( \Omega
\right) $ and $g_{n}=(g_{n,i})\in (\mathcal{D}\left( \Omega \right) ^{N}).$
Hence there exists a weak nonnegative solution $u_{n}\in W_{0}^{1,p}\left(
\Omega \right) $ of the problem
\begin{equation*}
-\Delta _{p}u_{n}+H(x,u_{n},\nabla u_{n})=\mu _{n}\hspace{0.5cm}\text{in }%
\Omega ,\qquad u_{n}=0\quad \text{on }\partial \Omega .
\end{equation*}%
Since $H(x,u,\xi )\geqq 0,$ taking $\varphi =k^{-1}T_{k}(u_{n}-m)$ with $%
m\geq 0,$ $k>0,$ as a test function, we still obtain (\ref{xyz}). From Lemma %
\ref{ldmop}, up to a subsequence, $(u_{n})$ converges $a.e.$ to a function $%
u,$ $\left( T_{k}(u_{n})\right) $ converges weakly in $W_{0}^{1,p}\left(
\Omega \right) $, $\left( \nabla u_{n}\right) $ converges $a.e.$ to $\nabla
u,$ and $\left( u_{n}^{p-1}\right) $ converges strongly in $L^{\sigma
}(\Omega ),$ for any $\sigma \in \left[ 1,N/(N-p)\right) .$ Thus $%
\lim_{k\rightarrow \infty }\sup_{n\in \mathbb{N}}\left\vert \left\{
u_{n}>k\right\} \right\vert =0,$ and $\left( \left\vert \nabla
u_{n}\right\vert ^{p-1}\right) $ converges strongly in $L^{\tau }(\Omega ),$
for any $\tau \in \left[ 1,N/(N-1)\right) .$ Moreover the choice of $\varphi
$ with $m+k>L$ gives
\begin{equation*}
\frac{1}{k}\int_{\left\{ m\leqq u\leqq m+k\right\} }\left\vert \nabla
u_{n}\right\vert ^{p}dx+C_{0}\int_{\left\{ u_{n}\geq m+k\right\} }\left\vert
\nabla u_{n}\right\vert ^{q}dx\leqq \mu _{n}(\Omega )\leqq 4\mu (\Omega ).
\end{equation*}%
Taking $m=0$ we obtain
\begin{equation*}
\int_{\Omega }\left\vert \nabla u_{n}\right\vert ^{q}dx\leq \int_{\left\{
u_{n}\geq k\right\} }\left\vert \nabla u_{n}\right\vert ^{q}dx+\int_{\Omega
}\left\vert \nabla T_{k}(u_{n})\right\vert ^{q}dx\leq 4C_{0}^{-1}\mu (\Omega
)+\int_{\Omega }\left\vert \nabla T_{k}(u_{n})\right\vert ^{p}dx+\left\vert
\Omega \right\vert
\end{equation*}%
since $q\leqq p;$ thus from the Fatou Lemma, $\left\vert \nabla u\right\vert
^{q}\in L^{1}\left( \Omega \right) .$ Moreover using $\varphi
=T_{1}(u_{n}-k),$
\begin{equation*}
\int_{\left\{ k-1\leqq u_{n}\leqq k\right\} }\left\vert \nabla
u_{n}\right\vert ^{p}dx+\int_{\left\{ u_{n}\geq k\right\} }H(x,u_{n},\nabla
u_{n})dx\leqq \int_{\left\{ u_{n}\geq k-1\right\} }f_{n}dx+\int_{\left\{
k-1\leq u_{n}\leq k\right\} }\left\vert g_{n}.\nabla u_{n}\right\vert dx.
\end{equation*}%
Therefore, from the H\"{o}lder inequality,
\begin{equation*}
\int_{\left\{ k-1\leq u_{n}\leq k\right\} }\left\vert \nabla
u_{n}\right\vert ^{p}dx+p^{\prime }\int_{\left\{ u_{n}\geq k\right\}
}H(x,u_{n},\nabla u_{n})dx\leq \int_{\left\{ u_{n}\geq k-1\right\}
}f_{n}dx+(\sum_{i=1}^{N}\int_{\left\{ k-1\leq u_{n}\leq k\right\}
}\left\vert g_{n,i}\right\vert ^{p^{\prime }}dx).
\end{equation*}%
From Lemma \ref{app}, there holds
\begin{equation}
\lim_{k\rightarrow \infty }\sup_{n\in \mathbb{N}}(\int_{\left\{ k-1\leq
u_{n}\leq k\right\} }\left\vert \nabla u_{n}\right\vert ^{p}dx+\int_{\left\{
u_{n}\geq k\right\} }H(x,u_{n},\nabla u_{n})dx)=0.  \label{rou}
\end{equation}%
\medskip

Next we prove the strong convergence of the truncates in $W_{0}^{1,p}\left(
\Omega \right) $ as in \cite{BoGaOr}: we take as test function
\begin{equation*}
\varphi _{n}=\Phi (T_{k}(u_{n})-T_{k}(u)),\text{ where }\Phi (s)=se^{\theta
^{2}s^{2}/4},
\end{equation*}%
where $\theta >0$ will be chosen after, thus $\Phi ^{\prime }(s)\geq \theta
\left\vert \Phi (s)\right\vert +1/2.$ Then $\varphi _{n}\in
W_{0}^{1,p}\left( \Omega \right) \cap L^{\infty }\left( \Omega \right) ,$
and we have $\left\vert \varphi _{n}\right\vert \leq \Phi (k);$ setting $%
\psi _{n}=\Phi ^{\prime }(T_{k}(u_{n})-T_{k}(u)),$ we have $0\leq \psi
_{n}\leq \Phi ^{\prime }(k).$ Then $\varphi _{n}\rightarrow 0,$ $\psi
_{n}\rightarrow 1$ in $L^{\infty }\left( \Omega \right) $ weak * and $a.e.$
in $\Omega .$ We set $a(\xi )=\left\vert \xi \right\vert ^{p-2}\xi ,$ and
\begin{equation*}
X=\int_{\Omega }(a(\nabla (T_{k}(u_{n}))-a(\nabla (T_{k}(u))).\nabla
(T_{k}(u_{n})-T_{k}(u))\psi _{n}dx,
\end{equation*}%
and get
\begin{equation*}
X+I_{1}=I_{2}+I_{3}+I_{4},
\end{equation*}
with
\begin{equation*}
I_{1}=\int_{\Omega }H(x,u_{n},\nabla u_{n})\varphi _{n}dx,\qquad
I_{2}=\int_{\Omega }a(\nabla (T_{k}(u)).\nabla (T_{k}(u)-T_{k}(u_{n}))\psi
_{n}dx,
\end{equation*}%
\begin{equation*}
I_{3}=\int_{\Omega }f_{n}\varphi _{n}dx+\int_{\Omega } {div}%
(g_{n}-g)\varphi _{n}dx+\int_{\Omega }g.\nabla (T_{k}(u_{n})-T_{k}(u))\psi
_{n}dx,
\end{equation*}%
\begin{equation*}
I_{4}=-\int_{\Omega }a(\nabla (u_{n}-T_{k}(u_{n})).\nabla
(T_{k}(u_{n})-T_{k}(u))\psi _{n}dx=\int_{\left\{ u_{n}\geq k\right\}
}a(\nabla (u_{n}-T_{k}(u_{n})).\nabla (T_{k}(u))\psi _{n}dx.
\end{equation*}%
One can easily see that $\left\vert I_{2}\right\vert +\left\vert
I_{3}\right\vert +$ $\left\vert I_{4}\right\vert =o(n)$. Since $%
H(x,u_{n},\nabla u_{n})\geqq 0$ for $u_{n}\geq k,$ then $X\leqq I_{5}+o(n),$
where
\begin{equation*}
I_{5}=\left\vert \int_{\left\{ u_{n}<k\right\} }H(x,u_{n},\nabla
u_{n})\varphi _{n}dx\right\vert \leq C_{1}\int_{\Omega }\left\vert \nabla
(T_{k}u_{n})\right\vert ^{p})\left\vert \varphi _{n}\right\vert
dx+\int_{\Omega }l\left\vert \varphi _{n}\right\vert dx\leqq
C_{1}(Y+I_{7})+o(n),
\end{equation*}%
with
\begin{equation*}
Y=\int_{\Omega }(a(\nabla (T_{k}(u_{n}))-a(\nabla (T_{k}(u))).\nabla
(T_{k}(u_{n})-T_{k}(u))\left\vert \varphi _{n}\right\vert dx,
\end{equation*}%
\begin{equation*}
I_{7}=\int_{\Omega }a(\nabla (T_{k}(u))).\nabla
(T_{k}(u_{n})-T_{k}(u))\left\vert \varphi _{n}\right\vert dx+\int_{\Omega
}(a(\nabla (T_{k}(u_{n})).\nabla (T_{k}(u))\left\vert \varphi
_{n}\right\vert dx
\end{equation*}%
and then $I_{7}=o(n).$ We get finally $X\leqq C_{1}Y+o(n);$ choosing $\theta
=2C_{1},$ we deduce that
\begin{equation*}
\int_{\Omega }(a(\nabla (T_{k}(u_{n}))-a(\nabla (T_{k}(u))).\nabla
(T_{k}(u_{n})-T_{k}(u))dx=o(n).
\end{equation*}%
Hence $\left( T_{k}(u_{n})\right) $ converges strongly to $T_{k}(u)$ in $%
W_{0}^{1,p}\left( \Omega \right) .$ Therefore $H(x,u_{n},\nabla u_{n})$ is
equi-integrable, from (\ref{jin}) and (\ref{rou}), since for any measurable
set $E\subset \Omega ,$
\begin{equation*}
\int_{E}H(x,u_{n},\nabla u_{n})dx\leqq C_{1}\int_{E}\left\vert \nabla
(T_{k}u_{n})\right\vert ^{p}dx+\int_{E}\ell dx+\int_{\left\{ u_{n}\geq
k\right\} }H(x,u_{n},\nabla u_{n})dx.
\end{equation*}%
\medskip Then $(H(x,u_{n},\nabla u_{n}))$ converges to $H(x,u,\nabla u)$
strongly in $L^{1}(\Omega );$ thus $(\mu _{n}-H(x,u_{n},\nabla u_{n}))$ is a
good approximation of $\mu -H(x,u,\nabla u),$ and $u$ is a R-solution of
problem (\ref{PR}) from Theorem \ref{fund}.
\end{proof}

\begin{remark}
In the case $p-1<q<p,$ and if (\ref{jin}) is replaced by
\begin{equation}
0\leqq H(x,u,\xi )\leqq C_{1}\left\vert \xi \right\vert ^{q}+\ell (x),
\end{equation}%
the proof is much shorter: in order to prove the equi-integrability of $%
\left( H(x,u_{n},\nabla u_{n})\right) $ we do not need to prove the strong
convergence of the truncates: indeed for any measurable set $E\subset \Omega
,$
\begin{equation*}
\int_{E}\left\vert \nabla u_{n}\right\vert ^{q}dx\leqq \int_{E}\left\vert
\nabla (T_{k}u_{n})\right\vert ^{q}dx+\int_{\left\{ u_{n}\geq k\right\}
}\left\vert \nabla u_{n}\right\vert ^{q}dx
\end{equation*}%
and $\left( \nabla T_{k}(u_{n})\right) $ converges strongly to $\nabla
T_{k}(u)$ in $L^{q}\left( \Omega \right) $ and (\ref{rou}) holds. Then $%
\left( H(x,u_{n},\nabla u_{n})\right) $ converges to $H(x,u,\nabla u)$
strongly in $L^{1}\left( \Omega \right) .$
\end{remark}

\subsubsection{Case where $\protect\mu $ satisfies (\protect\ref{capa})}

Here we assume that $\mu \in \mathcal{M}_{b}^{+}(\Omega )$ satisfies a
capacity condition of type (\ref{capa}). For simplicity we assume that $\mu $
has a compact support in $\Omega .$ In the sequel we prove the following:

\begin{theorem}
\label{two}Let $1<q\leqq p$ or $p=2.$ Assume that $\mu \in \mathcal{M}%
_{b}^{+}(\Omega ),$ has a compact support and satisfies
\begin{equation}
\mu (K)\leqq C_{1}cap_{1,q\ast }(K,\Omega ),\text{ \quad for any compact }%
K\subset \Omega ,  \label{dac}
\end{equation}%
for some $C_{1}=C(N,q,\Omega )>0$ (non necessarily small). Then there exists
a nonnegative R-solution $u$ of problem (\ref{P4}), such that $\left[
\left\vert \nabla u\right\vert ^{q}\right] _{1,q^{\ast },\Omega }$ is finite.
\end{theorem}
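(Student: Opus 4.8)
The plan is to realize $u$ as a limit of solutions of regularized problems, the decisive ingredient being a uniform capacitary bound on the gradient term; throughout, $q_{\ast}=q/(q+1-p)$ is finite, i.e. $q>p-1$. First I would record, following Maz'ya (see \cite{Maz}), that (\ref{dac}) is equivalent to the trace inequality $\int_{\Omega}\zeta^{q_{\ast}}\,d\mu\leq C\int_{\Omega}|\nabla\zeta|^{q_{\ast}}\,dx$ for all $\zeta\in\mathcal{D}^{+}(\Omega)$, and that this inequality passes, with the same constant, to the mollifications $\rho_{n}\ast\mu$ (apply it to each translate $\zeta(\cdot+z)$ and average against $\rho_{n}$, using translation invariance of $\int|\nabla\zeta|^{q_{\ast}}$; the compact support of $\mu$ keeps everything inside $\Omega$). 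Then I would take the good approximation $(\mu_{n})$ of Lemma \ref{app}(ii) with $\mu_{n}\in\mathcal{D}^{+}(\Omega)$, nondecreasing, $|\mu_{n}|(\Omega)\leq 4\mu(\Omega)$; by the above it satisfies the trace inequality uniformly in $n$. Since $0$ is a subsolution and the nonnegative solution $\bar u_{n}\in W_{0}^{1,p}(\Omega)\cap W^{1,\infty}(\Omega)$ of $-\Delta_{p}\bar u_{n}=\mu_{n}$ is a supersolution, the problem
\begin{equation*}
-\Delta_{p}u_{n}+|\nabla u_{n}|^{q}=\mu_{n}\quad\text{in }\Omega,\qquad u_{n}=0\quad\text{on }\partial\Omega
\end{equation*}
has a weak solution $u_{n}\in W_{0}^{1,p}(\Omega)$ with $0\leq u_{n}\leq\bar u_{n}$: for $p-1<q\leq p$ by \cite{BoMuPu} (natural growth of $|\xi|^{q}$ and the sign condition $|\nabla u_{n}|^{q}u_{n}\geq 0$ on $[0,\bar u_{n}]$), and for $p=2<q$ by the classical gradient estimates for the viscous Hamilton--Jacobi equation with smooth bounded data.

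\textbf{A priori bounds.} Using $T_{k}(u_{n})$ as a test function and discarding the nonnegative absorption term yields $\int_{\Omega}|\nabla T_{k}(u_{n})|^{p}\,dx\leq Ck$, hence the Marcinkiewicz estimates of Lemma \ref{ldmop}; using $T_{1}(u_{n}-(k-1))$ and Young's inequality, exactly as in the proof of Theorem \ref{one}, gives the tail estimate $\lim_{k\to\infty}\sup_{n}\int_{\{u_{n}\geq k\}}|\nabla u_{n}|^{q}\,dx=0$. The crucial estimate comes from testing with $\zeta^{q_{\ast}}$ ($\zeta\in\mathcal{D}^{+}(\Omega)$) and arguing as in Lemma \ref{much}(ii) --- valid since $|\nabla u_{n}|^{q}\in L^{1}(\Omega)$ for each fixed $n$, and since $q>p-1$ forces $(p-1)/q<1$ so that $q_{\ast}\int|\nabla u_{n}|^{p-1}\zeta^{q_{\ast}-1}|\nabla\zeta|$ is absorbed by Young's inequality --- together with the uniform trace inequality for $\mu_{n}$:
\begin{equation*}
\int_{\Omega}|\nabla u_{n}|^{q}\zeta^{q_{\ast}}\,dx\leq C\left(\int_{\Omega}\zeta^{q_{\ast}}\,d\mu_{n}+\int_{\Omega}|\nabla\zeta|^{q_{\ast}}\,dx\right)\leq C'\int_{\Omega}|\nabla\zeta|^{q_{\ast}}\,dx .
\end{equation*}
Taking the infimum over admissible $\zeta$ above a compact set gives $[\,|\nabla u_{n}|^{q}\,]_{1,q^{\ast},\Omega}\leq C'$ uniformly in $n$, and since $Cap_{1,q^{\ast}}(\overline{\Omega},\mathbb{R}^{N})<\infty$ also $\int_{\Omega}|\nabla u_{n}|^{q}\,dx\leq C''$ uniformly; in particular $\mu_{n}-|\nabla u_{n}|^{q}$ is bounded in $\mathcal{M}_{b}(\Omega)$.

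\textbf{Passage to the limit.} By Lemma \ref{ldmop}(ii) applied to $-\Delta_{p}u_{n}=\mu_{n}-|\nabla u_{n}|^{q}$, along a subsequence $u_{n}\to u$ and $\nabla u_{n}\to\nabla u$ a.e., and $T_{k}(u_{n})\rightharpoonup T_{k}(u)$ in $W_{0}^{1,p}(\Omega)$; since $u_{n}\geq 0$, $u\geq 0$. To upgrade this to $|\nabla u_{n}|^{q}\to|\nabla u|^{q}$ in $L^{1}(\Omega)$: for $p-1<q\leq p$ I would first prove $T_{k}(u_{n})\to T_{k}(u)$ strongly in $W_{0}^{1,p}(\Omega)$ using the test function $\Phi(T_{k}(u_{n})-T_{k}(u))$ with $\Phi(s)=se^{\theta^{2}s^{2}/4}$, as in the proof of Theorem \ref{one} (legitimate since on $\{u_{n}<k\}$ the term $|\nabla T_{k}(u_{n})|^{q}$ has at most $p$-growth), and combine it with the tail estimate to get equi-integrability of $|\nabla u_{n}|^{q}$; for $p=2<q$ this fails --- the absorption term is then superlinear in $\nabla u_{n}$ --- and I would instead bootstrap the capacitary estimate through the linear elliptic regularity in Lorentz--Morrey spaces, in the spirit of \cite{Ph}, to upgrade the local integrability of $\nabla u_{n}$ and recover the equi-integrability. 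In either case $\mu_{n}-|\nabla u_{n}|^{q}$ is a good approximation of $\mu-|\nabla u|^{q}$, so Theorem \ref{fund} gives that $u$ is a nonnegative R-solution of (\ref{P4}), and passing to the liminf in the crucial estimate gives $[\,|\nabla u|^{q}\,]_{1,q^{\ast},\Omega}<\infty$.

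\textbf{Main obstacle.} The heart of the argument is the uniform capacitary bound of the second step: it is exactly what singles out the exponent $q_{\ast}$ and what, through the hypothesis $q\leq p$, closes by Young's inequality (the mollification-stability of the trace inequality being a small but necessary point). The other delicate matter is the strong $L^{1}$ convergence of the absorption term: routine via strong convergence of the truncates when $q\leq p$, but genuinely requiring a regularity bootstrap when $p=2<q$.
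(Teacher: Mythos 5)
Your overall architecture matches the paper's: approximate $\mu$, derive a uniform capacitary bound $\int_{\Omega}|\nabla u_{n}|^{q}\xi^{q_{\ast}}dx\leq C\int_{\Omega}|\nabla \xi|^{q_{\ast}}dx$ from the trace inequality (which does pass to the mollifications with the same constant --- the paper obtains this by convexity through the potential characterization (\ref{jj}) of Remark \ref{equi}), and conclude by stability. But the decisive step --- strong $L^{1}(\Omega)$ convergence of $|\nabla u_{n}|^{q}$ --- is exactly where your argument has a gap. For $p-1<q\leq p$ you propose to reuse the strong convergence of truncates from Theorem \ref{one}; that argument requires $\mu\in\mathcal{M}_{b}^{p}(\Omega)$, i.e. $\mu=f-\mathrm{div}\,g$ with $f_{n}\to f$ weakly in $L^{1}$ and $g_{n}\to g$ strongly in $(L^{p'}(\Omega))^{N}$, so that the term $\int\varphi_{n}\,d\mu_{n}$ with $\varphi_{n}=\Phi(T_{k}(u_{n})-T_{k}(u))\rightharpoonup 0$ can be shown to vanish. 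Here, since $q_{\ast}\geq p$ when $q\leq p$, a measure satisfying (\ref{dac}) may well be $p$-singular (for $q_{\ast}>N$ even a Dirac mass qualifies), $\mu_{n}=\rho_{n}\ast\mu$ converges only narrowly, and $\int\varphi_{n}\,d\mu_{n}$ need not tend to $0$: the Boccardo--Gallouet--Orsina scheme does not close. For $p=2<q$ you explicitly defer to an unspecified ``Lorentz--Morrey bootstrap''. The paper's actual mechanism, which covers both cases at once, is Lemma \ref{conv}: a.e. convergence of $\nabla u_{n}$ together with the uniform bound (\ref{lop}) forces strong $L^{1}$ convergence of $|\nabla u_{n}|^{q}$, proved by comparing $w_{n}=|\nabla u_{n}|^{q}$ with its Green potential in a large ball and invoking \cite[Corollary 2.5]{Ph} plus Egoroff. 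That lemma is the missing ingredient, not a routine detail.

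A second, more technical gap: you only test with $\zeta\in\mathcal{D}^{+}(\Omega)$, but both the conclusion that $\left[\left\vert \nabla u\right\vert^{q}\right]_{1,q^{\ast},\Omega}$ is finite (a supremum over compacts of $\mathbb{R}^{N}$, including those meeting $\partial\Omega$) and the hypotheses of the convergence lemma require the estimate for $\xi\in\mathcal{D}^{+}(\mathbb{R}^{N})$ not vanishing on $\partial\Omega$; your uniform bound on $\int_{\Omega}|\nabla u_{n}|^{q}dx$ via $Cap_{1,q_{\ast}}(\overline{\Omega},\mathbb{R}^{N})<\infty$ already uses such a $\xi$ implicitly. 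Justifying the integration by parts for such $\xi$ produces a boundary term; the paper handles it by passing through the regularized problems $u_{n,\varepsilon}$ and using that the conormal derivative of a nonnegative solution vanishing on $\partial\Omega$ is nonpositive, which yields (\ref{ineg}). Note finally that the sequence of Lemma \ref{app}(ii) is not $\rho_{n}\ast\mu$, so your translation-averaging argument for the stability of the trace inequality does not apply to the sequence you actually chose; one should take $\mu_{n}=\rho_{n}\ast\mu$ directly, which is a good approximation precisely because $\mu$ has compact support.
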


First recall some equivalent properties of measures, see \cite[Theorem 1.2]%
{MazVe}, \cite[Lemma 3.3]{HaMaVe}, see also \cite{Ph}:

\begin{remark}
\label{equi}1) Let $\mu \in \mathcal{M}_{b}^{+}(\Omega ),$ extended by $0$
to $\mathbb{R}^{N}.$ Then (\ref{dac}) holds if and only if there exists $%
C_{2}>0$ such that
\begin{equation}
\int_{\Omega }\zeta ^{q_{\ast }}d\mu \leq C_{2}\int_{\Omega }\left\vert
\nabla \zeta \right\vert ^{q_{\ast }}dx,\qquad \forall \zeta \in \mathcal{D}%
^{+}(\Omega );  \label{doc}
\end{equation}
the constants of equivalence between $C_{1},C_{2}$ only depend on $N,q_{\ast
},\Omega .$

If moreover $\mu $ has a compact support $K_{0}\subset \Omega ,$ then (\ref%
{dac}) holds if and only if there exists $C_{3}>0$ such that
\begin{equation}
\mu (K)\leqq C_{3}Cap_{1,q_{\ast }}(K,\mathbb{R}^{N})\qquad \text{for any
compact }K\subset \mathbb{R}^{N};  \label{duc}
\end{equation}
the constants of equivalence between $C_{1},C_{3}$ only depend on $N,q_{\ast
},K_{0}$.

2) Let $\nu \in \mathcal{M}_{b}^{+}(\mathbb{R}^{N}).$ Then (\ref{duc}) holds
if and only if there exists $C_{4}>0$ such that $J_{1}(\nu )$ is finite $%
a.e. $ and
\begin{equation}
J_{1}((J_{1}(\nu ))^{q_{\ast }})\leqq C_{4}J_{1}(\nu )\qquad a.e.\text{ in }%
\mathbb{R}^{N};  \label{jj}
\end{equation}
the constants of equivalence between $C_{3},C_{4}$ do not depend on $\nu .$
\end{remark}

Following the ideas of \cite[Theorem 3.4]{Ph} we prove a convergence Lemma:

\begin{lemma}
\label{conv} Let $\left( z_{n}\right) $ be a sequence of nonnegative
functions, converging $a.e.$ in $L^{1}(\Omega ).$ Extending $z_{n}$ by $0$
in $\mathbb{R}^{N}\backslash \Omega ,$ assume that for some $C>0,$
\begin{equation*}
\int_{\Omega }z_{n}^{\frac{q}{p-1}}\xi ^{q_{\ast }}dx\leqq C\int_{\Omega
}\left\vert \nabla \xi \right\vert ^{q_{\ast }}dx\qquad \forall n\in \mathbb{%
N},\forall \xi \in \mathcal{D}^{+}(\mathbb{R}^{N}).
\end{equation*}%
Then $\left( z_{n}\right) $ converges strongly in $L^{q/(p-1)}(\Omega )$.
\end{lemma}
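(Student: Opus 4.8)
The hypothesis gives a uniform bound of Morrey--Adams type on the sequence $(z_n^{q/(p-1)})$, namely $\int z_n^{q/(p-1)}\xi^{q_\ast}\,dx\leq C\int|\nabla\xi|^{q_\ast}\,dx$ for all test functions $\xi$, which by the equivalence in Remark \ref{equi} (applied with $d\mu_n=z_n^{q/(p-1)}\,dx$) says exactly that the measures $z_n^{q/(p-1)}\,dx$ satisfy the capacitary estimate $z_n^{q/(p-1)}\,dx\,(K)\leq C' cap_{1,q_\ast}(K,\Omega)$ uniformly in $n$, and hence $[\,z_n^{q/(p-1)}\,]_{1,q_\ast,\mathbb{R}^N}\leq C''$ uniformly. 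The goal is to upgrade the $a.e.$ convergence of $(z_n)$ in $L^1(\Omega)$ to strong convergence of $(z_n)$ in $L^{q/(p-1)}(\Omega)$; by Vitali's theorem this amounts to proving that the family $(z_n^{q/(p-1)})$ is equi-integrable on $\Omega$.

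First I would record that $z_n^{q/(p-1)}\to z^{q/(p-1)}$ $a.e.$ in $\Omega$, since $z_n\to z$ $a.e.$; so the only issue is uniform integrability. Fix a measurable set $E\subset\Omega$. The idea, following \cite[Theorem 3.4]{Ph}, is to estimate $\int_E z_n^{q/(p-1)}\,dx$ using the capacitary bound: cover the situation by noting that for any Borel set $E$,
\begin{equation*}
\int_E z_n^{q/(p-1)}\,dx\leq C'' cap_{1,q_\ast}(E,\Omega),
\end{equation*}
but a bare set of small measure need not have small capacity, so this is not yet enough. The remedy is a standard truncation-of-the-potential argument: write $\nu_n=z_n^{q/(p-1)}\,dx$, extended by $0$ to $\mathbb{R}^N$; by Remark \ref{equi}(2) the uniform bound \eqref{duc} for $\nu_n$ is equivalent to the pointwise inequality $J_1((J_1(\nu_n))^{q_\ast})\leq C_4 J_1(\nu_n)$ $a.e.$, with $C_4$ independent of $n$. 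This self-improving Wolff/Havin--Maz'ya type inequality lets one control the $L^{q_\ast'}$-norm, hence a slightly better integrability, of $J_1(\nu_n)$ near the part of $\Omega$ that carries $\nu_n$, and combined with the $a.e.$ convergence this gives equi-integrability of $(\nu_n)$.

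Concretely, the key steps in order: (1) translate the hypothesis into a uniform capacitary bound on $\nu_n=z_n^{q/(p-1)}\,dx$ via Remark \ref{equi}, and then into the potential inequality \eqref{jj} for $\nu_n$; (2) use \eqref{jj} to show the potentials $J_1(\nu_n)$ are bounded, uniformly in $n$, in some $L^{s}_{loc}$ with $s>q_\ast'$, equivalently that $\nu_n$ lies in a fixed ball of a Morrey space strictly better than the borderline one; (3) deduce from this higher integrability, together with $\nu_n\to z^{q/(p-1)}$ $a.e.$ and Fatou, that $\sup_n\int_E z_n^{q/(p-1)}\,dx\to 0$ as $|E|\to 0$, i.e.\ equi-integrability; (4) conclude by Vitali that $z_n\to z$ strongly in $L^{q/(p-1)}(\Omega)$. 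The main obstacle is step (2)--(3): converting the capacitary/potential bound into genuine equi-integrability. The naive bound $\int_E z_n^{q/(p-1)}\leq C\,cap_{1,q_\ast}(E,\Omega)$ fails because small-measure sets can have large capacity; one must instead exploit the self-improving nature of \eqref{jj} (a gain of integrability $q_\ast\mapsto q_\ast+\varepsilon$ in the Havin--Maz'ya inequality, cf.\ the references \cite{MazVe}, \cite{HaMaVe}, \cite{Ph}) to get a fixed $L^{1+\varepsilon}$-type bound on $z_n^{q/(p-1)}$, after which equi-integrability and Vitali's theorem finish the argument.
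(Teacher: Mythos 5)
Your reduction is aimed at the right target: since $\Omega$ is bounded and $z_n\to z$ a.e., strong convergence in $L^{q/(p-1)}(\Omega)$ is indeed equivalent (Vitali) to equi-integrability of $w_n:=z_n^{q/(p-1)}$, and the translation of the hypothesis into a uniform capacitary bound on the measures $w_n\,dx$ via Remark \ref{equi} is correct. The gap is precisely at the step you flag as the main obstacle, (2)--(3), and the remedy you propose does not work. Neither the capacitary bound nor the potential inequality (\ref{jj}) yields a uniform $L^{1+\varepsilon}$ bound on the densities $w_n$: the trace inequality $\int \xi^{q_{\ast}}\,d\nu\leq C\int|\nabla\xi|^{q_{\ast}}\,dx$ is satisfied, with a fixed constant, by genuinely singular measures (for $q_{\ast}>1$, surface measure on a smooth hypersurface, by the trace theorem), hence also by absolutely continuous measures of the form $M_n\chi_{A_n}\,dx$ with $A_n$ a shell of thickness $t_n\to0$ around such a hypersurface and $M_nt_n$ constant; these have fixed total mass, supports of vanishing Lebesgue measure, and are neither equi-integrable nor bounded in any $L^{1+\varepsilon}$. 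The self-improvement of (\ref{jj}) raises the integrability of the \emph{potential} $J_1(\nu_n)$, but higher integrability of $J_1(\nu_n)$ cannot be transferred back to $\nu_n$ itself (convolution with $G_1$ only smooths), so the chain capacitary bound $\Rightarrow$ better Morrey space $\Rightarrow$ equi-integrability breaks at its last link. In short, the capacitary condition does not see Lebesgue measure, and no amount of self-improvement within the capacitary scale will make it do so.

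The paper's proof uses the potential in an essentially different, and more quantitative, way, which is the ingredient your sketch is missing. With $G$ the Green function of $-\Delta$ on a ball $B\supset\Omega$, one has $|\nabla G(w_n)|\leq CG_1\ast w_n$, and \cite[Corollary 2.5]{Ph} converts the uniform bound on $\left[ w_n\right]_{1,q^{\ast},B}$ into a uniform bound on $\left[\,|\nabla G(w_n)|^{q/(p-1)}\right]_{1,q^{\ast},B}$, hence on $\left\Vert \nabla G(w_n)\right\Vert_{L^{q/(p-1)}(B)}$. Writing $\int_B(w_n-w)\varphi\,dx=\int_B\nabla G(w_n-w)\cdot\nabla\varphi\,dx$, splitting by an Egoroff set and using H\"older with the conjugate exponents $q/(p-1)$ and $q_{\ast}$, one gets $\int_\Omega w_n\,dx\to\int_\Omega w\,dx$ (take $\varphi\equiv1$ on $\Omega$), and then concludes by a.e. convergence plus convergence of the $L^1$ norms (Scheff\'e), rather than by a direct equi-integrability estimate. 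If you want to keep your Vitali framing, you must still pass through some such dual estimate on $\nabla G(w_n)$ in $L^{q/(p-1)}$; the capacitary hypothesis alone is not enough.
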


\begin{proof}
From our assumption, $\left( z_{n}\right) $ is bounded in $%
L^{q/(p-1)}(\Omega ),$ then up to a subsequence, it converges to some $z$
weakly in $L^{q/(p-1)}(\Omega )$ and $a.e.$ in $\Omega .$ Consider a ball $%
B\supset \Omega $ of radius $2diam\Omega $, and denote by $G$ the Green
function associated to $-\Delta $ in $B.$ Set $w_{n}=z_{n}^{q/(p-1)},$ and
extend $w_{n}$ by $0$ to $\mathbb{R}^{N}\backslash \Omega .$ Then for any
compact $K\subset \mathbb{R}^{N},$
\begin{equation*}
\int_{K\cap \Omega }w_{n}dx=\int_{K\cap B}w_{n}dx\leqq CCap_{1,q\ast }(K,%
\mathbb{R}^{N}),
\end{equation*}%
which means that $\left[ w_{n}\right] _{1,q^{\ast },B}$ is bounded, and
\begin{equation*}
\left\vert \nabla G(w_{n})(x)\right\vert \leqq \int_{B}\left\vert \nabla
_{x}G(x,y)\right\vert w_{n}(y)dy\leqq CG_{1}\ast w_{n}(x),
\end{equation*}%
with $C=C(N,$diam$\Omega ).$ In turn from \cite[Corollary 2.5]{Ph}, we get
the upperestimate
\begin{equation*}
\left[ \left\vert \nabla G(w_{n})\right\vert ^{\frac{q}{p-1}}\right]
_{1,q^{\ast },B}\leqq C\left[ \left\vert G_{1}\ast w_{n}\right\vert ^{\frac{q%
}{p-1}}\right] _{1,q^{\ast },B}\leqq C\left[ w_{n}\right] _{1,q^{\ast
},B}^{q/(p-1)}.
\end{equation*}
Therefore $\left( \left\vert \nabla G(w_{n})\right\vert \right) $  is
bounded in $L^{q/(p-1)}(B),$ thus $\left( \left\vert \nabla
G(w_{n}-w)\right\vert \right) $ is bounded in $L^{q/(p-1)}(B).$ Let $\varphi
\in \mathcal{D}\left( B\right) $ and $\varepsilon >0$ be fixed. Since $%
(z_{n})$ converges $a.e.$ to $z$ $,$ from the Egoroff theorem, there exists
a measurable set $\omega _{\varepsilon }\subset B$ such that $\left(
w_{n}\right) $ converges to $w=z^{q/(p-1)}$ uniformly on $\omega
_{\varepsilon },$ and $\left\Vert \left\vert \nabla \varphi \right\vert
\right\Vert _{L^{q^{\ast }}(B\backslash \omega _{\varepsilon })}\leqq
\varepsilon .$ There holds%
\begin{eqnarray*}
\left\vert \int_{\Omega }(w_{n}-w)\varphi dx\right\vert  &=&\left\vert
\int_{B}(w_{n}-w)\varphi dx\right\vert = \\
&=&\left\vert -\int_{B}(\Delta (G(w_{n}-w)\varphi dx\right\vert =\left\vert
-\int_{B}\nabla (G(w_{n}-w).\nabla \varphi dx\right\vert
\end{eqnarray*}%
Considering the two integrals on $B\backslash \omega _{\varepsilon }$ and $%
\omega _{\varepsilon }$ we find $\lim \int_{\Omega }(w_{n}-w)\varphi dx=0.$
Taking $\varphi =1$ on $\Omega ,$ it follows that $\lim \int_{\Omega
}z_{n}^{q/(p-1)}dx=\int_{\Omega }z^{q/(p-1)}dx$ and the proof is
done.\medskip
\end{proof}

\begin{proof}[Proof of Theorem \protect\ref{two}]
From our assumption, $\mu \in \mathcal{M}^{q^{\ast }}(\Omega )$. We consider
the problem associated to $\mu _{n}=\mu \ast \rho _{n}$
\begin{equation}
-\Delta _{p}u_{n}+\left\vert \nabla u_{n}\right\vert ^{q}=\mu _{n}\hspace{%
0.5cm}\text{in }\Omega ,\qquad u_{n}=0\quad \text{on }\partial \Omega .
\label{Pn}
\end{equation}%
For $q\leqq p,$ from \cite[Theorem 2.1]{BoMuPu}, as in the proof of Theorem %
\ref{posi}, (\ref{Pn}) admits a nonnegative solution $u_{n}\in $ $%
W_{0}^{1,p}(\Omega )\cap C^{1,\alpha }(\overline{\Omega })$. Moreover we can
approximate $u_{n}$ in $C^{1,\alpha }(\overline{\Omega })$ by the solution $%
u_{n,\varepsilon }$ ($\varepsilon >0)$ of the problem
\begin{equation*}
- {div}((\varepsilon ^{2}+\left\vert \nabla u_{n,\varepsilon
}\right\vert ^{2})^{\frac{p-2}{2}}\nabla u_{n,\varepsilon })+(\varepsilon
^{2}+\left\vert \nabla u_{n,\varepsilon }\right\vert ^{2})^{\frac{q}{2}}=\mu
_{n}\hspace{0.5cm}\text{in }\Omega ,\qquad u_{n,\varepsilon }=0\quad \text{%
on }\partial \Omega .
\end{equation*}%
Multiplying this equation by $\xi ^{q_{\ast }}$ with $\xi \in \mathcal{D}%
^{+}(\mathbb{R}^{N}),$ we obtain
\begin{eqnarray*}
&&q_{\ast }\int_{\Omega }(\varepsilon ^{2}+\left\vert \nabla
u_{n,\varepsilon }\right\vert ^{2})^{\frac{p-2}{2}}\nabla u_{n,\varepsilon
}.\xi ^{q_{\ast }-1}\nabla \xi dx+\int_{\Omega }(\varepsilon ^{2}+\left\vert
\nabla u_{n,\varepsilon }\right\vert ^{2})^{\frac{q}{2}}\xi ^{q_{\ast }}dx \\
&=&\int_{\Omega }\xi ^{q_{\ast }}\mu _{n}dx+q_{\ast }\int_{\partial \Omega
}\xi ^{q_{\ast }}(\varepsilon ^{2}+\left\vert \nabla u_{n,\varepsilon
}\right\vert ^{2})^{\frac{p-2}{2}}\nabla u_{n,\varepsilon }.\nu ds.
\end{eqnarray*}%
The boundary term is nonpositive, hence going to the limit as $\varepsilon
\rightarrow 0,$ we get
\begin{equation}
\int_{\Omega }\left\vert \nabla u_{n}\right\vert ^{q}\xi ^{q_{\ast }}dx\leqq
\int_{\Omega }\xi ^{q_{\ast }}\mu _{n}dx+q_{\ast }\int_{\Omega }\left\vert
\nabla u_{n}\right\vert ^{p-2}\nabla u_{n}.\xi ^{q_{\ast }-1}\nabla \xi dx
\label{ineg}
\end{equation}%
When $p=2,$ existence also holds for $q>2$, from \cite{Li}; and then $%
u_{n}\in C^{2}\left( \overline{\Omega }\right) $, thus (\ref{ineg}) is still
true. As in Lemma \ref{much}, it follows that for any $\xi \in \mathcal{D}%
^{+}(\mathbb{R}^{N})$
\begin{equation}
\int_{\Omega }\left\vert \nabla u_{n}\right\vert ^{q}\xi ^{q_{\ast }}dx\leqq
C(\int_{\Omega }\xi ^{q_{\ast }}d\mu _{n}+\int_{\Omega }\left\vert \nabla
\xi \right\vert ^{q_{\ast }}dx).  \label{tri}
\end{equation}%
Otherwise, since $\mu _{n}(\Omega )\leqq \mu (\Omega ),$ from Lemma \ref%
{ldmop}, up to a subsequence $(u_{n})$ converges $a.e.$ to a function $u,$ $%
\left( T_{k}(u_{n})\right) $ converges weakly in $W_{0}^{1,p}(\Omega )$ and $%
\left( \nabla u_{n}\right) $ converges $a.e.$ to $\nabla u$ in $\Omega .$
Note also that $\left( \mu _{n}\right) $ is a sequence of good
approximations of $\mu ,$ since $\mu $ has a compact support (see \cite%
{BiHuVe}).\ From (\ref{sec}), for any $\xi \in \mathcal{D}^{+}(\mathbb{R}%
^{N}),$ we have $\lim \int_{\Omega }\xi ^{q_{\ast }}d\mu _{n}=\int_{\Omega
}\xi ^{q_{\ast }}d\mu ,$ since $\xi ^{q_{\ast }}\in C_{c}(\mathbb{R}^{N}).$
Then $\int_{\Omega }\xi ^{q_{\ast }}d\mu \leqq C\int_{\Omega }\left\vert
\nabla \xi \right\vert ^{q_{\ast }}dx.$ From the Fatou Lemma, we obtain%
\begin{equation}
\int_{\Omega }\left\vert \nabla u\right\vert ^{q}\xi ^{q_{\ast }}dx\leqq
C(\int_{\Omega }\xi ^{q_{\ast }}d\mu +\int_{\Omega }\left\vert \nabla \xi
\right\vert ^{q_{\ast }}dx)\leqq C\int_{\Omega }\left\vert \nabla \xi
\right\vert ^{q_{\ast }}dx,  \label{lip}
\end{equation}%
hence $\left\vert \nabla u\right\vert ^{q}\in L^{1}\left( \Omega \right) .$
And then for any compact $K\subset \mathbb{R}^{N},$ taking $\xi =1$ on $K$,
\begin{equation*}
\int_{K\cap \Omega }\left\vert \nabla u\right\vert ^{q}dx\leqq CCap_{1,q\ast
}(K,\mathbb{R}^{N}),
\end{equation*}%
thus $\left[ \left\vert \nabla u\right\vert ^{q}\right] _{1,q^{\ast },\Omega
}$ is finite. Moreover,\textbf{\ }extending\textbf{\ }$\mu $ by $0$ to $%
\mathbb{R}^{N}\backslash \Omega ,$ we see from Remark \ref{equi} that $\mu $
satisfies condition (\ref{doc}), which is equivalent to (\ref{jj}). By
convexity, $\mu _{n}$\textbf{\ }also satisfies (\ref{jj}) and hence (\ref%
{doc}), with the same constants, $i.e.$ for any $n\in \mathbb{N}$ and any $%
\xi \in \mathcal{D}^{+}(\mathbb{R}^{N}),$
\begin{equation}
\int_{\Omega }\xi ^{q_{\ast }}d\mu _{n}\leqq C_{2}\int_{\Omega }\left\vert
\nabla \xi \right\vert ^{q_{\ast }}dx  \label{hyp}
\end{equation}%
Then from (\ref{tri}) with another $C>0,$
\begin{equation}
\int_{\Omega }\left\vert \nabla u_{n}\right\vert ^{q}\xi ^{q_{\ast }}dx\leqq
C\int_{\Omega }\left\vert \nabla \xi \right\vert ^{q_{\ast }}dx  \label{lop}
\end{equation}%
Next we can apply Lemma \ref{conv} to $z_{n}=\left\vert \nabla
u_{n}\right\vert ^{p-1},$ since $\left( \nabla u_{n}\right) $ converges $%
a.e. $ to $\nabla u$ in $\Omega .$ Then $\left( \left\vert \nabla
u_{n}\right\vert ^{q}\right) $ converges strongly in $L^{1}(\Omega )$ to $%
\left\vert \nabla u\right\vert ^{q}.$ Thus $(\mu _{n}-\left\vert \nabla
u_{n}\right\vert ^{q})$ is a good approximation of $(\mu -\left\vert \nabla
u\right\vert ^{q}).$ From Theorem \ref{fund}, $u$ is a R-solution of the
problem.\medskip

From \cite[Theorem 1.4]{JaMaVe}, condition (\ref{lip}) (for $N\geqq 2)$
implies that $q_{\ast }<N,$ that means $q>q_{c},$ or $\left\vert \nabla
u\right\vert ^{q}=0$ in $\Omega ,$ thus $\mu =0.$ If $\mu = {div}g$ with
$g\in (L^{N(q+1-p)/(p-1),\infty }(\Omega ))^{N}$ with compact support, then $%
\left\vert g\right\vert ^{\frac{q}{p-1}}\in L^{N/q_{\ast },\infty }(\Omega
), $ thus
\begin{equation*}
\int_{\Omega }\zeta ^{q_{\ast }}\left\vert g\right\vert ^{\frac{q}{p-1}%
}dx\leq C_{2}\int_{\Omega }\left\vert \nabla \zeta \right\vert ^{q_{\ast
}}dx,\qquad \forall \zeta \in \mathcal{D}^{+}(\Omega ).
\end{equation*}%
Hence $\mu $ satisfies (\ref{doc}) from the H\"{o}lder inequality. Note that
$\mu \in \mathcal{M}^{q_{\ast }}(\Omega ),$ since $q>q_{c}$ implies $%
\left\vert g\right\vert \in L^{q/(p-1)}(\Omega )^{N}$.
\end{proof}

\begin{remark}
Let $q\leqq p$ and $\mu = {div}g,$ where $g$ has a compact support in $%
\Omega .$ From Theorems \ref{one} and \ref{two}, we have existence when $%
g\in (L^{p^{\prime }}(\Omega ))^{N},$ or when $g\in
(L^{N(q+1-p)/(p-1),\infty }(\Omega ))^{N}$. Observe that $L^{p^{\prime
}}(\Omega )\supset L^{N(q+1-p)/(p-1)}(\Omega )$ if and only if $\tilde{q}%
\leqq q,$ where $\tilde{q}$ is defined at (\ref{defi}). Hence Theorem \ref%
{one} brings better results than Theorem \ref{two} when $\tilde{q}\leqq
q\leqq p.$
\end{remark}

\begin{remark}
The extension of this result to the case $p<q,p\neq 2$ will be studied in a
further article.
\end{remark}

\section{Some regularity results}

In this section we give some regularity properties for the problem:
\begin{equation}
-\Delta _{p}u+H(x,u,\nabla u)=0\qquad \text{in }\Omega .  \label{equ}
\end{equation}

We first recall some local estimates of the gradient for renormalized
solutions, see \cite{HaBi}, following the first results of \cite{BoGa}, and
many others, see among them \cite{AlFeTr}, \cite{KiBo}.

\begin{lemma}
\label{boot} Let $u$ be the R-solution of problem
\begin{equation*}
-\Delta _{p}u=f\hspace{0.5cm}\text{in }\Omega ,\qquad u=0\quad \text{on }%
\partial \Omega ,
\end{equation*}%
with $f\in L^{m}(\Omega ),$ $1<m<N.$ Set $\overline{m}=Np/(Np-N+p)=p/\tilde{q%
},$where $\tilde{q}$ is defined in (\ref{defi}). $\medskip $

\noindent (i) If $m>N/p,$ then $u\in L^{\infty }(\Omega ).$ If $m=N/p,$ then
$u\in L^{k}(\Omega )$ for any $k\geqq 1.$ $If$ $m<N/p,$ then $u^{p-1}\in
L^{k}(\Omega )$ for $k=Nm/(N-pm).\medskip $

\noindent (ii) $\left\vert \nabla u\right\vert ^{(p-1)}\in L^{m^{\ast
}}(\Omega ),$ where $m^{\ast }=Nm/(N-m)$. If $\overline{m}\leqq m,$ then $%
u\in W_{0}^{1,p}(\Omega ).\medskip $
\end{lemma}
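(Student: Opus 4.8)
The plan is to establish the estimates first for smooth approximations and then transfer them by stability. Pick $f_n\in C_c^\infty(\Omega)$ with $f_n\to f$ in $L^m(\Omega)$ and let $u_n\in W_0^{1,p}(\Omega)\cap L^\infty(\Omega)$ be the weak solution of $-\Delta_pu_n=f_n$; since $(f_n)$ is a good approximation of $f$ (here $g=0$, no singular part), Theorem \ref{fund} gives, up to a subsequence, $u_n\to u$ and $\nabla u_n\to\nabla u$ a.e., with $u$ the R-solution (unique, as $f\in L^1(\Omega)$). For the $u_n$, which are bona fide $W_0^{1,p}\cap L^\infty$ functions, all the test functions below are admissible; every resulting bound will be uniform in $n$ and will pass to $u$ by lower semicontinuity / Fatou.

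For part (i) I would run the classical Stampacchia/Boccardo--Gallou\"et iteration. Testing the $u_n$-equation with $|u_n|^{\lambda-2}u_n$, $\lambda>1$, gives
\[
(\lambda-1)\int_\Omega|\nabla u_n|^p|u_n|^{\lambda-2}\,dx=\int_\Omega f_n|u_n|^{\lambda-2}u_n\,dx\le\|f_n\|_{L^m(\Omega)}\,\big\|\,|u_n|\,\big\|_{L^{(\lambda-1)m'}(\Omega)}^{\lambda-1};
\]
setting $w_n=|u_n|^{(p+\lambda-2)/p}$, the left-hand side is a constant multiple of $\int|\nabla w_n|^p$, so Sobolev controls $\big\|\,|u_n|\,\big\|_{L^{(p+\lambda-2)N/(N-p)}}^{p+\lambda-2}$ by the right-hand side. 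Starting from $u_n\in L^{Q_c,\infty}(\Omega)$ (hence $L^r$ for $r<Q_c$, uniformly, by Lemma \ref{ldmop}) and choosing $\lambda$ so that $(\lambda-1)m'$ equals the current integrability exponent, one runs the (linear) recursion on exponents; for $m<N/p$ it converges monotonically to the fixed point $(p-1)Nm/(N-pm)$, giving $u_n\in L^{(p-1)Nm/(N-pm)}(\Omega)$, i.e. $|u_n|^{p-1}\in L^{Nm/(N-pm)}(\Omega)$, uniformly. For $m=N/p$ the recursion has no finite fixed point but still advances by a fixed amount each step, so one gets every $L^k(\Omega)$; for $m>N/p$ one instead tests with $G_k(u_n)=(|u_n|-k)_+\operatorname{sign}(u_n)$ and applies Stampacchia's level-set lemma to $|\{|u_n|>k\}|$, the gain exponent being $>1$ precisely because $m>N/p$, to conclude $u_n\in L^\infty(\Omega)$.

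For part (ii), with the integrability of $u_n$ from (i) in hand, I would bound the gradient by the weighted test function $\big((1+|u_n|)^{1-\gamma}-1\big)\operatorname{sign}(u_n)$, $\gamma>0$, which yields $\int_\Omega|\nabla u_n|^p(1+|u_n|)^{-\gamma}\,dx\le C$, finite as soon as $\gamma\ge 1-\theta/m'$ with $\theta=(p-1)Nm/(N-pm)$; then for $1\le\lambda<p$,
\[
\int_\Omega|\nabla u_n|^\lambda\,dx\le\Big(\int_\Omega\frac{|\nabla u_n|^p}{(1+|u_n|)^{\gamma}}\,dx\Big)^{\lambda/p}\Big(\int_\Omega(1+|u_n|)^{\frac{\gamma\lambda}{p-\lambda}}\,dx\Big)^{\frac{p-\lambda}{p}},
\]
and the second factor is finite provided $\gamma\le\theta(p-\lambda)/\lambda$, by (i). A $\gamma$ meeting both constraints exists exactly when $\lambda\le(p-1)Nm/(N-m)=(p-1)m^*$; optimizing $\gamma=\gamma(\lambda)$ this way gives $|\nabla u_n|^{p-1}\in L^{m^*}(\Omega)$ uniformly, hence the same for $u$. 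When $\overline{m}\le m$ one need not use any weight: since $\overline{m}=(p^*)'$, the plain energy identity $\|u_n\|_{W_0^{1,p}(\Omega)}^p=\int_\Omega f_nu_n\,dx\le\|f_n\|_{L^{(p^*)'}(\Omega)}\|u_n\|_{L^{p^*}(\Omega)}\le C\|f_n\|_{L^{\overline{m}}(\Omega)}\|u_n\|_{W_0^{1,p}(\Omega)}$ bounds $u_n$ in $W_0^{1,p}(\Omega)$, whence $u\in W_0^{1,p}(\Omega)$ (alternatively, $T_k(u)\in W_0^{1,p}(\Omega)$ and $T_k(u)\to u$ in $W^{1,p}(\Omega)$ by dominated convergence once $|\nabla u|\in L^p(\Omega)$).

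The routine bookkeeping is the exponent arithmetic (checking the recursion fixed point is $(p-1)Nm/(N-pm)$ and that the $\gamma$-window in (ii) closes exactly at $(p-1)m^*$). The two points needing care are the borderline cases $m=N/p$ and $m=\overline{m}$, and the limit passage itself: the test functions above are not admissible in the renormalized formulation (\ref{norr}) for $u$, so one works on the $u_n$ and transfers — which is legitimate precisely because the estimates are uniform and the structure is stable under the a.e. convergence supplied by Theorem \ref{fund}.
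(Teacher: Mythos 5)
Your part (i) and the sub-$\overline{m}$ half of part (ii) follow the classical Stampacchia/Boccardo--Gallou\"et route, which is essentially what the paper itself does in that range (it uses the test functions $\phi _{\beta ,\varepsilon }(T_{k}(u))$ for $m<\overline{m}$ and refers to the literature for (i)); your exponent arithmetic there is correct (the window for $\gamma$ does close exactly at $\lambda =(p-1)m^{\ast }$), and the approximate-then-pass-to-the-limit scheme via Theorem \ref{fund} is legitimate. However, there is a genuine gap in part (ii) for $\overline{m}<m<N$. Your H\"older interpolation between the weighted energy $\int_{\Omega }|\nabla u_{n}|^{p}(1+|u_{n}|)^{-\gamma }dx$ and the $L^{\theta }$ norm of $u_{n}$ requires $\lambda <p$, so it can never produce gradient integrability beyond $L^{p}$; yet a short computation gives $(p-1)\,\overline{m}^{\ast }=p$, so for $m>\overline{m}$ the asserted exponent $(p-1)m^{\ast }$ is strictly larger than $p$ and lies above the natural energy space. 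Your fallback for $\overline{m}\leqq m$ (the plain energy identity) only recovers $u\in W_{0}^{1,p}(\Omega )$, i.e. $|\nabla u|^{p-1}\in L^{p'}(\Omega )$, which coincides with the claimed $L^{m^{\ast }}(\Omega )$ only at $m=\overline{m}$ exactly. So the statement $|\nabla u|^{p-1}\in L^{m^{\ast }}(\Omega )$ is not proved by your argument on the range $\overline{m}<m<N$.

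The paper closes that range by an entirely different mechanism, and some such input seems unavoidable: one solves the linear problem $-\Delta v=f$, gets $v\in W^{2,m}(\Omega )$ and hence $\nabla v\in L^{m^{\ast }}(\Omega )$ from classical Calder\'on--Zygmund theory, rewrites $f=\mathrm{div}(|F|^{p-2}F)$ with $|F|^{p-2}F=\nabla v$, so that $F\in (L^{\sigma }(\Omega ))^{N}$ with $\sigma =(p-1)m^{\ast }>p$, and then invokes the nonlinear Calder\'on--Zygmund/higher-integrability theory of Iwaniec and Kinnunen--Zhou, which provides a unique solution $w\in W_{0}^{1,\sigma }(\Omega )$ of $-\Delta _{p}w=\mathrm{div}(|F|^{p-2}F)$; by uniqueness of variational solutions $w=u$, giving $|\nabla u|^{p-1}\in L^{m^{\ast }}(\Omega )$. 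You would need to import this (or an equivalent reverse-H\"older/gradient-estimate result) to complete part (ii) above $\overline{m}$.
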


\begin{remark}
The estimates on $u$ and $\left\vert \nabla u\right\vert $ are obtained in
the case $m<\overline{m}$ by using the classical test functions $\phi
_{\beta ,\varepsilon }(T_{k}(u))$, where $\phi _{\beta ,\varepsilon }(w)=$ $%
\int_{0}^{w}(\varepsilon +\left\vert t\right\vert )^{-\beta }dt$, for given
real $\beta <1.$ Let us recall the proof in the case $m\geqq \overline{m},$ $%
p<N.$ Then $L^{m}(\Omega )\subset W^{-1,p^{\prime }}(\Omega ),$ thus, from
uniqueness, $u\in W_{0}^{1,p}(\Omega )$ and $u$ is a variational solution.
If $m=\overline{m},$ then $m^{\ast }=p^{\prime },$ and the conclusion
follows. Suppose $m>\overline{m},$ equivalently $m^{\ast }>p^{\prime }.$ For
any $\sigma >p,$ for any $F\in (L^{\sigma }(\Omega ))^{N},$ there exists a
unique weak solution $w$ in $W_{0}^{1,\sigma }(\Omega )$ of the problem%
\begin{equation*}
-\Delta _{p}w= {div}(\left\vert F\right\vert ^{p-2}F)\hspace{0.5cm}\text{%
in }\Omega ,\qquad w=0\quad \text{on }\partial \Omega ,
\end{equation*}%
see \cite{Iw}, \cite{KiZh}, \cite{KiZh2}. Let $v$ be the unique solution in $%
W_{0}^{1,1}(\Omega )$ of the problem%
\begin{equation}
-\Delta v=f\hspace{0.5cm}\text{in }\Omega ,\qquad v=0\quad \text{on }%
\partial \Omega .  \label{lou}
\end{equation}%
Then from the classical Calderon-Zygmund theory, $v\in W^{2,m}(\Omega )$,
then $\left\vert \nabla v\right\vert \in L^{m^{\ast }}(\Omega ).$ Let $F$ be
defined by $\left\vert F\right\vert ^{p-2}F=\nabla v.$ Then $F\in (L^{\sigma
}(\Omega ))^{N},$ with $\sigma =(p-1)m^{\ast }>p.$ Then $-\Delta
_{p}w=-\Delta v=f,$ thus $w=u.$ Then $u\in W_{0}^{1,\sigma }(\Omega ),$ thus
$\left\vert \nabla u\right\vert ^{(p-1)}\in L^{m^{\ast }}(\Omega ).$
\end{remark}

We also obtain local estimates:

\begin{lemma}
\label{lem}Let $u\in W_{loc}^{1,p}(\Omega )$ such that
\begin{equation*}
-\Delta _{p}u=f\hspace{0.5cm}\text{in }\Omega ,
\end{equation*}%
with $f\in L_{loc}^{m}(\Omega ),$ $1<m<N,$ and $m>\overline{m}.$ Then $%
\left\vert \nabla u\right\vert ^{p-1}\in L_{loc}^{m^{\ast }}(\Omega ).$
Furthermore, for any balls $B_{1}\subset \subset B_{2}\subset \subset \Omega
,$ $\left\Vert \left\vert \nabla u\right\vert ^{p-1}\right\Vert _{L^{m^{\ast
}}\left( \overline{B_{1}}\right) }$ is bounded by a constant which depends
only on $N,p,B_{1},B_{2}$ and $\left\Vert u\right\Vert _{W^{1,p}\left(
B_{2}\right) }.$
\end{lemma}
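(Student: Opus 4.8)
The plan is to localize the global bootstrap argument of Lemma \ref{boot}(ii) by freezing a cutoff function and transferring the equation on a slightly larger ball to a genuine Dirichlet problem. First I would fix balls $B_1\subset\subset B'\subset\subset B_2\subset\subset\Omega$, choose $\eta\in\mathcal{D}(B_2)$ with $\eta\equiv 1$ on $B'$ and $0\le\eta\le 1$, and set $w=\eta u\in W^{1,p}_0(B_2)$. Since $u\in W^{1,p}_{loc}(\Omega)$ solves $-\Delta_p u=f$ in $\Omega$, the function $w$ satisfies, in $\mathcal{D}'(B_2)$,
\begin{equation*}
-\operatorname{div}(\eta^{p-1}|\nabla u|^{p-2}\nabla u)=\eta^{p-1}f-(p-1)\eta^{p-2}|\nabla u|^{p-2}\nabla u\cdot\nabla\eta=:\tilde f_1+\operatorname{div}\tilde F,
\end{equation*}
and one rewrites $-\Delta_p w$ by adding and subtracting the lower-order terms coming from $\nabla(\eta u)=\eta\nabla u+u\nabla\eta$; the net effect is that $w\in W^{1,p}_0(B_2)$ is a weak solution of $-\Delta_p w=\tilde f$ in $B_2$, $w=0$ on $\partial B_2$, where $\tilde f$ is a sum of a term in $L^m(B_2)$ (namely $\eta^{p}f$) and terms built from $|\nabla u|^{p-1}$, $|u|^{p-1}$ and derivatives of $\eta$. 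By the Sobolev embedding $u\in L^{p^\ast}_{loc}$ and the a priori bound $|\nabla u|^{p-1}\in L^{p'}_{loc}$ (which holds since $u\in W^{1,p}_{loc}$), all these correction terms lie in $W^{-1,p'}(B_2)$ and, more importantly, the $L^m$-regular part dominates in the relevant Lebesgue scale because $m>\bar m$ forces $m^\ast>p'$.

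Next I would run the argument of the Remark following Lemma \ref{boot} on $B_2$: let $v$ solve $-\Delta v=\eta^{p}f$ in $B_2$ with $v=0$ on $\partial B_2$, so by Calderón–Zygmund $v\in W^{2,m}(B_2)$ and $|\nabla v|\in L^{m^\ast}(B_2)$; define $F$ by $|F|^{p-2}F=\nabla v$, so $F\in(L^{(p-1)m^\ast}(B_2))^N$ with $(p-1)m^\ast>p$ since $m>\bar m$. Then $-\Delta_p$ applied to the unique $W^{1,\sigma}_0(B_2)$-solution of $-\Delta_p z=\operatorname{div}(|F|^{p-2}F)$ (with $\sigma=(p-1)m^\ast$) recovers $\eta^{p}f$, and by the $W^{1,\sigma}$-regularity theory for the $p$-Laplacian with divergence data (Iwaniec, Kinnunen–Zhou) we gain $|\nabla z|\in L^{\sigma/(p-1)}(B_2)=L^{m^\ast}(B_2)$. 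Comparing $z$ with $w$ via uniqueness — more precisely, treating the full right-hand side $\tilde f$ as $\eta^{p}f$ plus an error in $L^{p'}$-based negative Sobolev space and absorbing the error by the same divergence-form regularity, since $m^\ast\ge p'$ — one concludes $|\nabla w|^{p-1}\in L^{m^\ast}(B_2)$, hence $|\nabla u|^{p-1}=|\nabla w|^{p-1}\in L^{m^\ast}(B_1)$ because $\eta\equiv 1$ on $B'\supset B_1$. The quantitative bound follows by tracking constants: the $W^{2,m}(B_2)$-norm of $v$ is controlled by $\|\eta^p f\|_{L^m(B_2)}\le C(B_2)\|f\|_{L^m(B_2)}$, the $W^{1,\sigma}$-estimate is scale-invariant on $B_2$, and the error terms are bounded by $\|u\|_{W^{1,p}(B_2)}$ through Sobolev embedding and the standard Marcinkiewicz gradient estimate; all constants depend only on $N,p,B_1,B_2$.

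The main obstacle I anticipate is the bookkeeping in the comparison step: the localized right-hand side $\tilde f$ is not purely in $L^m$ but carries genuinely nonlinear correction terms $\eta^{p-2}|\nabla u|^{p-2}\nabla u\cdot\nabla\eta$ and a zeroth-order piece, so one cannot directly invoke Lemma \ref{boot} as a black box and must instead split $\tilde f=\tilde f_{\mathrm{good}}+\operatorname{div}\tilde F$ with $\tilde f_{\mathrm{good}}\in L^m$ and $\tilde F\in(L^{p'})^N$, then observe that since $m>\bar m$ we have $m^\ast>p'$ so the $L^{p'}$-divergence part only contributes $|\nabla u|^{p-1}\in L^{p'}\subset L^{m^\ast}_{\mathrm{weak}}$-type terms that are already under control — the subtle point being to make sure the exponent in the divergence-data regularity theory ($W^{1,\sigma}$ with $\sigma>p$) is not degraded by these lower-order contributions. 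A clean way around this is to first establish $|\nabla u|^{p-1}\in L^{m^\ast}_{loc}$ purely qualitatively by iterating the above on a chain of intermediate balls (at each stage the right-hand side improves from $L^{p'}$-data toward $L^{m^\ast}$-data), and only afterward extract the quantitative estimate on the fixed pair $B_1\subset\subset B_2$ using a two-ball argument with a single cutoff; this separates the "gain of integrability" from the "constant tracking" and keeps each step routine.
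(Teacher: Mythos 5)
Your first half — representing $f$ locally as $-\operatorname{div}(|F|^{p-2}F)$ via the Newtonian potential $v$ of (a localization of) $f$, with $F$ defined by $|F|^{p-2}F=\nabla v$ so that $F\in (L^{\sigma})^{N}$, $\sigma=(p-1)m^{\ast}>p$ — is exactly the paper's starting point. The gap is in your localization and comparison step. You set $w=\eta u$, compare $w$ with a global Dirichlet solution on $B_{2}$, and treat the cutoff commutator as an error $\operatorname{div}\tilde F$ with $\tilde F\in (L^{p'})^{N}$ to be "absorbed". But the divergence-data regularity theory you invoke (Iwaniec, Kinnunen--Zhou) yields $\nabla w\in L^{\tau}$ only when the \emph{entire} datum is $\operatorname{div}(|G|^{p-2}G)$ with $G\in L^{\tau}$; it gives no gain over the integrability of the data. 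Since a priori you only know $|\nabla u|^{p-1}\in L^{p'}_{loc}$, the commutator caps the conclusion at $|\nabla w|^{p-1}\in L^{p'}$, which is where you started (and $p'<m^{\ast}$ precisely because $m>\overline{m}$). The commutator cannot be demoted to a zeroth-order term in some better Lebesgue space either: it is a difference of fluxes, hence genuinely of divergence form. Your proposed repair — iterating over a chain of intermediate balls — does not escape this: at stage $k$ the commutator is supported in an annulus where only the previous integrability $L^{s_{k}}$ of $|\nabla u|^{p-1}$ is available, so the step returns $|\nabla u|^{p-1}\in L^{\min(m^{\ast},s_{k})}=L^{s_{k}}$ and the sequence is stationary at $s_{0}=p'$. (Contrast this with the bootstrap in Theorem \ref{pre}, where the strict gain comes from the superlinear relation $|f|\lesssim|\nabla u|^{q}$ with $q<\tilde q$, not from divergence-form data.)

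The paper avoids the cutoff altogether: it applies the \emph{local} (interior) higher-integrability estimate of Kinnunen--Zhou \cite{KiZh} directly to $u$, which solves $-\Delta_{p}u=\operatorname{div}(|F|^{p-2}F)$ on $B_{2}$ with $F\in (L^{\sigma}(B_{2}))^{N}$, $\sigma=(p-1)m^{\ast}>p$. That theorem is proved by Caccioppoli and reverse H\"older inequalities and is intrinsically interior, so it delivers $u\in W^{1,\sigma}(B_{1})$ with $\left\Vert u\right\Vert_{W^{1,\sigma}(B_{1})}$ controlled by $\left\Vert u\right\Vert_{W^{1,p}(B_{2})}$ and $\left\Vert F\right\Vert_{L^{\sigma}(B_{2})}$ — there is no commutator to absorb, and $\left\Vert F\right\Vert_{L^{\sigma}(B_{2})}$ is controlled by $\left\Vert f\right\Vert_{L^{m}(B_{2})}$ through Calder\'on--Zygmund. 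To make your architecture work you would have to replace the global comparison by this interior estimate (or reprove it); as written, the absorption/iteration step fails.
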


\begin{proof}
We consider again the function $v$ defined in (\ref{lou}), and set $%
\left\vert F\right\vert ^{p-2}F=\nabla v.$ Then $F\in (L^{\sigma }(\Omega
))^{N}$ with $\sigma =(p-1)m^{\ast },$ and $u\in W_{loc}^{1,p}(\Omega )$ is
a solution of the problem
\begin{equation*}
-\Delta _{p}u= {div}(\left\vert F\right\vert ^{p-2}F)\hspace{0.5cm}\text{%
in }\Omega .
\end{equation*}%
Then, from \cite{KiZh}, $u\in W_{loc}^{1,\sigma }(\Omega )$ and for any
balls $B_{1}\subset \subset B_{2}\subset \subset \Omega ,$ $\left\Vert
u\right\Vert _{W^{1,\sigma }\left( B_{1}\right) }$ is controlled by the norm
$\left\Vert u\right\Vert _{W^{1,p}\left( B_{2}\right) }.\medskip $
\end{proof}

Next we consider problem (\ref{equ}) in the case $q<\tilde{q},$ where $%
\tilde{q}$ is defined at (\ref{defi}).

\begin{theorem}
\label{pre}Let $0<q<\tilde{q}$, $N\geqq 2.$ Let $H$ be a Caratheodory
function on $\Omega \times \mathbb{R}$ such that
\begin{equation}
\left\vert H(x,u,\xi )\right\vert \leqq g(x)+C\left\vert \xi \right\vert
^{q},  \label{lap}
\end{equation}%
where $g\in L_{loc}^{N+\varepsilon }\left( \Omega \right) ,$ $C>0.$ Let $%
u\in W_{loc}^{1,p}\left( \Omega \right) $ be any weak solution of problem (%
\ref{equ}). Then $u\in C^{1,\alpha }\left( \Omega \right) $ for some $\alpha
\in \left( 0,1\right) .$ Moreover for any balls $B_{1}\subset \subset
B_{2}\subset \subset \Omega ,$ $\left\Vert u\right\Vert _{C^{1,\alpha
}\left( \overline{B_{1}}\right) }$ is bounded by a constant which depends
only on $N,p,B_{1},B_{2},$ $\left\Vert g\right\Vert _{L^{N+\varepsilon
}\left( B_{2}\right) },$ and the norm $\left\Vert u\right\Vert
_{W^{1,p}\left( B_{2}\right) }.$
\end{theorem}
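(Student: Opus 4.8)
The plan is a bootstrap on the local integrability of the right-hand side $f:=-H(x,u,\nabla u)$, in which the strict inequality $q<\tilde q$ forces a genuine gain of integrability at each step; once $f\in L^{m}_{loc}(\Omega)$ for some $m>N$, the classical interior $C^{1,\alpha}$ theory for the $p$-Laplacian finishes the argument. First I note that, since $u\in W^{1,p}_{loc}(\Omega)$, we have $|\nabla u|^{p}\in L^{1}_{loc}(\Omega)$, hence $|\nabla u|^{q}\in L^{p/q}_{loc}(\Omega)$; combined with $g\in L^{N+\varepsilon}_{loc}(\Omega)$ and (\ref{lap}) this gives $f\in L^{r_{0}}_{loc}(\Omega)$ with $r_{0}=\min(N+\varepsilon,\,p/q)$, and $u$ solves $-\Delta_{p}u=f$ in $\mathcal D'(\Omega)$. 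Writing $\overline{m}=p/\tilde q$ as in Lemma~\ref{lem}, the assumption $q<\tilde q$ says precisely $p/q>\overline{m}$, and since also $\overline{m}<N<N+\varepsilon$ we get $r_{0}>\overline{m}$.

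Now I iterate. Suppose $f\in L^{r_{k}}_{loc}(\Omega)$ with $\overline{m}<r_{k}<N$. Then Lemma~\ref{lem} applies to $u$ and yields $|\nabla u|^{p-1}\in L^{r_{k}^{*}}_{loc}(\Omega)$, $r_{k}^{*}=Nr_{k}/(N-r_{k})$, together with the attendant bound on nested balls; hence $|\nabla u|^{q}\in L^{r_{k}^{*}(p-1)/q}_{loc}(\Omega)$ and, by (\ref{lap}) again, $f\in L^{r_{k+1}}_{loc}(\Omega)$ with $r_{k+1}=\min\!\bigl(N+\varepsilon,\,r_{k}^{*}(p-1)/q\bigr)$. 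The arithmetic heart of the proof is that, before truncation,
\[
\frac{r_{k}^{*}(p-1)/q}{r_{k}}=\frac{N(p-1)}{q(N-r_{k})}>1\quad\Longleftrightarrow\quad r_{k}>N\,\frac{q-p+1}{q}=:r_{*},
\]
where $r_{*}\le 0$ when $q\le p-1$, while for $q>p-1$ the threshold satisfies $r_{*}<N$ and $r_{*}<p/q$ — the latter being equivalent to $q<\tilde q$ (recall $N\tilde q=N(p-1)+p$) — so that in both cases $r_{0}=\min(N+\varepsilon,p/q)>r_{*}$. Since the displayed ratio is increasing in $r_{k}$, the sequence $(r_{k})$ stays above $r_{*}$ once $r_{0}$ does, and $r_{k+1}\ge\rho_{0}r_{k}$ with the fixed constant $\rho_{0}=N(p-1)/(q(N-r_{0}))>1$; hence after a number of steps depending only on $N,p,q,\varepsilon$ we reach an index with $r_{k}>N$, i.e. $f\in L^{m}_{loc}(\Omega)$ for some $m>N$. (If $r_{0}\ge N$ the iteration is unnecessary, or collapses to a single step when $r_{0}=N$ after first replacing $r_{0}$ by an exponent slightly below $N$.)

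It remains to invoke the interior regularity theory for the $p$-Laplacian: for $u\in W^{1,p}_{loc}(\Omega)$ with $-\Delta_{p}u=f\in L^{m}_{loc}(\Omega)$, $m>N$, one has $u\in C^{1,\alpha}(\Omega)$ for some $\alpha\in(0,1)$. For instance, letting $v$ solve $-\Delta v=f$ locally gives $v\in W^{2,m}_{loc}\hookrightarrow C^{1,1-N/m}_{loc}$ by classical Calderon-Zygmund estimates and Sobolev embedding, so that with $|F|^{p-2}F=\nabla v$ one has $-\Delta_{p}u=\mathrm{div}(|F|^{p-2}F)$ with $|F|^{p-2}F$ locally bounded and H\"{o}lder continuous, and the $C^{1,\alpha}$ regularity of DiBenedetto, Tolksdorf and Lieberman applies. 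The quantitative statement follows by composing the finitely many local estimates along a finite chain of nested balls $B_{1}\subset\subset\cdots\subset\subset B_{2}$: each application of Lemma~\ref{lem} bounds $\|\,|\nabla u|^{p-1}\|_{L^{r_{k+1}^{*}}}$ on the inner ball in terms of $N,p$, the two balls, $\|u\|_{W^{1,p}}$ and $\|f\|_{L^{r_{k}}}$ on the outer ball, while (\ref{lap}) bounds $\|f\|_{L^{r_{k}}}$ in terms of $\|g\|_{L^{N+\varepsilon}}$ and the previous bound on $\nabla u$; the concluding $C^{1,\alpha}$ estimate has the same dependence. This yields $\|u\|_{C^{1,\alpha}(\overline{B_{1}})}$ bounded by a constant depending only on $N,p,B_{1},B_{2},\|g\|_{L^{N+\varepsilon}(B_{2})}$ and $\|u\|_{W^{1,p}(B_{2})}$.

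The only genuinely delicate point is the termination of the exponent iteration, which is precisely where $q<\tilde q$ enters: at $q=\tilde q$ the displayed ratio equals $1$ and the scheme stalls. Everything else is routine bookkeeping of the shrinking balls and of the constants through the finitely many iterations, together with the black-box use of the standard $C^{1,\alpha}$ estimate for $-\Delta_{p}u=f$ with $f\in L^{m}_{loc}$, $m>N$.
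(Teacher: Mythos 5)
Your proof is correct and follows essentially the same route as the paper: a bootstrap of the integrability exponent of $f=-H(x,u,\nabla u)$ via Lemma \ref{lem}, with $q<\tilde q$ (equivalently $p/q>\overline m$ and $p/q$ above the fixed point $N(q-p+1)/q$) guaranteeing a strict gain at each step until the exponent exceeds $N$, followed by the standard $C^{1,\alpha}$ theory. Your termination argument (the uniform ratio $\rho_0>1$ and the explicit cap at $N+\varepsilon$ coming from $g$) is in fact slightly more careful than the paper's, which concludes by noting that an increasing sequence bounded by $N$ would have to converge to the fixed point, and which invokes Duzaar--Mingione to get $\nabla u\in L^\infty_{loc}$ rather than your divergence-form reduction; these are cosmetic differences.
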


\begin{proof}
Since $u\in W_{loc}^{1,p}\left( \Omega \right) ,$ the function $%
f=-H(x,u,\nabla u)$ satisfies $f$ $\in L_{loc}^{m_{0}}\left( \Omega \right) $
from (\ref{lap}), with $m_{0}=p/q>1.$ Notice that $q<\tilde{q}$ is
equivalent to $m_{0}>$ $\overline{m}.$ If $m_{0}>N,$ then from \cite[Theorem
1.2]{DuMi1}, $\left\vert \nabla u\right\vert \in L_{loc}^{\infty }\left(
\Omega \right) $ and we get an estimate of $\left\Vert \left\vert \nabla
u\right\vert \right\Vert _{L^{\infty }\left( B_{1}\right) }$ in terms of the
norm $\left\Vert u\right\Vert _{W^{1,p}\left( B_{2}\right) }$ and $%
\left\Vert g\right\Vert _{L^{N+\varepsilon }\left( B_{2}\right) }.$ Then $%
u\in C\left( \Omega \right) ,$ $f\in L_{loc}^{\infty }\left( \Omega \right) ,
$ hence $u\in C^{1,\alpha }\left( \Omega \right) $ for some $\alpha \in
\left( 0,1\right) ,$ see \cite{To}.$\medskip $

Next suppose that $m_{0}<N.$ Then from Lemma \ref{lem} we have $\left\vert
\nabla u\right\vert ^{p-1}\in L^{(p-1)m_{0}^{\ast }}(\Omega )$. In turn,
from (\ref{lap}), $f\in L_{loc}^{m_{1}}\left( \Omega \right) $ with $%
m_{1}=(p-1)m_{0}^{\ast }/q.$ Note that $m_{1}/m_{0}=N(p-1)/(qN-p)>1$ since $%
q<\tilde{q}.$ By induction, starting from $m_{1},$ as long as $m_{n}<N,$ we
can define $m_{n+1}=(p-1)m_{n-1}^{\ast }/q,$ and we find $m_{n}<m_{n+1}.$ If
$m_{n}<N$ for any $n,$ then the sequence converges to $\lambda =N(q-p+1)/q,$
which is impossible since $p/q<\lambda $ and $q<\tilde{q}.$ Then there
exists $n_{0}$ such that $m_{n_{0}}\geqq N.$ If $n_{0}=N,$ or if $m_{0}=N$
we modify a little $m_{0}$ in order to avoid the case. Then we conclude from
above.\medskip
\end{proof}

\begin{remark}
The result, which holds without any assumption on the sign of $H$, is sharp.
Indeed for $\tilde{q}<q<p<N,$ the problem $-\Delta _{p}u=\left\vert \nabla
u\right\vert ^{q}$ in $B(0,1)$ with $u=0$ on $\partial B(0,1)$ admits the
solution
\begin{equation*}
x\longmapsto u_{C}(x)=C(\left\vert x\right\vert ^{-\frac{p-q}{q+1-p}}-1),
\end{equation*}%
for suitable $C>0,$ and $u_{C}\in W_{0}^{1,p}\left( \Omega \right) $ for $%
\tilde{q}<q.$\medskip
\end{remark}

Next we consider the absorption case, and for simplicity the model problem:

\begin{theorem}
Let $p-1<q.$ Let $u$ be a nonnegative LR solution of
\begin{equation*}
-\Delta _{p}u+\left\vert \nabla u\right\vert ^{q}=0\hspace{0.5cm}\text{in }%
\Omega .
\end{equation*}%
Then $u\in L_{loc}^{\infty }\left( \Omega \right) \cap W_{loc}^{1,p}\left(
\Omega \right) ,$ and for any balls $B_{1}\subset \subset B_{2}\subset
\subset \Omega ,\left\Vert u\right\Vert _{L^{\infty }\left( B_{1}\right) }$
and $\left\Vert u\right\Vert _{W^{1,p}\left( B_{1}\right) }$ are controlled
by the norm $\left\Vert u\right\Vert _{L^{\ell }\left( B_{2}\right) }$ for
any $\ell \in (p-1,Q_{c}).$ As a consequence if $q\leqq p,$ then $u\in
C^{1,\alpha }\left( \Omega \right) $ for some $\alpha \in \left( 0,1\right)
. $ In particular $\left\Vert \left\vert \nabla u\right\vert \right\Vert
_{L^{\infty }\left( B_{1}\right) }$ is controlled by $\left\Vert
u\right\Vert _{L^{l}\left( B_{2}\right) }.$
\end{theorem}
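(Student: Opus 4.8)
The plan is to use that $\left\vert \nabla u\right\vert ^{q}$ is an absorption term, so that $-\Delta _{p}u=-\left\vert \nabla u\right\vert ^{q}\leqq 0$ and $u$ is a nonnegative $p$-subsolution; the result then follows in three steps: local boundedness, then $W_{loc}^{1,p}$ by a Caccioppoli estimate, then $C^{1,\alpha }$ when $q\leqq p$. First I would record that, by the definition of a LR-solution, $\left\vert u\right\vert ^{p-1}\in L_{loc}^{\sigma }(\Omega )$ for all $\sigma \in [1,N/(N-p))$, hence $u\in L_{loc}^{\ell }(\Omega )$ for all $\ell \in [p-1,Q_{c})$ --- this is why the norms $\left\Vert u\right\Vert _{L^{\ell }(B_{2})}$ in the statement make sense. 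Since $u\geqq 0$ and $-\Delta _{p}u\leqq 0$ in $\mathcal{D}^{\prime }(\Omega )$, $u$ is a nonnegative $p$-subsolution; for a LR-solution this is made rigorous either by running the De Giorgi iteration on the truncated test functions $(T_{k+m}(u)-k)^{+}\zeta ^{p}$ (admissible since only $T_{j}(u)\in W_{loc}^{1,p}(\Omega )$ is used) and letting $m\to \infty $, or by observing that $-u$ is a LR-solution of $-\Delta _{p}(-u)=\left\vert \nabla u\right\vert ^{q}\geqq 0$, which by \cite{KilKuTu} admits a $p$-superharmonic representative, locally bounded below. In either case $u\in L_{loc}^{\infty }(\Omega )$, and the local boundedness estimate for nonnegative $p$-subsolutions (see \cite{HeKilMa}) gives, for $B_{1}\subset \subset B_{2}\subset \subset \Omega $ and any $\ell >p-1$, $\left\Vert u\right\Vert _{L^{\infty }(B_{1})}\leqq C\left\Vert u\right\Vert _{L^{\ell }(B_{2})}$ with $C=C(N,p,\ell ,B_{1},B_{2})$.

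Next, since $u$ is now locally bounded it coincides with $T_{M}(u)$ near $B_{1}$ for $M>\left\Vert u\right\Vert _{L^{\infty }(B_{2})}$, so $u\in W_{loc}^{1,p}(\Omega )$ and $u$ is a bounded weak solution, testable against $W^{1,p}$-functions with compact support. Taking $\varphi =\zeta ^{p}u\geqq 0$ (with $\zeta \in \mathcal{D}^{+}(B_{2})$, $\zeta \equiv 1$ on $B_{1}$) and discarding the nonnegative term $\int_{\Omega }\zeta ^{p}u\left\vert \nabla u\right\vert ^{q}dx$, Young's inequality yields
\[
\int_{\Omega }\zeta ^{p}\left\vert \nabla u\right\vert ^{p}dx\leqq p\int_{\Omega }\zeta ^{p-1}u\left\vert \nabla u\right\vert ^{p-1}\left\vert \nabla \zeta \right\vert dx\leqq \frac{1}{2}\int_{\Omega }\zeta ^{p}\left\vert \nabla u\right\vert ^{p}dx+C\int_{\Omega }u^{p}\left\vert \nabla \zeta \right\vert ^{p}dx,
\]
so $\left\Vert \nabla u\right\Vert _{L^{p}(B_{1})}\leqq C\left\Vert u\right\Vert _{L^{\infty }(B_{2})}$; together with the first step this controls $\left\Vert u\right\Vert _{W^{1,p}(B_{1})}$ by $\left\Vert u\right\Vert _{L^{\ell }(B_{2})}$.

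Finally, let $q\leqq p$. Then $\left\vert \nabla u\right\vert ^{q}\leqq 1+\left\vert \nabla u\right\vert ^{p}$, so $u$ is a bounded weak solution of an equation with at most natural gradient growth. For $q<\tilde{q}$, Theorem \ref{pre} already gives $u\in C_{loc}^{1,\alpha }(\Omega )$ with the claimed quantitative dependence; for $\tilde{q}\leqq q\leqq p$ the Calder\'{o}n--Zygmund bootstrap of Theorem \ref{pre} stalls, and one must invoke the regularity theory for bounded weak solutions of quasilinear equations with natural gradient growth to get $\left\vert \nabla u\right\vert \in L_{loc}^{\infty }(\Omega )$ (with $\left\Vert \nabla u\right\Vert _{L^{\infty }(B_{1})}$ controlled by $\left\Vert u\right\Vert _{W^{1,p}(B_{2})}$); then $f=-\left\vert \nabla u\right\vert ^{q}\in L_{loc}^{\infty }(\Omega )$ together with $-\Delta _{p}u=f$ gives $u\in C_{loc}^{1,\alpha }(\Omega )$ by \cite{To}. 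In the model case $q=p$ this becomes elementary: the change of unknown $v=1-e^{-u/(p-1)}$ turns the equation into $-\Delta _{p}v=0$, hence $v\in C_{loc}^{1,\alpha }(\Omega )$; since $u$ is bounded above, $1-v$ stays bounded away from $0$, so $u=-(p-1)\log (1-v)$ and $\nabla u=(p-1)e^{u/(p-1)}\nabla v$ inherit the regularity and boundedness of $v$ and $\nabla v$. Chaining all the estimates gives in particular $\left\Vert \nabla u\right\Vert _{L^{\infty }(B_{1})}\leqq C\left\Vert u\right\Vert _{L^{\ell }(B_{2})}$. The main obstacle is precisely the range $\tilde{q}\leqq q\leqq p$ (notably $q=p$): there $f$ is only $L_{loc}^{1}$ a priori, so no elliptic bootstrap on the equation for $u$ alone produces continuity of $\nabla u$, and one genuinely has to exploit the boundedness of $u$ --- through the natural-growth regularity theory, or through the exponential change of unknown in the model case; the justification of the De Giorgi iteration and of the test-function identities for a LR-solution is a minor technical point, handled by truncating and passing to the limit.
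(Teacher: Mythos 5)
Your proposal is correct and follows essentially the same route as the paper: the subsolution property $-\Delta_p u\leqq 0$ gives $u\in L^{\infty}_{loc}$ together with the sup-estimate in terms of $\Vert u\Vert_{L^{\ell}(B_2)}$ (the paper quotes the weak Harnack inequality and \cite{KilKuTu}), local boundedness identifies $u$ with a truncate so $u\in W^{1,p}_{loc}$, the Caccioppoli estimate with test function $u\zeta^{p}$ controls $\Vert\nabla u\Vert_{L^{p}(B_1)}$, and for $q\leqq p$ the $C^{1,\alpha}$ regularity is obtained from \cite{To}. Your extra discussion of the range $\tilde q\leqq q\leqq p$ (natural-growth regularity, the exponential substitution for $q=p$) is a more detailed unpacking of the single citation of Tolksdorf that the paper uses, since bounded weak $W^{1,p}$ solutions with a lower-order term of growth at most $|\xi|^{p}$ fall directly under that classical theory.
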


\begin{proof}
Since $-\Delta _{p}u\leqq 0$ in $\Omega ,$ then $u\in L_{loc}^{\infty
}\left( \Omega \right) $ from \cite{KilKuTu}, and $u$ satisfies a weak
Harnack inequality: for almost any $x_{0}$ such that $B(x_{0},3\rho )\subset
\Omega ,$ and any $\ell \in (p-1,Q_{c}),$%
\begin{equation}
\sup_{B(x_{0},\rho )}u\leq C\left( \oint_{B(x_{0},2\rho )}u^{\ell }\right) ^{%
\frac{1}{\ell }},  \label{uh}
\end{equation}%
with $C=C(N,p,\ell ).$ Then in $B(x_{0},\rho ),$  $u=T_{k}(u)$ for some $k>0,
$  thus $u\in W_{loc}^{1,p}\left( \Omega \right) .$ For any $\xi \in
\mathcal{D}\left( \Omega \right) ,$ taking $u\xi ^{p}$ as a test function,
we get
\begin{eqnarray*}
\int_{\Omega }\left\vert \nabla u\right\vert ^{p}\xi ^{p}dx+\int_{\Omega
}\left\vert \nabla u\right\vert ^{q}u\xi ^{p}dx &=&-p\int_{\Omega }\xi
^{p-1}u\left\vert \nabla u\right\vert ^{p-2}\nabla u.\nabla \xi dx \\
&\leqq &\frac{1}{2}\int_{\Omega }\left\vert \nabla u\right\vert ^{p}\xi
^{p}dx+C_{p}\int_{\Omega }u^{p}\left\vert \nabla \xi \right\vert ^{p}dx.
\end{eqnarray*}%
Then for any balls $B_{1}\subset \subset B_{2}\subset \subset \Omega ,$ we
obtain that $\left\Vert \left\vert \nabla u\right\vert \right\Vert
_{L^{p}\left( B_{1}\right) }$ is bounded by a constant which depends only on
$N,p,B$ and $\left\Vert u\right\Vert _{L^{\ell }\left( B_{2}\right) }.$ If $%
q\leqq p,$ we deduce that $u\in C^{1,\alpha }\left( \Omega \right) $ and
estimates of $\left\vert \nabla u\right\vert \in L_{loc}^{\infty }\left(
\Omega \right) $ from the classical results of \cite{To}.
\end{proof}

\begin{acknowledgement}
The first and second author were supported by Fondecyt 1110268. The second
author was also by MECESUP 0711 and CNRS UMR 7350. The third author was
partially supported by Fondecyt 1110003.
\end{acknowledgement}

\end{document}